\def\@cite#1#2{[{{\bfseries #1}\if@tempswa , #2\fi}]}
\renewcommand{\section}{%
\@startsection{section}{1}{\z@}
{0.5truecm plus -1ex minus -.2ex}%
{1.0ex plus .2ex}{\bfseries\large}}
\def\@seccntformat#1{\csname the#1\endcsname.\ }
\numberwithin{equation}{section} 
\theoremstyle{theorem}
\newtheorem{thm}{Theorem}[section]
\newtheorem{lem}[thm]{Lemma}
\theoremstyle{definition}
\newtheorem{df}{Definition}[section]
\newtheorem{remark}{Remark}[section]
\newtheorem*{prth1.1}{Proof of Theorem 1.1}
\newtheorem*{prth1.2}{Proof of Theorem 1.2}
\newcommand{\ep}{\varepsilon}
\newcommand{\pa}{\partial}
\newcommand{\R}{\mathbb{R}}
\newcommand{\N}{\mathbb{N}}
\newcommand{\cl}[1]{{\overline#1}}
\newcommand{\Ombar}{\cl{\Omega}}
\newcommand{\Tmaxe}{T_{{\rm max}, \ep}}
\newcommand{\ue}{u_{\ep}}
\newcommand{\uie}{u_{0\ep}}
\newcommand{\ve}{v_{\ep}}
\newcommand{\vie}{v_{0\ep}}
\newcommand{\uej}{u_{\ep_j}}
\newcommand{\vej}{v_{\ep_j}}
\newcommand{\io}{\int_{\Omega}}
\newcommand{\upto}{\nearrow}
\newcommand{\dwto}{\searrow}
\newcommand{\embd}{\hookrightarrow}
\newcommand{\ddt}{\frac{\rm d}{{\rm d}t}}
\newcommand{\intd}[1]{{\rm d}#1}
\newcommand{\wc}{\rightharpoonup}
\newcommand{\wsc}{\stackrel{\star}{\rightharpoonup}}
\newcommand{\three}{I\hspace{-1.2pt}I\hspace{-1.2pt}I}
\newcommand{\four}{I\hspace{-1.2pt}V}
\newcommand{\baru}{\overline{u_0}}
\newcommand{\psf}{\textsf{p}}
\newcommand{\qsf}{\textsf{q}}
\newcommand{\rsf}{\textsf{r}}
\newcommand{\ssf}{\textsf{s}}
\DeclareMathOperator{\sgn}{sgn}
\DeclareBoldMathCommand{\bmass}{1}
\begin{document}
\footnote[0]
    {2020{\it Mathematics Subject Classification}\/. 
    Primary: 35B45; Secondary: 35Q92, 35D30, 92C17.
    }
\footnote[0]
    {{\it Key words and phrases}\/: 
    chemotaxis; flux limitation; weak solutions; nonlinear production.}
\begin{center}
    \Large{{\bf Global boundedness of weak solutions to\\ a flux-limited Keller--Segel system\\ with superlinear production}}
\end{center}
\vspace{5pt}
\begin{center}
    Shohei Kohatsu
   \footnote[0]{
    E-mail: 
    {\tt sho.kht.jp@gmail.com}
    }\\
    \vspace{12pt}
    Department of Mathematics, 
    Tokyo University of Science\\
    1-3, Kagurazaka, Shinjuku-ku, 
    Tokyo 162-8601, Japan\\
    \vspace{2pt}
\end{center}
\begin{center}    
    \small \today
\end{center}

\vspace{2pt}
\newenvironment{summary}
{\vspace{.5\baselineskip}\begin{list}{}{%
     \setlength{\baselineskip}{0.85\baselineskip}
     \setlength{\topsep}{0pt}
     \setlength{\leftmargin}{12mm}
     \setlength{\rightmargin}{12mm}
     \setlength{\listparindent}{0mm}
     \setlength{\itemindent}{\listparindent}
     \setlength{\parsep}{0pt}
     \item\relax}}{\end{list}\vspace{.5\baselineskip}}
\begin{summary}
{\footnotesize {\bf Abstract.}
The flux-limited Keller--Segel system
\begin{align*}
    \begin{cases}
    u_t = \Delta u - \chi \nabla \cdot (u|\nabla v|^{p-2}\nabla v),
    \\
    v_t = \Delta v - v + u^{\theta}
    \end{cases}
\end{align*}
is considered
under homogeneous Neumann boundary conditions
in a bounded domain
$\Omega \subset \mathbb{R}^n$ $(n \in \mathbb{N})$.
In the case that $\theta \le 1$,
existence of global bounded weak solutions was
established in \cite{KoP}.
The purpose of this paper
is to prove that global bounded weak solutions
can also be constructed in the case $\theta > 1$ with
a smallness condition on $p$.
} 
\end{summary}
\vspace{10pt}

\newpage
\section{Introduction}\label{Sec:Intro}

This study is concerned with the
flux-limited Keller--Segel system with superlinear
production,
\begin{align}\label{Sys:Main}
  \begin{cases}
    u_t=\Delta u - \chi \nabla \cdot (u |\nabla v|^{p-2} \nabla v)
    & \quad\mbox{in}\ \Omega \times (0, \infty),
  \\
    v_t = \Delta v - v + u^{\theta}
    & \quad\mbox{in}\ \Omega \times (0, \infty),
  \\
    \nabla u \cdot \nu=\nabla v \cdot \nu=0
    & \quad\mbox{on}\ \pa\Omega \times (0, \infty),
  \\
    u(\cdot ,0) = u_0, \ v(\cdot, 0) = v_0
    & \quad\mbox{in}\ \Omega,
  \end{cases}
\end{align}
where $\Omega$ is a $C^{2+\delta}$ bounded domain
in $\R^n$ $(n\in\N)$ for some $\delta \in (0,1)$,
where $\chi > 0$, $p > 1$ and $\theta > 1$ are constants,
where $\nu$ is the outward normal vector to $\pa\Omega$,
and where $u_0$ and $v_0$ are given nonnegative functions.
Chemotaxis systems with gradient-dependent flux limitation have been
proposed in some biomathematical literature
(see e.g.\ \cite{BBNS-2010, PY-2018}) as a generalization of
the minimal Keller--Segel system (\cite{KS-1970}),
which describes the chemotactic movement of cells (with density $u$) toward
a higher concentration ($v$) of the chemical substance produced
by themselves.

As in the classical Keller--Segel systems,
the question whether solutions of \eqref{Sys:Main} blow up
has received great interest.
When $\theta = 1$, the value $p = \frac{n}{n-1}$ has been found
to play a critical exponent in the sense that solutions can exist globally
and remain bounded if $p < \frac{n}{n-1}$
(\cite{YL-2020}), whereas there exist radially symmetric solutions
of the parabolic--elliptic variant
which blow up at finite time when $p > \frac{n}{n-1}$
(\cite{T-2022, Ko1}).
For related studies on global existence
and blow-up, we refer to \cite{W-2022-PP, Z-2023, NT-2018, MVY-2023}
and the references therein.

Compared to the flux-limited Keller--Segel system,
global existence and finite-time blow-up of solutions to the
chemotaxis system
with nonlinear production,
\begin{align}\label{Sys:np}
    \begin{cases}
        u_t = \Delta u - \nabla \cdot(u \nabla v)
        + f(u),
        & \quad x \in \Omega, \ t > 0,
    \\
        \tau v_t = \Delta v - v + g(u),
        & \quad x \in \Omega, \ t > 0
    \end{cases}
\end{align}
with $g(u) \simeq u^{\theta}$,
have been much more extensively studied.
For instance, when
$f \equiv 0$,
solutions exist globally and are bounded, provided that
$\theta < \frac{2}{n}$
(\cite{LT-2016}), whereas if $\Omega$ is a ball
and $\theta > \frac{2}{n}$, then some radial solutions to
a simplified parabolic--elliptic system blow up in finite time
(\cite{W-2018}).
A similar blow-up result in the quasilinear case is also available in
\cite{L-2019-JMAA}.
We note that the boundedness result in \cite{LT-2016} involves both
the sublinear case ($\theta < 1$) and the superlinear case ($\theta > 1$)
if $n=1$,
which the proofs rely on different arguments: If $\theta < 1$, then
we know from the mass conservation $\io u = \io u_0$ that
$\|g(u)\|_{L^{\frac{1}{\theta}}(\Omega)}$ is uniformly bounded,
whence we also obtain a uniform bound for $\|\nabla v\|_{L^r(\Omega)}$
with some $r > 1$, which benefits boundedness of solutions.
On the other hand, in the case of superlinear production,
$L^1$ estimates for $u$ are not sufficient to achieve the bounds for
$\|g(u)\|_{L^{1}(\Omega)}$.
In the proof, they derived the inequality
\begin{align}\label{Intro:quantity}
    \ddt\left[\io u^{\theta}+\frac{1}{2}\io |\nabla v|^2\right]
    + \io u^{\theta}+\frac{1}{2}\io |\nabla v|^2 \le C,
\end{align}
to ensure that $\|g(u)\|_{L^{1}(\Omega)}$ is indeed uniformly bounded.
After all, when $\theta > 1$,
the coupled quantities likely in \eqref{Intro:quantity} instead of
$L^1$ boundedness of solutions
often play a crucial role
in the analysis of \eqref{Sys:np}.

Similar boundedness results in modified systems
are available up to now,
and they also include the case of superlinear production.
For example, when
$f(u) \simeq u-\mu u^{\kappa}$,
it was shown in \cite{NO-2018} that solutions of \eqref{Sys:np}
globally exist and bounded under the condition that
$\theta \le \max\{\frac{\kappa}{2},2\}$
and $\theta < \frac{n+2}{2n}(\kappa-1)$,
and later on, this has been extended to
$\theta < \kappa -1$, or $\theta = \kappa-1$ with sufficiently large
$\mu$ in \cite{ZWZ-2019}.
In these results,
they rely on the effect of logistic-type dampening,
which may provide some estimates for $u$ in $L^{\kappa}(\Omega)$ spaces
with $\kappa > 1$,
to overcome
superlinearity of production, and also maximal regularity properties
would play an important role.
We refer to
\cite{V-2021, FV-2021, L-2020, DL-2025}
for related works in various directions.

Returning to the flux-limited Keller--Segel system \eqref{Sys:Main},
we are interested in whether solutions
globally exist and remain bounded when $\theta \neq 1$.
In the previous study (\cite{KoP}), we established global boundedness of
weak solutions to \eqref{Sys:Main} provided that
\begin{align}\label{Intro_KoP}
    \begin{cases}
        p \in (1, \infty)
        & \mbox{if}\ 0 < \theta \le \dfrac{1}{n},
    \\
        p \in \left(1, \dfrac{n\theta}{n\theta-1}\right)
        & \mbox{if}\ \dfrac{1}{n} < \theta \le 1
    \end{cases}
\end{align}
holds.
As a related but important work,
the 2D Keller--Segel--Navier--Stokes system,
\[
    \begin{cases}
        \rho_t + u\cdot \nabla \rho = \Delta \rho
        - \nabla\cdot (\rho f(|\nabla c|^2) \nabla c)
        &\quad\mbox{in}\ \Omega\times (0,\infty),
        \\
        c_t + u\cdot\nabla c = \Delta c - c + g(\rho)
        &\quad\mbox{in}\ \Omega\times (0,\infty),
        \\
        u_t + (u\cdot\nabla)u + \nabla P = \Delta u + \rho \nabla \Phi,
        \quad u = 0
        &\quad\mbox{in}\ \Omega\times (0,\infty)
    \end{cases}
\]
with $f(|\nabla c|^2) \lesssim (1+ |\nabla c|^2)^{-\frac{\alpha}{2}}$ and
$g(\rho) \lesssim \rho(1+\rho)^{\beta - 1}$, was considered in
\cite{W-2023},
where they obtained global boundedness of solutions under the condition
that
\[
    \begin{cases}
        \alpha > (2\beta - 1)_{+} - 1
        &\mbox{if}\ 0 < \beta \le 1,
        \\
        \alpha > 1 - \dfrac{1}{2\beta - 1}
        &\mbox{if}\ \beta > 1.
    \end{cases}
\]
This seems to be the first result on global existence in flux-limited
chemotaxis systems with superlinear production.
A similar result on global existence in a related system was obtained
in \cite{JRT-2023, LP-2024}.

However, to the best of our knowledge,
global solvability and boundedness in the system
\eqref{Sys:Main} with superlinear production ($\theta > 1$)
in domains of arbitrary dimension are still left as an open problem.
The purpose of this paper is to establish global existence and
boundedness of solutions to \eqref{Sys:Main} with $\theta > 1$ and
without any restriction on dimensional $n \in \N$.

Before we state the result,
we shall first introduce the solution concept
which will be pursued in the sequel.

%
%
\begin{df}\label{df1}
Let $u_0, v_0 \in L^1(\Omega)$.
A pair $(u,v)$ of functions
\[
        u, v \in L^1_{\rm loc}([0, \infty); W^{1,1}(\Omega))
\]
is called a \emph{global weak solution of \eqref{Sys:Main}} if
\begin{enumerate}[{\rm (i)}]
\item\label{df1-1}
$u \ge 0$ and
$v \ge 0 \ \mbox{a.e.}\ \mbox{on}\ \Omega \times (0,\infty)$,
\item\label{df1-2}
$u^{\theta} \in L^1_{\rm loc}([0, \infty); L^1(\Omega))$ and
$u|\nabla v|^{p-2}\nabla v \in L^1_{\rm loc}(
\Ombar \times [0, \infty); \R^n)$,
\item\label{df1-3}
for any
$\varphi \in C_{\rm c}^{\infty}(\Ombar \times [0, \infty))$,
\begin{align*}
    &\int_0^{\infty}\io u \varphi_t + \io u_0 \varphi(\cdot, 0)
    = \int_0^{\infty}\io \nabla u \cdot \nabla \varphi
    - \chi \int_0^{\infty}\io
      u|\nabla v|^{p-2}\nabla v \cdot \nabla\varphi
\\ \intertext{and}
    &\int_0^{\infty} \io v\varphi_t + \io v_0 \varphi(\cdot, 0)
    = \int_0^{\infty}\io \nabla v \cdot \nabla\varphi
    + \int_0^{\infty}\io v\varphi
    - \int_0^{\infty}\io u^{\theta}\varphi
\end{align*}
are valid.
\end{enumerate}
\end{df}
%

Our main result reads as follows.

%
%
\begin{thm}\label{Thm:Main}
Let $n \in \N$ and $\theta>1$,
and assume that $p > 1$ satisfies
\begin{align}\label{Main_Assumption}
    p\in \left(1, \dfrac{n\theta}{n\theta-1}\right).
\end{align}
Then, letting
\begin{align}\label{Main:s}
    \begin{cases}
        s= \infty
        & \mbox{if}\ n=1\ \mbox{and}\ 
        p\ge \min\left\{2, \dfrac{2\theta+1}{2\theta-1}\right\},
    \\
        s \neq \infty,\ s > \max\{n, (n+2)(p-1)\}\ \mbox{and}\ 
        s \ge 2
        & \mbox{otherwise},
    \end{cases}
\end{align}
given any initial data $u_0$ and $v_0$ such that
\begin{align}\label{Main_Initial}
    \begin{cases}
        u_0 \in L^{\infty}(\Omega) \setminus \{0\},
        \quad u_0 \ge 0\ \mbox{a.e.}\ \mbox{on}\ \Omega,\\
        v_0 \in W^{1,s}(\Omega),\quad
        v_0 \ge 0\ \mbox{on}\ \Ombar,
    \end{cases}
\end{align}
one can find a global weak solution $(u,v)$ of \eqref{Sys:Main}
with the properties that
\begin{align}\label{Main_Regularity}
    \begin{cases}
        u \in L^{\infty}(0, \infty; L^{\infty}(\Omega))
        \cap L^2_{\rm loc}([0, \infty); W^{1,2}(\Omega))
        \cap C^0_{{\rm w}-\ast}([0, \infty); L^{\infty}(\Omega)),
        \\
        v \in L^{\infty}(0, \infty; W^{1,s}(\Omega))
        \cap C^0_{{\rm w}-\ast}([0, \infty); L^{\infty}(\Omega)),
    \end{cases}
\end{align}
and that
\begin{align}\label{Main_Boundedness}
    \sup_{t > 0}\,
      (\|u(t)\|_{L^{\infty}(\Omega)} + \|v(t)\|_{L^{\infty}(\Omega)}) < \infty.
\end{align}
\end{thm}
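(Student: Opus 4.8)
The plan is to obtain $(u,v)$ as a limit of classical solutions of regularized problems, the crux being a priori bounds uniform in the regularization parameter.

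\textbf{Step 1 (Approximation).}
For $\ep\in(0,1)$ I would replace the singular flux $|\nabla v|^{p-2}\nabla v$ by a smooth family $F_\ep$ removing the degeneracy/singularity at $\nabla v=0$ (e.g.\ $F_\ep(\xi)=(\ep+|\xi|^2)^{(p-2)/2}\xi$), regularize the production $u^\theta$ if convenient, and approximate the data by $\uie\in C^\infty(\Ombar)$, $\vie\in C^\infty(\Ombar)$ with $\uie\to u_0$ in $L^\infty(\Omega)$, $\vie\to v_0$ in $W^{1,s}(\Omega)$, $\uie\ge0$, $\vie\ge0$, $\io\uie=\io u_0$, and $\uie,\vie$ enjoying the bounds of $u_0,v_0$ in \eqref{Main_Initial}. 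Standard parabolic theory then gives, for each $\ep$, a nonnegative classical solution $(\ue,\ve)$ on a maximal interval $[0,\Tmaxe)$ with the usual extensibility criterion. All the work lies in deriving, on $[0,\Tmaxe)$, bounds independent of $\ep$ (and of $\Tmaxe$), which in particular will force $\Tmaxe=\infty$.

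\textbf{Step 2 (Coupled energy estimate --- the main step).}
Mass is conserved, $\io\ue(\cdot,t)=\io u_0$, but since $\theta>1$ this alone controls neither $\|\ue^\theta\|_{L^1(\Omega)}$ nor $\nabla\ve$. Following the philosophy recalled around \eqref{Intro:quantity}, I would instead test the first equation by $\ue^{q-1}$ and differentiate $\io|\nabla\ve|^{2j}$ along the second equation, for parameters $q$ large and $j$ tied to $q,\theta,p$, to get
\[
  \ddt\io\ue^q+c_1\io|\nabla\ue^{q/2}|^2\le C\io\ue^q|\nabla\ve|^{2(p-1)},
\]
\[
  \ddt\io|\nabla\ve|^{2j}+c_2\io|\nabla\ve|^{2j-2}|D^2\ve|^2+c_3\io|\nabla\ve|^{2j}\le C\io\ue^{2\theta}|\nabla\ve|^{2j-2}+C\int_{\pa\Omega}|\nabla\ve|^{2j-2}\pa_\nu|\nabla\ve|^2,
\]
the boundary term being absorbed via the pointwise estimate for $\pa_\nu|\nabla\ve|^2$ on $\pa\Omega$ and a trace interpolation inequality. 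The two cross terms are then handled by Hölder's inequality followed by the Gagliardo--Nirenberg inequality, exploiting the $L^1$ mass bound on $\ue$ and the $W^{1,2}$--$L^{2j}$ Gagliardo--Nirenberg inequality for $|\nabla\ve|^j$, so that each is dominated by small multiples of the two dissipation terms plus $C\bigl(1+(\io\ue^q)^{\alpha_1}+(\io|\nabla\ve|^{2j})^{\alpha_2}\bigr)$ with $\alpha_1,\alpha_2<1$. It is exactly here that the smallness condition \eqref{Main_Assumption}, $p<\frac{n\theta}{n\theta-1}$, enters: it is the threshold making the relevant Gagliardo--Nirenberg exponents strictly subcritical. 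Adding the two inequalities and applying an ODE comparison argument yields
\[
  \sup_{t\in(0,\Tmaxe)}\Bigl(\io\ue^q+\io|\nabla\ve|^{2j}\Bigr)\le C
\]
uniformly in $\ep$, for some large $q=q_0$.

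\textbf{Step 3 (Bootstrap and passage to the limit).}
From $\ue\in L^\infty((0,\Tmaxe);L^{q_0}(\Omega))$, smoothing estimates for the Neumann heat semigroup applied to the second equation improve the bound on $\nabla\ve$ to $L^\infty((0,\Tmaxe);L^r(\Omega))$ with larger $r$; plugging this into the variation-of-constants representation of the first equation and using $L^p$--$L^q$ semigroup estimates to control the drift $\ue F_\ep(\nabla\ve)$ improves the integrability of $\ue$. A finite Moser-type iteration --- again admissible precisely because of \eqref{Main_Assumption} --- yields $\sup_{t}\|\ue(\cdot,t)\|_{L^\infty(\Omega)}\le C$ uniformly in $\ep$, whence $\Tmaxe=\infty$; elliptic/parabolic regularity then gives $\sup_t\|\ve(\cdot,t)\|_{W^{1,s}(\Omega)}\le C$ (indeed $\nabla\ve$ bounded in $L^\infty$), testing the first equation by $\ue$ gives $\ue$ bounded in $L^2_{{\rm loc}}([0,\infty);W^{1,2}(\Omega))$, and the weak formulation gives $\pa_t\ue$ bounded in $L^2_{{\rm loc}}([0,\infty);(W^{1,2}(\Omega))^\star)$. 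By the Aubin--Lions--Simon lemma, along a subsequence $\uej\to u$ in $L^2_{{\rm loc}}([0,\infty);L^2(\Omega))$ and a.e., $\vej\to v$ and $\nabla\vej\to\nabla v$ a.e.; since $\ue^\theta$ and $\ue F_\ep(\nabla\ve)$ are uniformly bounded and converge a.e.\ to $u^\theta$ and $u|\nabla v|^{p-2}\nabla v$, dominated convergence lets me pass to the limit in every term of the identities in Definition~\ref{df1}, while the uniform bounds give \eqref{Main_Regularity} and \eqref{Main_Boundedness} (the weak-$\star$ continuity in time being routine).

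\textbf{Main obstacle.}
The delicate point is Step 2: choosing the exponents $q$ and $j$ and carrying out the Gagliardo--Nirenberg/Hölder bookkeeping so that \emph{both} cross terms $\io\ue^q|\nabla\ve|^{2(p-1)}$ and $\io\ue^{2\theta}|\nabla\ve|^{2j-2}$ become subcritical exactly under $p<\frac{n\theta}{n\theta-1}$, while simultaneously controlling the boundary contribution arising from $\io|\nabla\ve|^{2j}$ on a possibly non-convex $\Omega$. The exceptional case $s=\infty$ in \eqref{Main:s} presumably reflects that in one dimension, for suitably large $p$, the gradient regularity of $v$ can be pushed all the way to $L^\infty$, simplifying both the bootstrap and the limit passage.
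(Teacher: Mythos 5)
Your Step 2 contains the essential gap. You propose to close a coupled estimate for $\io\ue^q+\io|\nabla\ve|^{2j}$ using only the mass bound $\io\ue=\io u_0$ as the low-order anchor, claiming that Gagliardo--Nirenberg bookkeeping makes both cross terms subcritical exactly under \eqref{Main_Assumption}. But the Gagliardo--Nirenberg interpolation for the $\nabla\ve$-part has no a priori bounded anchor norm: since $\theta>1$, the mass bound controls neither $\io\ue^{\theta}$ nor, through the second equation, any $L^r$ norm of $\ve$ or $\nabla\ve$ ($r\ge 1$) -- this is precisely the obstruction the paper emphasizes. Concretely, testing the first equation by $\ue^{q-1}$ produces (after H\"older and GN in $\ue$ with the $L^1$ anchor) a term of the form $\|\nabla\ve\|_{L^r(\Omega)}^{2a^{\ast}(p-1)}$ with $a^{\ast}>1$, whose exponent may exceed $2$; to absorb it one must interpolate $\|\nabla\ve\|_{L^r}$ between $\|\Delta\ve\|_{L^2}$ and a norm of $\ve$ that is already known to be bounded. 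The paper supplies this missing ingredient by a separate preliminary step: an entropy-like functional $\sgn(q-1)\io\ue^q+C\io\ve^2$, with a carefully chosen $q$ that may be \emph{less than} $1$, yields a uniform bound on $\io\ve^2$ (Lemmas~\ref{Lem:ueLqFunc1}--\ref{Lem:veL2}); only then does the coupled functional $\io\ue^q+\io|\nabla\ve|^2$ close (Lemmas~\ref{Lem:ueLqFunc2}--\ref{Lem:vexL2}), with \eqref{Main_Assumption} entering through the existence of an exponent $q$ satisfying \eqref{ueLqFunc1:q} and \eqref{veL2Func:q} simultaneously, and through \eqref{vexLr:bStar}. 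Your proposal simply asserts that the exponent arithmetic works out under \eqref{Main_Assumption} without this intermediate $L^2(\ve)$ estimate, and as written the scheme does not close.

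A second, smaller but genuine omission concerns the case $n=1$ with $p\ge\min\{2,\frac{2\theta+1}{2\theta-1}\}$ (the $s=\infty$ branch of \eqref{Main:s}). The energy method only covers $p$ as in \eqref{veL2:p}, which for $n=1$ is strictly smaller than the full range \eqref{Main_Assumption}; the remaining window $p\in[\min\{2,\frac{2\theta+1}{2\theta-1}\},\frac{\theta}{\theta-1})$ is handled in the paper by a completely different fixed-point-type argument based on $L^{\psf}$--$L^{\qsf}$ Neumann heat semigroup estimates (Lemma~\ref{Lem:1D}), exploiting that $(\theta-\frac1r)(p-1)+(1-\frac1q)<1$. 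You acknowledge this case only with a "presumably", so it is not proved. (Also, in Step 3 the claim that $\nabla\ve$ becomes bounded in $L^{\infty}$ is not available for $n\ge2$ with merely $v_0\in W^{1,s}(\Omega)$, $s<\infty$; the paper only derives, and only needs, $L^{\infty}(0,\infty;L^s(\Omega))$ bounds for $\nabla\ve$ with $s>(n+2)(p-1)$ to run the Moser iteration.)
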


%
%
\begin{remark}
The conditions \eqref{Intro_KoP} and \eqref{Main_Assumption}
suggest that the number $p=\frac{n\theta}{n\theta - 1}$
would play a role of critical blow-up exponent in the system
\eqref{Sys:Main}.
Recently, in a related system, some blow-up phenomena were considered
in \cite{MC-2024}; however, it has to be left as an open
whether there exists a solution which blows up
in the case that $p > \frac{n\theta}{n\theta - 1}$.
\end{remark}

%
%
\begin{remark}
Following \cite[Section 4]{JRT-2023}, one can construct a global solution
of \eqref{Sys:Main} with higher regularity than in \eqref{Main_Regularity},
namely, there exists $\alpha \in (0,1)$ such that
\[
    u \in C^{\alpha, \frac{\alpha}{2}}_{\rm loc}
    (\Ombar \times (0, \infty))
    \quad\mbox{and}\quad 
    v \in C^{\alpha, \frac{\alpha}{2}}
    (\Ombar \times [0, \infty))
    \cap C^{2+\alpha, 1+\frac{\alpha}{2}}_{\rm loc}
    (\Ombar \times (0, \infty)).
\]
In this situation, we can further obtain the boundedness property as
\[
    \sup_{t > 0}\,
    (\|u(\cdot, t)\|_{L^{\infty}(\Omega)} + \|v(\cdot, t)\|_{C^{\alpha}(\Ombar)})
    < \infty,
\]
noting that the assumption $s > n$ in \eqref{Main:s} ensures that
$W^{1,s}(\Omega) \embd C^{\alpha}(\Ombar)$.
\end{remark}

%
%
\begin{remark}
As in \cite{KoP} and \cite[Section 4]{KoY1},
we can further verify
stability or asymptotic stability of
constant equilibria $(\baru, \baru^{\theta})$,
provided that
$p$ is sufficiently large for the latter.
\end{remark}

\noindent
{\bf Main Ideas.}
As we already mentioned, the major difference between the sublinear case
and the superlinear case is whether uniform bounds for
$\|u^{\theta}\|_{L^1(\Omega)}$ would be available or not from the mass
conservation property $\io u = \io u_0$.
From the second equation of \eqref{Sys:Main} we see that
the lack of this boundedness information also implies that we cannot
rely on $L^r$ bounds for $v$ and $\nabla v$ with any $r \ge 1$
at the beginning.
To overcome this difficulty, we aim to ensure $L^2$ boundedness
of $\nabla v$ under the hypothesis in Theorem~\ref{Thm:Main}.
Here we note that we cannot derive the inequality
\eqref{Intro:quantity} as in \cite{LT-2016},
because in controlling the term $\ddt\io |\nabla v|^2$
we need to combine with the integral $\io u^{q-2}|\nabla u|^2$,
where we cannot choose $q = \theta$ in most cases
(see Lemma~\ref{Lem:vexL2Func}).
Accordingly, we perform our analysis of the functional
\begin{align}\label{Intro_Funcq2}
    \io u^q + \io |\nabla v|^2
\end{align}
with some $q > 1$,
and derive a differential inequality of the form
\[
    \ddt\left[\io u^q + \io |\nabla v|^2\right]
    + \dfrac{q(q-1)}{8}\io u^{q-2}|\nabla u|^2 + 2\io |\nabla v|^2
    + \frac{1}{2} \io |\Delta v|^2 \le C
\]
(Lemma~\ref{Lem:vexL2}),
which corresponds to \eqref{Intro:quantity}.
The keys to this derivation are twofold.
Firstly, we need to derive an $L^2$ estimate for $v$, which will be one of the
ingredient to control flux-limited chemotactic term in \eqref{Sys:Main}
during the proof of Lemma~\ref{Lem:vexL2}.
To this end, we again rely on a functional which has the form
\[
    \sgn(q-1) \io u^q + C \io v^2
\]
with suitable choice of $q > 0$ for each $n \in \N$
(Lemmas~\ref{Lem:ueLqFunc1} and \ref{Lem:veL2Func})
and a constant $C>0$, and this leads to
another differential inequality that ensures $L^2$ boundedness
for $v$ with restriction on $p$
(Lemma~\ref{Lem:veL2}).
Secondly, throughout the analysis on \eqref{Intro_Funcq2},
the suitable selection of parameter $q$ is also required in order
to control the norm
$\|\nabla v\|_{L^r(\Omega)}^{\kappa(q)}$
with some $\kappa(q) > 0$ and $r > 1$, which arises in
the computation on the basis of the term $\ddt \io u^q$
 (Lemmas~\ref{Lem:ueLqFunc2} and \ref{Lem:vexLr}).
These two steps are important to extend the boundedness result
to arbitrary dimensional settings, and as a by-product,
we also obtain $L^q$ estimates for $u$ with any $q > 1$
(Lemma~\ref{Lem:ueLq}).
We note that
similar functionals are also used in
\cite{WL-2024}.

The paper is organized as follows.
In Section~\ref{Sec:Pre} we introduce approximate problems, and
collect some inequalities.
Section~\ref{Sec:BddApp} is devoted to global boundedness of
approximate solutions.
Finally, in Section~\ref{Sec:Proof} we consider convergence of
approximate solutions and prove Theorem~\ref{Thm:Main}.

\section{Preliminaries}\label{Sec:Pre}

In this section we first introduce approximate problems
and state an existence result,
and then recall some inequalities which will be useful
in Section~\ref{Sec:BddApp}.
Throughout the sequel, we shall assume that
$u_0$ and $v_0$ fulfill \eqref{Main_Initial}.

\subsection{Local existence of approximate solutions}\label{Subsec:local}

We consider the approximate problems
\begin{align}\label{Sys:App}
  \begin{cases}
    (\ue)_t=\Delta \ue - \chi \nabla
    \cdot\big(\ue(|\nabla \ve|^{2}+\ep)^{\frac{p-2}{2}}\nabla\ve),
    & x \in \Omega, \ t > 0,
  \\
    (\ve)_t = \Delta \ve - \ve + \ue^{\theta},
    & x \in \Omega, \ t > 0,
  \\
    \nabla \ue \cdot \nu=\nabla \ve \cdot \nu=0,
    & x \in \pa\Omega, \ t > 0,
  \\
    \ue(x, 0) = \uie(x), \ \ve(x, 0) = \vie(x),
    & x \in \Omega,
  \end{cases}
\end{align}
for $\ep \in (0,1)$, where
$\uie,\vie$ are nonnegative functions with the following properties:
\begin{enumerate}[{\rm (I)}]
\item\label{Ass1}
$\uie \in W^{1,\infty}(\Omega) \setminus\{0\}$
for all $\ep \in (0,1)$,
$\uie\to u_0$ in $L^{\infty}(\Omega)$
as $\ep \dwto 0$,
and moreover,
for any $q \in [1, \infty]$ there exists $U_q > 0$ satisfying
\begin{align}
    \|\uie\|_{L^q(\Omega)} \le U_q \|u_0\|_{L^q(\Omega)}
    \label{bound:uie}
\end{align}
for all $\ep \in (0,1)$.
\item\label{Ass2}
If $s=\infty$ in \eqref{Main:s},
then $\vie = v_0$ for every $\ep \in (0,1)$.
\item\label{Ass3}
If $s < \infty$ in \eqref{Main:s}, then
$\vie\in C^{1+\delta}(\Ombar)$
as well as $\nabla\vie \cdot \nu = 0$ on $\pa\Omega$
for each $\ep\in (0,1)$,
$\vie \to v_0$ in $W^{1,s}(\Omega)$ as $\ep \dwto 0$,
and furthermore,
for each $r\in [1,s]$ there is $V_r > 0$ fulfilling
\begin{align}
    \|\vie\|_{W^{1,r}(\Omega)} \le V_r \|v_0\|_{W^{1,r}(\Omega)}
    \label{bound:vie}
\end{align}
for any $\ep \in (0,1)$.
\end{enumerate}
We note that such families of functions
$(\uie)_{\ep \in (0,1)}$ and $(\vie)_{\ep \in (0,1)}$
can be constructed by a straightforward mollifying procedure
(see e.g.\ \cite[Proposition 4.21 and Corollary 9.8]{B-2011}
and \cite[Section 2]{STY-2024}).

Let us begin with
a theory of local existence in \eqref{Sys:App}, which can be
proved by making use of
the Schauder fixed point theorem along with
standard parabolic theory.
This has been used in previous studies
on related problems (see e.g.\ \cite{DW-2019, ILV-2024}),
however, due to the lack of smoothness of the boundary
$\pa\Omega$
a small modification is required in our proof.

%
%
\begin{lem}\label{Lem:local_App}
For any $\ep \in (0,1)$, there exist
$\Tmaxe\in(0, \infty]$ and a
uniquely determined pair $(\ue,\ve)$ of functions
\begin{align}\label{local_App:reg}
    \begin{cases}
        \ue, \ve \in C^0(\Ombar \times [0, \Tmaxe))
          \cap C^{2,1}(\Ombar \times (0, \Tmaxe)),
          \quad\mbox{and moreover}, \\
        \ve \in \bigcap_{q>n} C^0([0, \Tmaxe); W^{1,q}(\Omega))
          \quad\mbox{if}\ s = \infty,\\
          \ve \in C^0([0, \Tmaxe); W^{1,s}(\Omega))
          \quad\hspace{9.5mm}\mbox{if}\ s < \infty,
    \end{cases}
\end{align}
such that $\ue > 0$ and $\ve > 0$ in
$\Ombar \times (0, \Tmaxe)$, that $(\ue, \ve)$ solves
\eqref{Sys:App} classically in $\Omega \times (0, \Tmaxe)$,
and that
\begin{align}\label{local_App:criterion}
    \mbox{if}\ \Tmaxe<\infty, \quad
    \mbox{then}\quad
    \limsup_{t\upto \Tmaxe} \left\{
      \|\ue(\cdot,t)\|_{L^{\infty}(\Omega)}
      + \|\ve(\cdot, t)\|_{
      W^{1,s}(\Omega)}\right\}
      = \infty.
\end{align}
Moreover,
\begin{align}\label{local_App:mass}
    \io \ue(\cdot,t) = \io \uie
    \quad\mbox{for all}\ t \in (0, \Tmaxe)
\end{align}
holds for every $\ep \in (0,1)$.
\end{lem}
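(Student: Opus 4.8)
Fix $\ep \in (0,1)$ once and for all and treat it as a mere parameter. Since $\ep > 0$, the flux function $\Psi(\xi):=(|\xi|^2+\ep)^{\frac{p-2}{2}}\xi$ belongs to $C^{\infty}(\R^n;\R^n)$ and obeys
\[
  |\Psi(\xi)| \le C_{\ep}(1+|\xi|^{p-1}), \qquad |\nabla_{\xi}\Psi(\xi)| \le C_{\ep}(1+|\xi|)^{(p-2)_{+}} \qquad (\xi\in\R^n);
\]
these elementary bounds are all that will be used. The plan is to apply the Schauder fixed point theorem on $X:=C^0(\Ombar\times[0,T])$ for a suitably small $T\in(0,1)$, taking as the iteration variable the \emph{first} component $u$, so that the gradient dependence in the cross-diffusion coefficient is produced by the parabolic smoothing of the second equation rather than imposed a priori. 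The only place where the mere $C^{2+\delta}$ regularity of $\pa\Omega$ (in place of $C^{\infty}$, as in \cite{DW-2019, ILV-2024}) intervenes is that every parabolic Hölder and Schauder estimate below is invoked with Hölder exponent $<\delta$; this is the announced small modification.

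\textbf{The fixed-point operator.} Given $R>0$, $T\in(0,1)$ and $\hat u$ in the closed bounded convex set $\mathcal{S}:=\{\hat u\in X:0\le\hat u\le R,\ \hat u(\cdot,0)=\uie\}$, we first solve the linear problem $v_t=\Delta v-v+\hat u^{\theta}$ under $\nabla v\cdot\nu=0$, $v(\cdot,0)=\vie$; by Neumann heat-semigroup estimates together with \ref{Ass2}--\ref{Ass3} this has a unique nonnegative solution $v\in C^0([0,T];W^{1,s}(\Omega))$ (and $v\in\bigcap_{q>n}C^0([0,T];W^{1,q}(\Omega))$ if $s=\infty$), and since $\|\hat u^{\theta}\|_{L^{\infty}}\le R^{\theta}$ one gets $\|\nabla v\|_{L^{\infty}(0,T;L^{s}(\Omega))}\le C\|\nabla\vie\|_{L^{s}(\Omega)}+CT^{1/2}R^{\theta}=:\rho(R,T)$. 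With this $v$ we then solve $u_t=\Delta u-\chi\nabla\cdot(u\,\Psi(\nabla v))$ under $\nabla u\cdot\nu=0$, $u(\cdot,0)=\uie$; because $|\Psi(\nabla v)|\le C_{\ep}(1+|\nabla v|^{p-1})$ and \eqref{Main:s} forces $s>\max\{n,(n+2)(p-1)\}$, the drift lies in $L^{q}(\Omega\times(0,T))$ for some $q>n+2$, so the standard parabolic theory for drift--diffusion equations with such a drift (together with $\|\nabla e^{\tau\Delta}w\|_{L^{\infty}}\le C(1+\tau^{-\frac12-\frac{n(p-1)}{2s}})\|w\|_{L^{s/(p-1)}}$) yields a unique nonnegative weak solution $u$ with
\[
  \|u(\cdot,t)\|_{L^{\infty}}\le\|\uie\|_{L^{\infty}}+C_{\ep}(1+\rho(R,T)^{p-1})\int_0^t(t-\sigma)^{-\mu}\|u(\cdot,\sigma)\|_{L^{\infty}}\,\intd{\sigma},\qquad\mu:=\tfrac12+\tfrac{n(p-1)}{2s}<1.
\]
A Gronwall inequality with weakly singular kernel then gives $\|u\|_{L^{\infty}(\Omega\times(0,T))}\le\eta(R,T)$ with $\limsup_{T\dwto 0}\eta(R,T)\le\|\uie\|_{L^{\infty}}$. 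Setting $\Phi(\hat u):=u$, choosing $R:=2U_{\infty}\|u_0\|_{L^{\infty}(\Omega)}$ (so $\|\uie\|_{L^{\infty}}\le R/2$ by \eqref{bound:uie}) and then $T=T(\ep)\in(0,1)$ so small that $\eta(R,T)\le R$, we obtain $\Phi(\mathcal{S})\subseteq\mathcal{S}$. (When $s=\infty$, so that $n=1$ and $\vie=v_0\in W^{1,\infty}(\Omega)$, the drift $\Psi(\nabla v)$ is in fact bounded and the argument only simplifies.)

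\textbf{Compactness, continuity, Schauder.} For $\hat u\in\mathcal{S}$ the drift $\Psi(\nabla v)$ is bounded in $L^{q}(\Omega\times(0,T))$, $q>n+2$, uniformly in $\hat u$, so interior-and-boundary parabolic Hölder regularity bounds $\Phi(\hat u)$ in $C^{\beta,\beta/2}(\Ombar\times[\tau,T])$ for every $\tau\in(0,T)$ and some $\beta\in(0,1)$; together with the uniform modulus of continuity of $u$ at $t=0$ (read off from the variation-of-constants formula and $u(\cdot,0)=\uie$), Arzelà--Ascoli shows $\Phi(\mathcal{S})$ is precompact in $X$. Continuity: if $\hat u_k\to\hat u$ in $X$ with $\hat u_k\in\mathcal{S}$, then $\hat u_k^{\theta}\to\hat u^{\theta}$ in $X$, hence $v_k\to v$ in $C^0([0,T];W^{1,s}(\Omega))$ by linearity and the semigroup estimates; passing to a subsequence $\nabla v_k\to\nabla v$ a.e., and since $(\nabla v_k)$ is bounded and convergent in $L^{s}$ with $|\Psi(\nabla v_k)|\le C_{\ep}(1+|\nabla v_k|^{p-1})$, a Vitali argument gives $\Psi(\nabla v_k)\to\Psi(\nabla v)$ in $L^{q}(\Omega\times(0,T))$ for each $q<s/(p-1)$, in particular for some $q>n+2$; testing the equation for $u_k-u$ then yields $u_k\to u$ in $X$ (first in $L^2$, then upgraded by the uniform Hölder bound), and as the limit does not depend on the subsequence, $\Phi$ is continuous. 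Schauder's theorem thus produces a fixed point $\ue$; with $\ve$ the associated solution of the $v$-problem, $(\ue,\ve)$ solves \eqref{Sys:App} on $\Ombar\times[0,T]$, and a parabolic Schauder bootstrap — legitimate since $\Psi\in C^{\infty}$ for $\ep>0$ — upgrades it to the regularity \eqref{local_App:reg} on $\Omega\times(0,T)$, with $\ue,\ve>0$ there by the strong maximum principle; integrating the first line of \eqref{Sys:App} over $\Omega$ against the Neumann condition gives \eqref{local_App:mass}.

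\textbf{Uniqueness and extensibility.} Two solutions on a common interval coincide: subtract the equations, test with the differences of the solutions, and apply Gronwall, the Lipschitz bound $|\nabla_{\xi}\Psi|\le C_{\ep}(1+|\xi|)^{(p-2)_{+}}$ being available because both solutions are bounded in the relevant norms; a connectedness argument propagates this to the whole interval. Defining $\Tmaxe$ as the supremum of existence times in the class \eqref{local_App:reg}, one obtains \eqref{local_App:criterion}: if $\Tmaxe<\infty$ while $\|\ue(\cdot,t)\|_{L^{\infty}(\Omega)}+\|\ve(\cdot,t)\|_{W^{1,s}(\Omega)}$ stayed bounded as $t\upto\Tmaxe$, then — noting that at any $t_0\in(0,\Tmaxe)$ the solution is smooth and that the local existence time produced above depends only on $\ep$ and on upper bounds for $\|\ue(\cdot,t_0)\|_{L^{\infty}(\Omega)}$ and $\|\ve(\cdot,t_0)\|_{W^{1,s}(\Omega)}$ — restarting the construction at a suitable $t_0$ near $\Tmaxe$ and gluing by uniqueness would continue the solution past $\Tmaxe$, a contradiction. \emph{The main obstacle} is exactly the gradient-dependent coefficient $\Psi(\nabla v)$ in the iteration: iterating on $v$ one could not pass to the limit in $\Psi(\nabla v_k)$ for lack of strong control on $\nabla v_k$; choosing $u$ as the fixed-point variable, the smoothing of the $v$-equation converts $C^0$-convergence of $\hat u_k$ into strong $L^s$-convergence of $\nabla v_k$, which is what makes the continuity of $\Phi$, and with it the whole Schauder scheme, go through. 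The remaining ingredients — parabolic Hölder theory for an $L^{q}$-drift diffusion equation with $q>n+2$, and Schauder estimates up to the $C^{2+\delta}$ boundary with exponent below $\delta$ — are routine.
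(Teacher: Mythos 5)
Your argument is correct and follows essentially the same route as the paper: the paper's proof is given by citation (Schauder fixed point plus standard parabolic theory as in Winkler's Lemma 2.1 of \cite{W-2019}, with Lieberman's gradient estimates, DiBenedetto's H\"older theory and LSU Schauder regularity handling the merely $C^{2+\delta}$ boundary), and your proposal simply writes out that cited scheme in detail, with the same treatment of uniqueness by testing, positivity by the strong maximum principle, mass conservation by integration, and the extensibility criterion by restarting and gluing. No substantive difference or gap to report.
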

\begin{proof}
For each $\ep \in (0,1)$,
existence of solutions $(\ue,\ve)$
in a weak sense
together with \eqref{local_App:criterion}
can be verified in the same way as in
\cite[Lemma 2.1]{W-2019},
however, during the proof, we apply
\cite[Theorem 1.1]{L-1987} instead
to achieve $L^s$ bounds for $\nabla \ve$ on
$\Ombar \times (0, T)$ with some small $T>0$,
and also refer to
\cite[Theorem~\three.1.3 and Remark~\three.1.1]{D-1993}
to ensure H\"{o}lder continuity of solutions.
Similarly as in \cite[Lemma 4.7]{Ko3},
the regularity \eqref{local_App:reg} follows from
parabolic regularity theory
(\cite[Theorem~\four.5.3]{LSU-1968}),
and thus $(\ue, \ve)$ is a classical solution of
\eqref{Sys:App} in $\Omega \times (0, \Tmaxe)$.
Moreover, uniqueness of classical solutions can be
obtained by a standard testing argument.
Furthermore, positivity of solutions follows from strong maximal
principle (cf.\ \cite[Proposition 52.7]{QS-2019}),
and finally the mass conservation property
\eqref{local_App:mass} immediately results by
integrating the first equation in \eqref{Sys:App}
over $\Omega$.
\end{proof}

%
%
\begin{remark}
If $n=1$,
then by virtue of \cite[Theorems 14.4, 14.6 and 15.5]{A-1993}
the solution $(\ue,\ve)$ from Lemma~\ref{Lem:local_App} satisfies
\[
    \ue, \ve \in \bigcap_{q>1}
    C^0([0, \Tmaxe); W^{1,q}(\Omega)),
\]
and fulfills \eqref{local_App:criterion} with replacing
$\|\ve(\cdot,t)\|_{W^{1,s}(\Omega)}$ by
$\|\ve(\cdot,t)\|_{L^{\infty}(\Omega)}$.
\end{remark}

\subsection{Various estimates}

During estimates for solutions,
we will rely on the following results from
the Gagliardo--Nirenberg interpolation inequality.

%
%
\begin{lem}\label{Lem:GN1}
Suppose that $\psf, \qsf, \rsf, \ssf$ satisfy
one of the following\/{\rm :}
\begin{enumerate}[{\rm (i)}]
\item\label{GN1-1}
$\psf \in (0, \infty]$, $\qsf\in (0, \psf]$,
$\rsf \in [1, \infty)$, $\ssf \in (0, \infty)$ and
\begin{align}\label{GN1-1:a}
    a := \frac{\frac{1}{\qsf} - \frac{1}{\psf}}
      {\frac{1}{\qsf} + \frac{1}{n} - \frac{1}{\rsf}} \in [0,1];
\end{align}
%
\item\label{GN1-2}
$\psf \in [1, \infty)$, $\qsf \in [1, \infty]$,
$\rsf \in [1, \infty]$ and $\ssf \in (0, \infty)$ such that
\eqref{GN1-1:a} holds.
\end{enumerate}
Then there exists a constant
$C = C(\Omega, n, \psf, \qsf, \rsf) > 0$ such that
\[
    \|f\|_{L^{\psf}(\Omega)}
    \le C \|\nabla f\|_{L^{\rsf}(\Omega)}^{a}
    \|f\|_{L^{\qsf}(\Omega)}^{1-a}
    + C \|f\|_{L^{\ssf}(\Omega)}
\]
for all $f \in W^{1, \rsf}(\Omega) \cap L^{\qsf}(\Omega)$.
\end{lem}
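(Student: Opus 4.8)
The plan is to reduce the asserted inequality to the classical Gagliardo--Nirenberg--Sobolev inequality on bounded domains and to pass to the general ranges of exponents by elementary interpolation. First, replacing $f$ by $|f|$ and using $|\nabla |f|| \le |\nabla f|$ a.e.\ on $\Omega$, one may assume $f \ge 0$. The ``base case'' is the setting in which $\psf, \qsf, \rsf \in [1,\infty]$ with $\psf < \infty$ and $a \in [0,1]$ given by \eqref{GN1-1:a}: this is the standard Gagliardo--Nirenberg inequality on a (say, $C^1$ or Lipschitz) bounded domain, which may simply be quoted from the literature. If one wishes to reproduce it, extend $f$ to $Ef \in W^{1,\rsf}(\R^n)$ with $\|Ef\|_{W^{1,\rsf}(\R^n)} \le C\|f\|_{W^{1,\rsf}(\Omega)}$ and $\|Ef\|_{L^{\mathsf t}(\R^n)} \le C\|f\|_{L^{\mathsf t}(\Omega)}$ for the relevant exponents $\mathsf t$, apply the full-space interpolation inequality $\|g\|_{L^{\psf}(\R^n)} \le C\|\nabla g\|_{L^{\rsf}(\R^n)}^{a}\|g\|_{L^{\qsf}(\R^n)}^{1-a}$ (which holds precisely because case (ii) keeps the configuration on the ``inequality'' side of the Brezis--Mironescu dichotomy), and then control $\|\nabla(Ef)\|_{L^{\rsf}(\R^n)}$ and $\|Ef\|_{L^{\qsf}(\R^n)}$ by $\|\nabla f\|_{L^{\rsf}(\Omega)}$, $\|f\|_{L^{\qsf}(\Omega)}$ and a lower-order quantity; the term $C\|f\|_{L^{\ssf}(\Omega)}$ is exactly what accounts for the extension seeing $f$ on all of $\Omega$, and it enters through a Poincar\'e/Ehrling-type estimate together with Young's inequality.

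The genuinely new content is case (i), where $\psf$ and $\qsf$ may be smaller than $1$ while $\rsf \ge 1$, so the gradient term never has to be transformed. Excluding the trivial situation $\psf = \qsf$, we may assume $\tfrac{1}{\qsf} - \tfrac{1}{\psf} > 0$, hence the denominator in \eqref{GN1-1:a} is positive. I would then use log-convexity of the map $\mathsf t \mapsto \|f\|_{L^{\mathsf t}(\Omega)}$, valid for all $\mathsf t \in (0,\infty)$. If $\qsf < 1 \le \psf < \infty$, write $\|f\|_{L^1(\Omega)} \le \|f\|_{L^{\qsf}(\Omega)}^{\beta}\|f\|_{L^{\psf}(\Omega)}^{1-\beta}$ with $1 = \tfrac{\beta}{\qsf} + \tfrac{1-\beta}{\psf}$, apply the base case with $\qsf$ replaced by $1$ to bound $\|f\|_{L^{\psf}(\Omega)}$ by $C\|\nabla f\|_{L^{\rsf}(\Omega)}^{\tilde a}\|f\|_{L^1(\Omega)}^{1-\tilde a} + C\|f\|_{L^{\ssf}(\Omega)}$, insert the interpolation estimate for $\|f\|_{L^1(\Omega)}$, and absorb the resulting power $\|f\|_{L^{\psf}(\Omega)}^{(1-\beta)(1-\tilde a)}$ into the left-hand side by Young's inequality, the exponent being $<1$ away from the trivial endpoint. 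A short computation with \eqref{GN1-1:a} confirms that after this absorption the power of $\|\nabla f\|_{L^{\rsf}(\Omega)}$ is exactly $a$ and that of $\|f\|_{L^{\qsf}(\Omega)}$ is exactly $1-a$. When $\psf < 1$ as well, one first interpolates $\|f\|_{L^{\psf}(\Omega)}$ between $L^{\qsf}(\Omega)$ and $L^1(\Omega)$, then applies the previous step with $\psf$ replaced by $1$, and rearranges using the subadditivity $(x+y)^{\mu} \le x^{\mu}+y^{\mu}$ for $\mu \le 1$ (together with Young's and H\"older's inequalities on $\Omega$ to put the lower-order contribution back into the form $C\|f\|_{L^{\ssf}(\Omega)}$). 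The endpoints $\psf = \infty$ and $a \in \{0,1\}$ are treated separately and directly.

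The main obstacle is not any single deep estimate but the bookkeeping of exponents: one must verify that every power produced along the chain ``$L^{\mathsf t}$-interpolation $\to$ base-case Gagliardo--Nirenberg $\to$ Young'' collapses precisely to the scaling exponent $a$ of \eqref{GN1-1:a} rather than to a nearby value, and one must keep careful track of the ranges of $\psf,\qsf,\rsf$ for which the full-space inequality underlying the base case is actually valid. This last point is exactly the reason the hypotheses split into the two alternatives: case (ii) is what permits $\rsf = \infty$ (which in turn forces $\psf,\qsf \ge 1$ and $\psf < \infty$), whereas case (i) trades $\rsf < \infty$ for arbitrarily small $\psf,\qsf$, and neither alternative crosses into the regime where Gagliardo--Nirenberg estimates fail.
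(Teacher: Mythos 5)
Your proposal is correct in substance, but it takes a different route from the paper, which offers no argument of its own: the paper disposes of the lemma in one line by citing \cite{LL-2016} (Lemma 2.3 there) for case (i) and Nirenberg \cite{N-1959} for case (ii). What you write for case (ii) --- extension to $\R^n$, the full-space interpolation inequality, and a lower-order correction absorbed into $C\|f\|_{L^{\ssf}(\Omega)}$ --- is the standard proof of the quoted classical result, and your case (i) argument is essentially a reconstruction of the proof of the Li--Lankeit lemma that the paper invokes: interpolate the intermediate norm ($\|f\|_{L^{1}(\Omega)}$, or $\|f\|_{L^{\psf}(\Omega)}$ when $\psf<1$) between $L^{\qsf}(\Omega)$ and the base exponent, apply the base-case inequality, and absorb the resulting power of $\|f\|_{L^{\psf}(\Omega)}$ by Young. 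The exponent bookkeeping does close as you assert: with $\tilde a=\frac{1-1/\psf}{1+1/n-1/\rsf}$ and $\beta=\frac{1-1/\psf}{1/\qsf-1/\psf}$ one finds $\tilde a/(\tilde a+\beta(1-\tilde a))=\frac{1/\qsf-1/\psf}{1/\qsf+1/n-1/\rsf}=a$. The trade-off: the citation route is short and delegates the endpoint and constant issues to the references, whereas your self-contained version still owes a few routine details --- the a priori finiteness of $\|f\|_{L^{\psf}(\Omega)}$ (or a truncation/approximation step) before the Young absorption; the cross term $\|f\|_{L^{\qsf}(\Omega)}^{\lambda}\|f\|_{L^{\ssf}(\Omega)}^{1-\lambda}$ arising when $\psf<1$, which is not of the admissible form $C\|f\|_{L^{\ssf}(\Omega)}$ if $\ssf<\qsf$ and needs one more Young/H\"older step with absorption into the left-hand side; and the separately treated endpoints $\psf=\infty$ and $a\in\{0,1\}$ --- but none of these is a substantive gap.
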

\begin{proof}
The case \eqref{GN1-1} was proved in
\cite[Lemma 2.3]{LL-2016}, while
the case \eqref{GN1-2} results from
\cite[p.126, comment 5]{N-1959}.
\end{proof}

In order to state the result of the interpolation inequality
in the case of higher-order regularity
we will introduce the space
\[
    W^{2,2}_{\rm N}(\Omega) :=
    \{f \in W^{2,2}(\Omega) \mid \nabla f \cdot \nu = 0
    \ \mbox{on}\ \pa\Omega\}.
\]
%

%
%
\begin{lem}\label{Lem:GN2}
Let $\psf \in [1, \infty)$, $\qsf \in [1, \infty]$,
$\rsf \in [2, \qsf]$ and $\ssf \in (0, \infty)$,
and assume that
\[
    b := \frac{\frac{1}{\qsf}+\frac{1}{n}-\frac{1}{\psf}}
    {\frac{1}{\qsf}+\frac{2}{n}-\frac{1}{2}}
    \in \left[\frac{1}{2},1\right].
\]
Then there exists a constant
$C = C(\Omega, n, \psf, \qsf, \rsf) > 0$ such that
\[
    \|\nabla f\|_{L^{\psf}(\Omega)}
    \le C (\|\Delta f\|_{L^{2}(\Omega)}^{b}
      + \|f\|_{L^{\rsf}(\Omega)}^b)
    \|f\|_{L^{\qsf}(\Omega)}^{1-b}
    + C \|f\|_{L^{\ssf}(\Omega)}
\]
for all $f \in W^{2,2}_{\rm N}(\Omega) \cap L^{\qsf}(\Omega)$.
\end{lem}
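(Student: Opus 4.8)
The plan is to combine a second-order elliptic estimate for the homogeneous Neumann Laplacian on $\Omega$ with a Gagliardo--Nirenberg interpolation inequality one order higher than the one in Lemma~\ref{Lem:GN1}, and then to trade the full $W^{2,2}(\Omega)$-norm against $\|\Delta f\|_{L^2(\Omega)}$ at the cost of a lower-order term. Since $\pa\Omega$ is of class $C^{2+\delta}$, in particular $C^{1,1}$, the standard elliptic regularity theory for the Neumann problem (equivalently, the fact that $-\Delta + 1$ with Neumann boundary condition is an isomorphism $W^{2,2}_{\rm N}(\Omega) \to L^2(\Omega)$) provides $C_1 = C_1(\Omega, n) > 0$ with
\[
    \|f\|_{W^{2,2}(\Omega)} \le C_1\big(\|\Delta f\|_{L^2(\Omega)} + \|f\|_{L^2(\Omega)}\big)
    \qquad\text{for all}\ f \in W^{2,2}_{\rm N}(\Omega).
\]

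Next I would establish the interpolation inequality
\[
    \|\nabla f\|_{L^{\psf}(\Omega)} \le C_2\,\|f\|_{W^{2,2}(\Omega)}^{b}\,\|f\|_{L^{\qsf}(\Omega)}^{1-b} + C_2\,\|f\|_{L^{\ssf}(\Omega)}
\]
for $f \in W^{2,2}(\Omega) \cap L^{\qsf}(\Omega)$, with precisely the exponent $b$ of the statement. Writing $j = 1$ for the order of the derivative on the left and $m = 2$ for the top order on the right, the scaling identity $\tfrac{1}{\psf} = \tfrac{j}{n} + b\big(\tfrac{1}{2} - \tfrac{m}{n}\big) + (1-b)\tfrac{1}{\qsf}$ is exactly the defining relation for $b$, while the admissibility constraint $\tfrac{j}{m} \le b \le 1$ is exactly the hypothesis $b \in [\tfrac{1}{2}, 1]$. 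This is thus a higher-order Gagliardo--Nirenberg inequality on the bounded domain $\Omega$, which I would deduce either by extending $f$ to $\R^n$ through a Stein-type operator that is simultaneously bounded on $W^{2,2}$ and on $L^{\qsf}$, applying the homogeneous Gagliardo--Nirenberg inequality on $\R^n$ to the extension and restricting, or by applying Lemma~\ref{Lem:GN1} to $\nabla f$ and iterating so as to absorb the intermediate norms of $\nabla f$. (The borderline value $b = 1$ is harmless: for $n = 2$ it would force $\psf = \infty$, which is excluded, and for $n \ge 3$ it merely reduces to the Sobolev inequality $\|\nabla f\|_{L^{2n/(n-2)}(\Omega)} \le C\|f\|_{W^{2,2}(\Omega)}$.)

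It then remains to combine the two displays. Because $\rsf \ge 2$ and $\Omega$ is bounded we have $\|f\|_{L^2(\Omega)} \le |\Omega|^{\frac12 - \frac{1}{\rsf}}\|f\|_{L^{\rsf}(\Omega)}$, and because $b \in [0,1]$ the elementary inequality $(x+y)^b \le x^b + y^b$ for $x, y \ge 0$ yields
\[
    \|f\|_{W^{2,2}(\Omega)}^{b} \le C_1^{b}\big(\|\Delta f\|_{L^2(\Omega)} + \|f\|_{L^2(\Omega)}\big)^{b} \le C_3\big(\|\Delta f\|_{L^2(\Omega)}^{b} + \|f\|_{L^{\rsf}(\Omega)}^{b}\big);
\]
inserting this into the interpolation inequality gives the assertion with $C = C(\Omega, n, \psf, \qsf, \rsf) > 0$.

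The step I expect to demand the most care is the higher-order Gagliardo--Nirenberg inequality on $\Omega$: one must track the exponents through the extension (or through the iteration of Lemma~\ref{Lem:GN1}) and check that the additive lower-order term can indeed be kept with an arbitrary exponent $\ssf \in (0, \infty)$. I would emphasize that the restriction $b \ge \tfrac12$ is not a technical artifact but the genuine admissibility condition $j/m \le a$ for interpolating a first-order norm between a second-order norm and an $L^{\qsf}$-norm, and it is exactly what makes the scheme close.
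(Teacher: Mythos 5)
Your proposal is correct and follows essentially the same route as the paper: a second-order Gagliardo--Nirenberg interpolation inequality on $\Omega$, combined with elliptic regularity for the Neumann Laplacian, the H\"older inequality (exploiting $\rsf \ge 2$), and the subadditivity of $x \mapsto x^b$. The only minor difference is that the paper simply cites Nirenberg's inequality in the seminorm form with $\|D^2 f\|_{L^2(\Omega)}^{b}$ instead of re-deriving the full-norm interpolation estimate by extension or iteration as you propose, which makes the borderline discussion you include unnecessary there.
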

\begin{proof}
Thanks to \cite[p.126, comment 5]{N-1959}, there exists
$c_1 = c_1(\Omega, n, \psf, \qsf) > 0$ such that
\begin{align}\label{GN2:GN}
    \|\nabla f\|_{L^{\psf}(\Omega)}
    \le c_1 \|D^2 f\|_{L^2(\Omega)}^b
    \|f\|_{L^{\qsf}(\Omega)}^{1-b}
    + c_1 \|f\|_{L^{\ssf}(\Omega)}
\end{align}
for all $f \in W^{2,2}(\Omega) \cap L^{\qsf}(\Omega)$.
Moreover, standard elliptic regularity theory
(see \cite[Theorem 9.26]{B-2011} for instance) along with
the H\"{o}lder inequality provides a constant
$c_2 = c_2(\Omega, r) > 0$ such that
\[
    \|D^2 f\|_{L^2(\Omega)}
    \le c_2 (\|\Delta f\|_{L^2(\Omega)} + \|f\|_{L^{\rsf}(\Omega)})
\]
for all $f \in W^{2,2}_{\rm N}(\Omega) \cap L^{\qsf}(\Omega)$.
This together with \eqref{GN2:GN} thereby proves the claim.
\end{proof}

Let us close this section by collecting
some results on
$L^{\psf}$-$L^{\qsf}$ estimates for the Neumann heat
semigroup on bounded intervals.
Throughout the sequel,
we let $(e^{\tau \Delta})_{\tau \ge 0}$
denote the Neumann heat
semigroup in $\Omega$,
and
we let $\lambda_1 > 0$ represent
the first nonzero eigenvalue of $-\Delta$ on $\Omega$
under Neumann boundary conditions.

%
%
\begin{lem}\label{Lem:semigroup}
Let $n=1$.
Then there exist constants $C_1, C_2, C_3
> 0$
depending only on $\Omega$ with the following
properties\/{\rm :}
\begin{enumerate}[{\rm (i)}]
\item\label{semigroup-1}
If $\psf, \qsf$ satisfy
$1 \le \qsf \le \psf \le \infty$, then
\[
    \|\nabla e^{\tau \Delta} w\|_{L^{\psf}(\Omega)}
    \le C_1 (1+ \tau^{-\frac{1}{2}-\frac{1}{2}(
      \frac{1}{\qsf}-\frac{1}{\psf})})
    e^{-\lambda_1 \tau} \|w\|_{L^{\qsf}(\Omega)}
    \quad\mbox{for all}\ \tau > 0
\]
holds for any $w \in L^{\qsf}(\Omega)$.
\item\label{semigroup-2}
If $\psf, \qsf$ fulfill $2 \le \qsf \le \psf \le \infty$, then
\[
    \|\nabla e^{\tau \Delta}w\|_{L^{\psf}(\Omega)}
    \le C_2 
    (1+\tau^{-\frac{1}{2}(\frac{1}{\qsf}-\frac{1}{\psf})})
    e^{-\lambda_1 \tau} \|\nabla w\|_{W^{1, \qsf}(\Omega)}
    \quad\mbox{for all}\ \tau > 0
\]
is valid for all $w \in W^{1,\qsf}(\Omega)$.
\item\label{semigroup-3}
Let $\psf, \qsf$ be such that
$1 < \qsf \le \psf < \infty$ or
$1 < \qsf < \psf = \infty$.
Then
\[
    \|e^{\tau \Delta}\nabla w\|_{L^{\psf}(\Omega)}
    \le C_3 (1+ \tau^{-\frac{1}{2}-\frac{1}{2}(
      \frac{1}{\qsf}-\frac{1}{\psf})})
    e^{-\lambda_1 \tau} \|w\|_{L^{\qsf}(\Omega)}
    \quad\mbox{for all}\ \tau > 0
\]
holds for any $w \in L^{\qsf}(\Omega)$,
where $e^{\tau \Delta}\nabla$
is the extension of the corresponding operator
on $C_{\rm c}^{\infty}(\Omega)$ to a continuous operator
from $L^{\qsf}(\Omega)$ to $L^{\psf}(\Omega)$.
\end{enumerate}
\end{lem}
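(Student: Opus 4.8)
The plan is to recover the three inequalities as the $n=1$ instances of the standard $L^{\psf}$-$L^{\qsf}$ smoothing estimates for the Neumann heat semigroup, built from two elementary facts. First, since $n=1$ the domain $\Omega$ is a bounded interval, and by the method of reflections the Neumann heat kernel $p_{\tau}(x,y)$ and its spatial gradient obey, for $\tau\in(0,1]$, the Gaussian bounds $p_{\tau}(x,y)\lesssim\tau^{-\frac12}e^{-c|x-y|^2/\tau}$ and $|\nabla_x p_{\tau}(x,y)|\lesssim\tau^{-1}e^{-c|x-y|^2/\tau}$; inserting these into Young's convolution inequality yields, for $\tau\in(0,1]$ and $1\le\qsf\le\psf\le\infty$, the short-time smoothing estimate $\|\nabla e^{\tau\Delta}z\|_{L^{\psf}(\Omega)}\le C\,\tau^{-\frac12-\frac12(\frac1{\qsf}-\frac1{\psf})}\|z\|_{L^{\qsf}(\Omega)}$. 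Second, $e^{\tau\Delta}$ fixes the constants, and on their orthogonal complement $-\Delta$ has spectrum in $[\lambda_1,\infty)$, so by the spectral theorem $e^{\tau\Delta}$ acts there as a contraction on $L^2(\Omega)$ of operator norm $e^{-\lambda_1\tau}$; combining this with the standard $L^{\psf}$-$L^{\qsf}$ smoothing and the semigroup property yields $\|e^{\sigma\Delta}g\|_{L^{\qsf}(\Omega)}\le C\,e^{-\lambda_1\sigma}\|g\|_{L^{\qsf}(\Omega)}$ for every zero-average $g$ and all $\sigma>0$.

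For part (i), decompose $w=\overline w+(w-\overline w)$ with $\overline w:=\frac1{|\Omega|}\io w$; since $e^{\tau\Delta}\overline w=\overline w$ we get $\nabla e^{\tau\Delta}w=\nabla e^{\tau\Delta}(w-\overline w)$, and $w-\overline w$ lies in the zero-average subspace with $\|w-\overline w\|_{L^{\qsf}(\Omega)}\le C\|w\|_{L^{\qsf}(\Omega)}$. For $\tau\le1$ the short-time estimate applies directly; for $\tau>1$ one writes $\nabla e^{\tau\Delta}(w-\overline w)=\nabla e^{\Delta}\big(e^{(\tau-1)\Delta}(w-\overline w)\big)$, bounds the outer factor by the short-time estimate at unit time, and bounds the inner factor by the exponential $L^{\qsf}$-decay on the zero-average subspace; this produces the asserted bound with prefactor $(1+\tau^{-\frac12-\frac12(\frac1{\qsf}-\frac1{\psf})})e^{-\lambda_1\tau}$ for all $\tau>0$. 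For part (ii) one exploits that in one dimension $\partial_x$ sends the Neumann condition to the Dirichlet condition: if $u=e^{\tau\Delta}w$ then $v:=\partial_x u$ solves the heat equation with homogeneous Dirichlet data, so $\nabla e^{\tau\Delta}w=e^{\tau\Delta_{\rm D}}(\nabla w)$ for $w\in W^{1,\qsf}(\Omega)$, where $\Delta_{\rm D}$ is the Dirichlet Laplacian. The Dirichlet heat kernel is dominated by the whole-line Gaussian, hence enjoys $L^{\psf}$-$L^{\qsf}$ smoothing at least as strong as above, and on an interval its lowest eigenvalue coincides with $\lambda_1$; applying that smoothing estimate---now without a derivative on the kernel, hence with the milder singularity $\tau^{-\frac12(\frac1{\qsf}-\frac1{\psf})}$---to $\nabla w\in L^{\qsf}(\Omega)$ and using $\|\nabla w\|_{L^{\qsf}(\Omega)}\le\|\nabla w\|_{W^{1,\qsf}(\Omega)}$ gives part (ii).

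Finally, part (iii) follows from part (i) by duality: for $w\in C_{\rm c}^{\infty}(\Omega)$ and $\phi\in C^{\infty}(\Ombar)$, self-adjointness of $e^{\tau\Delta}$ and integration by parts (no boundary term, as $w$ is compactly supported) yield $\io(e^{\tau\Delta}\nabla w)\,\phi=-\io w\cdot\nabla e^{\tau\Delta}\phi$, whence, writing $\psf',\qsf'$ for the conjugate exponents,
\[
  \|e^{\tau\Delta}\nabla w\|_{L^{\psf}(\Omega)}
  =\sup_{\|\phi\|_{L^{\psf'}(\Omega)}\le1}\Big|\io w\cdot\nabla e^{\tau\Delta}\phi\Big|
  \le\|w\|_{L^{\qsf}(\Omega)}\sup_{\|\phi\|_{L^{\psf'}(\Omega)}\le1}\|\nabla e^{\tau\Delta}\phi\|_{L^{\qsf'}(\Omega)}.
\]
Since $\qsf\le\psf$ is equivalent to $\psf'\le\qsf'$ and $\frac1{\psf'}-\frac1{\qsf'}=\frac1{\qsf}-\frac1{\psf}$, part (i) bounds the last supremum by $C(1+\tau^{-\frac12-\frac12(\frac1{\qsf}-\frac1{\psf})})e^{-\lambda_1\tau}$, and since $\qsf<\infty$ throughout the stated ranges, $C_{\rm c}^{\infty}(\Omega)$ is dense in $L^{\qsf}(\Omega)$, so the bound extends $e^{\tau\Delta}\nabla$ to a bounded operator on $L^{\qsf}(\Omega)$. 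None of the three assertions is a genuine obstacle---they are classical---and the only points calling for a little care are the Neumann-to-Dirichlet intertwining in part (ii), together with the $1$D-specific coincidence of the lowest Dirichlet and first nonzero Neumann eigenvalues on an interval that keeps the decay rate equal to $\lambda_1$ throughout, and the verification that the ranges of $\psf,\qsf$ in part (iii) are precisely those for which the density extension and the duality representation of the $L^{\psf}$ norm are valid.
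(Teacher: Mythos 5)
Your proposal is correct, but it takes a genuinely different route from the paper: the paper does not reprove these estimates at all, it simply cites the known semigroup lemmas of Winkler (2010, Lemma 1.3) and Cao (2015, Lemma 2.1) and only remarks that the single case not covered there, namely part (\ref{semigroup-2}) with $\psf=\infty$, follows by repeating Cao's argument with the Poincar\'{e}--Wirtinger inequality. You instead build everything from scratch using the one-dimensionality: Gaussian bounds for the reflected Neumann kernel plus Young's inequality for the short-time smoothing, the spectral gap on the zero-average subspace combined with the semigroup splitting for the decay $e^{-\lambda_1\tau}$ in (\ref{semigroup-1}), the intertwining $\nabla e^{\tau\Delta}w=e^{\tau\Delta_{\rm D}}\nabla w$ (valid only for $n=1$, where the first Dirichlet eigenvalue coincides with $\lambda_1$) for (\ref{semigroup-2}), and duality with (\ref{semigroup-1}) for (\ref{semigroup-3}), checking correctly that $\tfrac{1}{\psf'}-\tfrac{1}{\qsf'}=\tfrac{1}{\qsf}-\tfrac{1}{\psf}$ and that density of $C^{\infty}_{\rm c}(\Omega)$ in $L^{\qsf}(\Omega)$ furnishes the extension. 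What your approach buys is a self-contained proof that treats $\psf=\infty$ in (\ref{semigroup-2}) on the same footing as the other exponents (and in fact does not even need $\qsf\ge 2$ there), whereas the paper's approach is shorter by outsourcing the work; the price is that a few of your steps are only sketched and would need justification in a full write-up, notably the validity of the Dirichlet intertwining identity for general $w\in W^{1,\qsf}(\Omega)$ (e.g.\ via eigenfunction expansion or strong continuity of the analytic semigroup on $W^{1,2}(\Omega)$ plus uniqueness for the Dirichlet heat equation) and the standard but unproved exponential $L^{\qsf}$-decay on the zero-average subspace.
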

\begin{proof}
We refer to \cite[Lemma 1.3~(ii)--(iv)]{W-2010}
and \cite[Lemma 2.1~(ii)--(iv)]{C-2015} for the proofs.
We note that
the case \eqref{semigroup-2} with $\psf = \infty$ is
not contained in \cite{W-2010, C-2015},
however, we can proceed in the same way as in \cite{C-2015}
to obtain the desired estimate, since
the Poinc\'{a}re--Wirtinger inequality is applicable also
in this case.
\end{proof}

\section{Global boundedness in approximate problems}\label{Sec:BddApp}

The purpose of this section is to establish
$\ep$-independent bounds for approximate solutions,
which will be fundamental ingredients for the proof of
Theorem~\ref{Thm:Main}.
Throughout the remaining part of this paper,
for each $\ep \in (0,1)$ we let $(\ue, \ve)$ denote the
approximate solutions obtained in Lemma~\ref{Lem:local_App}
with $\Tmaxe \in (0, \infty]$ representing their maximal
existence time.

As usual, it will be important to achieve $L^q$ estimates for
$\ue$ with $q > 1$.
It is known that in the case of sublinear production
($\theta \le 1$),
the mass conservation feature \eqref{local_App:mass}
immediately provides some $L^{\frac{1}{\theta}}$ bounds
for the term
$\ue^{\theta}$ in the second equation of \eqref{Sys:App},
which together with semigroup estimates makes it
possible to gain
$L^r$ estimates for $\nabla \ve$ with
$1 \le r < \frac{n}{n\theta - 1}$
(cf.\ \cite[Lemma 2.2]{KoP} and \cite[Theorem 1.1]{MC-2024}).

Since we cannot expect such situations in
our case,
in the first step toward our analysis we will rely on some properties
of the functionals $\sgn(q-1)\io \ue^q$ for $q > 0$
under the restriction that $p < 2$,
which will be used to obtain $L^2$ estimates for $\ve$ in
Lemma~\ref{Lem:veL2}.

%
%
\begin{lem}\label{Lem:ueLqFunc1}
Let $n\in\N$, and
let $p>1$ satisfy
%
\[
    p \in \left(1, \min\left\{2, 1+\frac{2}{n}\right\}\right).
\]
%
Suppose that $q > 0$ fulfills
\begin{align}\label{ueLqFunc1:q}
    q \in \left(\max\left\{0, 1-\frac{2}{n}\right\}, 
        \frac{2(2-p)}{n(p-1)}\right)\setminus\{1\}.
\end{align}
Then for any $\eta > 0$ there exists a constant $C>0$ such that
\begin{align}
\nonumber
    &\sgn(q-1) \ddt \io \ue^q(\cdot,t)
    + \left(\frac{q|q-1|}{2} - \eta\right) \io
      \ue^{q-2}(\cdot,t)|\nabla \ue(\cdot,t)|^2
\\
    &\quad\,\le C\io |\nabla \ve(\cdot,t)|^2 + C
    \label{ueLqFunc1:ddtueLq}
\end{align}
for all $t \in (0, \Tmaxe)$ and $\ep \in (0,1)$.
\end{lem}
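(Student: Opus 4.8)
Fix $t\in(0,\Tmaxe)$ and $\ep\in(0,1)$; below, $\ue$ and $\ve$ are evaluated at time $t$, and the argument will produce the pointwise-in-$t$ inequality \eqref{ueLqFunc1:ddtueLq}. The plan is the standard one: test the first equation of \eqref{Sys:App} against $\sgn(q-1)\ue^{q-1}$ and absorb the chemotactic cross term. Multiplying the first equation of \eqref{Sys:App} by $\sgn(q-1)\ue^{q-1}$, integrating over $\Omega$, and integrating by parts (the boundary terms vanish since $\nabla\ue\cdot\nu=0$ on $\pa\Omega$, and all manipulations are legitimate because $\ue$ is a positive classical solution on $(0,\Tmaxe)$) yields
\[
    \sgn(q-1)\ddt\io\ue^q + q|q-1|\io\ue^{q-2}|\nabla\ue|^2
    = q|q-1|\chi\io \ue^{q-1}(|\nabla\ve|^2+\ep)^{\frac{p-2}{2}}\nabla\ue\cdot\nabla\ve.
\]
Because $p<2$ one has $(|\nabla\ve|^2+\ep)^{\frac{p-2}{2}}|\nabla\ve|\le(|\nabla\ve|^2+\ep)^{\frac{p-1}{2}}\le|\nabla\ve|^{p-1}+1$, and setting $w:=\ue^{q/2}$, so that $\io\ue^{q-2}|\nabla\ue|^2=\frac{4}{q^2}\io|\nabla w|^2$ and $\ue^{q-1}|\nabla\ue|=\frac{2}{q}w|\nabla w|$, two applications of Young's inequality bound the right-hand side by $\sigma\io|\nabla w|^2+C_\sigma\io w^2|\nabla\ve|^{2(p-1)}+C_\sigma\io w^2$ for any $\sigma>0$. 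Choosing $\sigma$ small (depending on $q$ and $\eta$), the task reduces to estimating $\io w^2|\nabla\ve|^{2(p-1)}$ and $\io w^2$ by $\sigma_0\io|\nabla w|^2+C\io|\nabla\ve|^2+C$ with $\sigma_0$ as small as we wish.

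For $\io w^2=\io\ue^q$: if $q<1$, then Jensen's inequality together with \eqref{local_App:mass} and \eqref{bound:uie} gives $\io\ue^q\le|\Omega|^{1-q}\big(\io\uie\big)^q\le C$ directly; if $q>1$, then Lemma~\ref{Lem:GN1} with $\psf=2$, $\qsf=2/q$, $\rsf=2$, combined with the $\ep$-independent bound for $\io\ue$, yields $\io w^2\le C\|\nabla w\|_{L^2(\Omega)}^{2a}+C$ with $a=\frac{q-1}{q-1+2/n}<1$, and one further use of Young's inequality produces $\sigma_0\|\nabla w\|_{L^2(\Omega)}^2+C$.

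The term $\io w^2|\nabla\ve|^{2(p-1)}=\io\ue^q|\nabla\ve|^{2(p-1)}$ is the heart of the matter. Hölder's inequality with exponents $\frac{1}{2-p}$ and $\frac{1}{p-1}$ (both $>1$ since $1<p<2$) gives
\[
    \io\ue^q|\nabla\ve|^{2(p-1)}\le\Big(\io\ue^{\frac{q}{2-p}}\Big)^{2-p}\Big(\io|\nabla\ve|^2\Big)^{p-1}.
\]
If $q\le2-p$, then $\frac{q}{2-p}\le1$, so Jensen and \eqref{local_App:mass} bound the first factor and the whole expression is $\le C\big(\io|\nabla\ve|^2\big)^{p-1}\le C\io|\nabla\ve|^2+C$. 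If $q>2-p$, I rewrite $\io\ue^{q/(2-p)}=\io w^{2/(2-p)}$ and apply Lemma~\ref{Lem:GN1} with $\psf=\frac{2}{2-p}$, $\qsf=\frac{2}{q}$, $\rsf=2$ — the requirement $\qsf\le\psf$ being exactly $q\ge2-p$ — obtaining $\big(\io\ue^{q/(2-p)}\big)^{2-p}=\|w\|_{L^{2/(2-p)}(\Omega)}^{2}\le C\|\nabla w\|_{L^2(\Omega)}^{2\tilde a}+C$, where a short computation identifies the interpolation exponent as $\tilde a=\frac{n(q+p-2)}{n(q-1)+2}$. Here $q>\max\{0,1-\frac{2}{n}\}$ ensures $n(q-1)+2>0$ and $\tilde a>0$, while $p<1+\frac{2}{n}$ forces $\tilde a<1$; thus $\io\ue^q|\nabla\ve|^{2(p-1)}\le C\|\nabla w\|_{L^2(\Omega)}^{2\tilde a}\big(\io|\nabla\ve|^2\big)^{p-1}+C\big(\io|\nabla\ve|^2\big)^{p-1}$, and Young's inequality with exponents $\frac{1}{\tilde a}$ and $\frac{1}{1-\tilde a}$ absorbs the first summand into $\sigma_0\|\nabla w\|_{L^2(\Omega)}^2$ at the cost of $C\big(\io|\nabla\ve|^2\big)^{\frac{p-1}{1-\tilde a}}$. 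It is exactly at this point that the bound $q<\frac{2(2-p)}{n(p-1)}$ in \eqref{ueLqFunc1:q} is used: a direct manipulation shows it is equivalent to $\tilde a<2-p$, hence to $\frac{p-1}{1-\tilde a}<1$, so that both $\big(\io|\nabla\ve|^2\big)^{\frac{p-1}{1-\tilde a}}$ and $\big(\io|\nabla\ve|^2\big)^{p-1}$ are $\le\io|\nabla\ve|^2+C$ by Young's inequality once more.

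Collecting the estimates and choosing the various small parameters so that every occurrence of $\io|\nabla w|^2=\frac{q^2}{4}\io\ue^{q-2}|\nabla\ue|^2$ is absorbed into the diffusion term, leaving a residual coefficient no smaller than $\frac{q|q-1|}{2}-\eta$, and lumping all constants into one $C$, we arrive at \eqref{ueLqFunc1:ddtueLq}. The only genuinely delicate part — and the sole place where the precise ranges of $p$ and $q$ enter — is the exponent bookkeeping in the last step: verifying $\tilde a<1$ (so that the Gagliardo–Nirenberg inequality of Lemma~\ref{Lem:GN1} applies) and $\tilde a<2-p$ (so that the power of $\io|\nabla\ve|^2$ emerging from Young's inequality does not exceed $1$). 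Everything else is a routine test-function-and-absorption computation.
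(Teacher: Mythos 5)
Your proof is correct and follows essentially the same route as the paper: testing with $\sgn(q-1)\ue^{q-1}$, H\"older with exponents $\frac{1}{2-p}$ and $\frac{1}{p-1}$, Gagliardo--Nirenberg interpolation with exactly the paper's exponent ($\tilde a$ coincides with the paper's $a_1$), and the observation that $q<\frac{2(2-p)}{n(p-1)}$ makes $\frac{p-1}{1-\tilde a}<1$ so the powers of $\io|\nabla\ve|^2$ can be reduced by Young. Your only deviations are cosmetic: the cruder bound $|\nabla\ve|^{p-1}+1$ produces an extra $\io\ue^q$ term you then control, and your case split $q\le 2-p$ versus $q>2-p$ (Jensen versus Gagliardo--Nirenberg) is in fact a slightly more careful treatment of the regime where the interpolation exponent would be nonpositive, which the paper handles with a single Gagliardo--Nirenberg application.
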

\begin{proof}
Testing the first equation in \eqref{Sys:App} against
$\sgn(q-1)\ue^{q-1}$,
we see that due to the Young inequality,
\begin{align}
\nonumber
    \frac{\sgn(q-1)}{q}\ddt \io \ue^q
    &= -|q-1| \io \ue^{q-2}|\nabla \ue|^2
    + \chi|q-1| \io \ue^{q-1} (|\nabla \ve|^2+\ep)^{\frac{p-2}{2}}
    \nabla \ue \cdot \nabla \ve
\\
    &\le -\frac{|q-1|}{2} \io \ue^{q-2}{|\nabla \ue|^2}
    + \frac{\chi^2 |q-1|}{2} \io \ue^q |\nabla \ve|^{2(p-1)}
    \label{ueLqFunc1:ineq1}
\end{align}
in $(0, \Tmaxe)$ for all $\ep \in (0,1)$.
Here since $p < 2$, we may use
the H\"{o}lder inequality to estimate
\begin{align}\label{ueLqFunc1:ineq2}
    \io \ue^q |\nabla \ve|^{2(p-1)}
    \le \left(\io \ue^{\frac{q}{2-p}}\right)^{2-p}
    \left(\io |\nabla \ve|^2\right)^{p-1}
\end{align}
in $(0,\Tmaxe)$ for each $\ep \in (0, 1)$.
Moreover, the assumptions
$p < 1+\frac{2}{n}$ and $q > 1-\frac{2}{n}$
enable us to apply the Gagliardo--Nirenberg inequality
(Lemma~\ref{Lem:GN1}~\eqref{GN1-1} or \eqref{GN1-2}) to find $c_1 > 0$ such that
\begin{align*}
    \left(\io \ue^{\frac{q}{2-p}}\right)^{2-p}
    &= \big\|\ue^{\frac{q}{2}}\big\|_{L^{\frac{2}{2-p}}(\Omega)}^2
\\
    &\le c_1 \big\|\nabla \ue^{\frac{q}{2}}\big\|_{L^2(\Omega)}^{2a_1}
      \big\| \ue^{\frac{q}{2}}\big\|_{L^{\frac{2}{q}}(\Omega)}^{2(1-a_1)}
      + c_1 \big\| \ue^{\frac{q}{2}}\big\|_{L^{\frac{2}{q}}(\Omega)}^2
\\
    &= \left(\frac{q^2}{4}\right)^{a_1} c_1
      \left(\io \ue^{q-2}|\nabla \ue|^2\right)^{a_1}
      \|\ue\|_{L^1(\Omega)}^{q(1-a_1)}
      + c_1 \|\ue\|_{L^1(\Omega)}^q
\end{align*}
in $(0, \Tmaxe)$ for any $\ep \in (0,1)$,
where $a_1 := \frac{\frac{q}{2} - \frac{2-p}{2}}{\frac{q}{2}
+ \frac{1}{n} - \frac{1}{2}} \in (0,1)$.
In light of \eqref{local_App:mass} and \eqref{bound:uie},
inserting this and \eqref{ueLqFunc1:ineq2}
into \eqref{ueLqFunc1:ineq1}
and applying the Young inequality,
we infer that for arbitrary fixed $\eta > 0$
there exists $c_2 > 0$ satisfying
\begin{align}
\nonumber
    &\sgn(q-1) \ddt \io \ue^q 
    + \frac{q|q-1|}{2} \io \ue^{q-2}|\nabla \ue|^2
\\
    &\quad\,
      \le\eta \io \ue^{q-2}|\nabla \ue|^2
      + c_2 \left(\io |\nabla \ve|^2\right)^{
        \frac{p-1}{1-a_1}}
    + c_2 \left(\io |\nabla \ve|^2\right)^{p-1}
    \label{ueLqFunc1:ineq3}
\end{align}
in $(0, \Tmaxe)$ for all $\ep \in (0,1)$.
Furthermore, since the relation $a_1 < 1$ and
the condition \eqref{ueLqFunc1:q} ensure that
\[
    p-1 < \frac{p-1}{1-a_1}
    = \frac{(\frac{q}{2} + \frac{1}{n} - \frac{1}{2})(p-1)}
      {\frac{1}{n} + \frac{1}{2} - \frac{p}{2}}
    < \frac{\frac{2+n-np}{2n(p-1)}(p-1)}
      {\frac{1}{n} + \frac{1}{2} - \frac{p}{2}}
    = 1,
\]
once more using the Young inequality we have
\begin{align}\label{ueLqFunc1:vex1}
    &\left(\io |\nabla \ve|^2\right)^{
        \frac{p-1}{1-a_1}}
    \le \io |\nabla \ve|^2 + 1
\\ \intertext{and}
\nonumber
    &\left(\io |\nabla \ve|^2 \right)^{p-1}
      \le \io |\nabla \ve|^2 + 1
\end{align}
in $(0, \Tmaxe)$ for each $\ep \in (0,1)$.
Together with \eqref{ueLqFunc1:ineq3} and \eqref{ueLqFunc1:vex1}
this establishes \eqref{ueLqFunc1:ddtueLq} with $C=2c_2$.
\end{proof}

In order to gain $L^2$ estimates for $\ve$
we next rely on a standard testing
procedure associated with the second equation in
\eqref{Sys:App}.

%
%
\begin{lem}\label{Lem:veL2Func}
Let $n\in \N$ and $\theta > 1$,
and assume that $q > 0$ satisfies
\begin{align}\label{veL2Func:q}
    q > 
    \max\left\{0, 2\theta-\frac{4}{n}, 2\theta-1-\frac{2}{n}\right\}.
\end{align}
Then for all $\eta > 0$ there exists a constant $C>0$ such that
\begin{align}\label{veL2Func:ddtveL2}
    \ddt \io \ve^2(\cdot,t) + \io \ve^2(\cdot,t) 
    + \io |\nabla\ve(\cdot,t)|^2
    \le \eta \io \ue^{q-2}(\cdot,t)|\nabla \ue(\cdot,t)|^2
    + C
\end{align}
for all $t \in (0, \Tmaxe)$ and $\ep \in (0,1)$.
\end{lem}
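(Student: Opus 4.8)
The plan is to use the natural $L^2$ energy identity for the second equation in \eqref{Sys:App}, to absorb the Dirichlet energy it produces by a Sobolev embedding, and then to remove the remaining superlinear term in $\ue$ by a Gagliardo--Nirenberg interpolation anchored at the conserved mass \eqref{local_App:mass}.

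First I would test the second equation in \eqref{Sys:App} by $\ve$, obtaining
\[
    \frac{1}{2}\ddt\io\ve^2 + \io|\nabla\ve|^2 + \io\ve^2 = \io\ue^{\theta}\ve
    \qquad\mbox{in}\ (0,\Tmaxe).
\]
To handle the right-hand side I would invoke the Sobolev embedding $W^{1,2}(\Omega)\embd L^{r}(\Omega)$, taking $r=\frac{2n}{n-2}$ when $n\ge3$, any finite $r$ as large as needed when $n=2$, and $r=\infty$ when $n=1$. Writing $r'$ for the conjugate exponent and combining the H\"{o}lder and Young inequalities with $\|\ve\|_{W^{1,2}(\Omega)}^2=\io|\nabla\ve|^2+\io\ve^2$, this gives
\[
    \io\ue^{\theta}\ve \le \frac{1}{2}\io|\nabla\ve|^2 + \frac{1}{2}\io\ve^2
    + C\|\ue\|_{L^{r'\theta}(\Omega)}^{2\theta},
\]
so that, after substituting and multiplying by $2$,
\[
    \ddt\io\ve^2 + \io|\nabla\ve|^2 + \io\ve^2 \le 2C\|\ue\|_{L^{r'\theta}(\Omega)}^{2\theta}
    \qquad\mbox{in}\ (0,\Tmaxe).
\]

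It then remains to absorb $\|\ue\|_{L^{r'\theta}(\Omega)}^{2\theta}$ into $\eta\io\ue^{q-2}|\nabla\ue|^2+C$. Setting $w:=\ue^{q/2}$, so that $\io|\nabla w|^2=\frac{q^2}{4}\io\ue^{q-2}|\nabla\ue|^2$ and $\|\ue\|_{L^{r'\theta}(\Omega)}^{2\theta}=\|w\|_{L^{2r'\theta/q}(\Omega)}^{4\theta/q}$, I would apply the Gagliardo--Nirenberg inequality (Lemma~\ref{Lem:GN1}) to interpolate $\|w\|_{L^{2r'\theta/q}(\Omega)}$ between $\|\nabla w\|_{L^2(\Omega)}$ and $\|w\|_{L^{2/q}(\Omega)}=(\io\ue)^{q/2}$, the latter being bounded uniformly in $t$ and $\ep$ by \eqref{local_App:mass} and \eqref{bound:uie}. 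This bounds the quantity above by $C\|\nabla w\|_{L^2(\Omega)}^{4\theta a/q}+C$ with the interpolation exponent $a$, and a concluding application of Young's inequality turns it into the claimed right-hand side $\eta\io\ue^{q-2}|\nabla\ue|^2+C$ as soon as $\frac{4\theta a}{q}<2$, i.e.\ $a<\frac{q}{2\theta}$.

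The heart of the matter — and the only place \eqref{veL2Func:q} enters — is then checking admissibility of the Gagliardo--Nirenberg parameters (in particular $a\in[0,1]$ and $r'\theta\ge1$) together with the key inequality $a<\frac{q}{2\theta}$. A direct computation shows that $a<\frac{q}{2\theta}$ is equivalent to $\theta-\frac{1}{r'}<\frac{q}{2}+\frac1n-\frac12$, which, upon inserting the above choices of $r'$, becomes $q>2\theta-\frac4n$ for $n\ge3$, $q>2\theta-2$ for $n=2$ (obtained by letting $r'\dwto1$), and $q>2\theta-3$ for $n=1$; combined with the trivial requirement $q>0$ this is exactly \eqref{veL2Func:q}. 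I expect the fiddliest point to be the case $n=2$, where $r=\infty$ is not admissible and $r$ must be chosen large depending on the gap between $q$ and $2\theta-2$, along with the routine verification that $a\le1$ is guaranteed by \eqref{veL2Func:q} because $\theta>1$.
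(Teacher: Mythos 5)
Your proposal is correct and follows essentially the same route as the paper: testing the second equation with $\ve$, combining H\"older with the embedding $W^{1,2}(\Omega)\embd L^{r}(\Omega)$, then interpolating the resulting term $\|\ue\|_{L^{r'\theta}(\Omega)}^{2\theta}$ via Gagliardo--Nirenberg against the conserved mass and absorbing by Young, with \eqref{veL2Func:q} entering exactly through the exponent condition you identify. The only (immaterial) difference is that the paper fixes one auxiliary exponent $r>\max\{1,\tfrac{2n}{n+2}\}$ subject to two inequalities and treats all $n$ at once, whereas you argue case-by-case ($n=1$, $n=2$, $n\ge 3$) using the endpoint Sobolev exponents.
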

\begin{proof}
We first note that since \eqref{veL2Func:q} implies that
\begin{align*}
    &q > 2\theta + 1 - \frac{2}{n} 
    - 2\min\left\{1, \frac{n+2}{2n}\right\}
\\ \intertext{and}
    &q > \frac{2\theta(n-2)}{n+2}
    = \left(1 - \frac{2}{n}\right)\theta\frac{2n}{n+2},
\end{align*}
there exists $r > \max\{1, \frac{2n}{n+2}\}$ fulfilling
\begin{align}
    &q > 2\theta + 1 - \frac{2}{n} - \frac{2}{r}
    \label{veL2Func:Young}
\\ \intertext{as well as}
    &q > \left(1 - \frac{2}{n}\right)\theta r.
    \label{veL2Func:GNr}
\end{align}
Keeping this value of $r$ fixed henceforth,
we now multiply the second equation in \eqref{Sys:App} by
$\ve$
and make use of the H\"{o}lder inequality,
the embedding
$W^{1,2}(\Omega) \embd L^{\frac{r}{r-1}}(\Omega)$
and the Young inequality to find $c_1 > 0$ such that
\begin{align}
\nonumber
    \frac{1}{2}\ddt \io \ve^2 + \io |\nabla\ve|^2
    + \io \ve^2 &= \io \ue^{\theta} \ve
\\ \nonumber
    &\le \left(\io \ue^{\theta r}\right)^{\frac{1}{r}}
      \|\ve\|_{L^{\frac{r}{r-1}}(\Omega)}
\\ \nonumber
    &\le c_1 \left(\io \ue^{\theta r}\right)^{\frac{1}{r}}
      \left(\io \ve^2 + \io |\nabla \ve|^2\right)^{\frac{1}{2}}
\\
    &\le \frac{1}{2} \io \ve^2 + \frac{1}{2} \io |\nabla \ve|^2
      + \frac{c_1}{2} \left(\io \ue^{\theta r}\right)^{\frac{2}{r}}
    \label{veL2Func:ineq1}
\end{align}
in $(0,\Tmaxe)$ for all $\ep \in (0,1)$.
Moreover, the relation \eqref{veL2Func:GNr} warrants that
the Gagliardo--Nirenberg inequality (Lemma~\ref{Lem:GN1}~\eqref{GN1-1})
becomes applicable so as to yield $c_2 > 0$ such that
\begin{align}
\nonumber
    \left(\io \ue^{\theta r}\right)^{\frac{2}{r}}
    &= \big\|\ue^{\frac{q}{2}}\big\|_{
      L^{\frac{2\theta r}{q}}(\Omega)}^{
        \frac{4\theta}{q}}
\\ \nonumber
    &\le c_2 \big\|\nabla \ue^{\frac{q}{2}}\big\|_{L^2(\Omega)}^{
      \frac{4a_1 \theta}{q}}
    \big\|\ue^{\frac{q}{2}}\big\|_{L^{\frac{2}{q}}(\Omega)}^{
      \frac{4(1-a_1)\theta}{q}}
    + c_2 \big\|\ue^{\frac{q}{2}}\big\|_{L^{\frac{2}{q}}(\Omega)}^{
      \frac{4\theta}{q}}
\\
    &= \left(\frac{q^4}{16}\right)^{\frac{a_1 \theta}{q}}
      c_2 \left(\io \ue^{q-2}|\nabla \ue|^2\right)^{
        \frac{2a_1 \theta}{q}}
      \|\ue\|_{L^1(\Omega)}^{2(1-a_1)\theta}
    + c_2 \|\ue\|_{L^1(\Omega)}^{2\theta}
    \label{veL2Func:GN}
\end{align}
in $(0, \Tmaxe)$ for every $\ep \in (0,1)$, where
$a_1 := \frac{\frac{q}{2} - \frac{q}{2\theta r}}
{\frac{q}{2} + \frac{1}{n} - \frac{1}{2}} \in (0,1)$.
Here, \eqref{veL2Func:Young} ensures that
\[
    \frac{2a_1 \theta}{q}
    = \frac{\theta - \frac{1}{r}}{\frac{q}{2}+\frac{1}{n}-\frac{1}{2}}
    < \frac{\theta-\frac{1}{r}}
    {\frac{1}{2}(2\theta+1-\frac{2}{n}-\frac{2}{r})+\frac{1}{n}
    -\frac{1}{2}} = 1,
\]
so that we can utilize the Young inequality in the right of
\eqref{veL2Func:GN} along with
\eqref{local_App:mass} and \eqref{bound:uie} to observe that
for any fixed $\eta > 0$,
there exists $c_3 > 0$ such that
\[
    \left(\io \ue^{\theta r}\right)^{\frac{2}{r}}
    \le \frac{\eta}{c_1} \io \ue^{q-2}|\nabla \ue|^2
    + c_3
\]
in $(0, \Tmaxe)$ for any $\ep \in (0,1)$.
Together with \eqref{veL2Func:ineq1}, this readily
yields
\eqref{veL2Func:ddtveL2} with $C=c_1 c_3$.
\end{proof}

As a consequence of Lemmas~\ref{Lem:ueLqFunc1}
and \ref{Lem:veL2Func},
we can derive $L^2$ estimates for $\ve$
on the basis of the observation that under a
smallness condition on $p$, functionals of the form
$\sgn(q-1)\io \ue^q + C \io \ve^2$ enjoy entropy-like properties.

%
%
\begin{lem}\label{Lem:veL2}
Let $n \in \N$, and
suppose that $p > 1$ and $\theta > 1$ satisfy
\begin{align}\label{veL2:p}
    \begin{cases}
        p \in \left(1, 
          \min\left\{2, 
              \dfrac{2\theta+1}{2\theta-1}\right\}\right)
          & \quad\mbox{if}\ n=1, \\[3mm]
        p \in \left(1, \dfrac{n\theta}{n\theta-1}\right)
          & \quad\mbox{if}\ n \ge 2.
    \end{cases}
\end{align}
Then there exists a constant $C>0$ such that
\begin{align}\label{veL2:veL2}
    \io \ve^2(\cdot,t) \le C
\end{align}
for all $t \in (0, \Tmaxe)$ and $\ep \in (0,1)$.
\end{lem}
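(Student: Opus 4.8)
The plan is to combine the two functional inequalities from Lemmas~\ref{Lem:ueLqFunc1} and \ref{Lem:veL2Func} into a single entropy-type differential inequality for a quantity of the form $y_{\ep}(t) := \sgn(q-1)\io \ue^q(\cdot,t) + K \io \ve^2(\cdot,t)$ with a suitably large constant $K > 0$, and then close the loop by a Gronwall-type argument combined with the mass conservation \eqref{local_App:mass}. The first task is to verify that the assumption \eqref{veL2:p} actually permits a choice of $q > 0$ that simultaneously satisfies the hypothesis \eqref{ueLqFunc1:q} of Lemma~\ref{Lem:ueLqFunc1} and the hypothesis \eqref{veL2Func:q} of Lemma~\ref{Lem:veL2Func}; that is, the intervals
\[
    \left(\max\left\{0,1-\tfrac{2}{n}\right\},\ \tfrac{2(2-p)}{n(p-1)}\right)\setminus\{1\}
    \qquad\text{and}\qquad
    \left(\max\left\{0,\ 2\theta-\tfrac{4}{n},\ 2\theta-1-\tfrac{2}{n}\right\},\ \infty\right)
\]
must have nonempty intersection (avoiding the single point $q=1$). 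This is an elementary but slightly tedious compatibility check: the binding constraint is that the upper endpoint $\tfrac{2(2-p)}{n(p-1)}$ from Lemma~\ref{Lem:ueLqFunc1} must exceed the lower endpoints required by Lemma~\ref{Lem:veL2Func}, and unraveling the inequality $\tfrac{2(2-p)}{n(p-1)} > 2\theta - 1 - \tfrac{2}{n}$ (together with its counterpart involving $2\theta-\tfrac 4n$) should reduce precisely to the smallness condition $p < \tfrac{n\theta}{n\theta-1}$ (respectively $p < \tfrac{2\theta+1}{2\theta-1}$ when $n=1$); one also has to check $p < 2$ and $p < 1+\tfrac 2n$, which are built into \eqref{veL2:p}. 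I would isolate this as the first paragraph of the proof.

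Once such a $q$ is fixed, I would apply Lemma~\ref{Lem:ueLqFunc1} with a small $\eta_1 > 0$ and Lemma~\ref{Lem:veL2Func} with a small $\eta_2 > 0$, then form the combination (inequality from Lemma~\ref{Lem:ueLqFunc1}) $+\,K\times$(inequality from Lemma~\ref{Lem:veL2Func}). The terms $\io \ue^{q-2}|\nabla\ue|^2$ appear with coefficient $\big(\tfrac{q|q-1|}{2}-\eta_1\big)$ on the good side and $K\eta_2$ on the bad side, so choosing first $\eta_1 = \tfrac{q|q-1|}{4}$, then $K$ arbitrary, then $\eta_2 = \tfrac{q|q-1|}{8K}$ makes the gradient terms cancel (leaving a nonnegative remainder one can simply drop). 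Simultaneously the term $C\io|\nabla\ve|^2$ from Lemma~\ref{Lem:ueLqFunc1} on the bad side is absorbed by the term $K\io|\nabla\ve|^2$ on the good side provided $K > C$; choosing $K := C + 1$ (with $C$ the constant from Lemma~\ref{Lem:ueLqFunc1}) handles both requirements. The upshot is a differential inequality
\[
    \ddt\left[\sgn(q-1)\io \ue^q + K\io \ve^2\right]
    + K\io \ve^2 \le C'
    \qquad\text{in }(0,\Tmaxe)
\]
for all $\ep\in(0,1)$, with $C'$ independent of $\ep$.

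The remaining difficulty, and the one I expect to be the genuine obstacle, is that $y_{\ep}$ is not by itself a quantity one can Gronwall directly when $q < 1$, because then $\sgn(q-1) = -1$ and the $\ue^q$-contribution carries the wrong sign; moreover even when $q > 1$, the dissipative term on the left is only $K\io\ve^2$, not $y_{\ep}$ itself, so one cannot immediately conclude exponential decay of $y_{\ep}$. The resolution is to exploit mass conservation: by \eqref{local_App:mass} and \eqref{bound:uie} (with the Hölder inequality when $q \le 1$, or the interpolation $\|\ue\|_{L^q} \le \|\ue\|_{L^1}^{1-\lambda}\|\ue\|_{L^q}^{\lambda}$ type estimate is not needed since for $q>1$ one instead needs a lower bound, which is trivial), one controls $\big|\sgn(q-1)\io\ue^q\big|$: when $q \le 1$, Jensen/Hölder gives $\io\ue^q \le |\Omega|^{1-q}\big(\io\ue\big)^q$, a bounded quantity, so $y_{\ep}(t) \ge K\io\ve^2 - C''$; when $q > 1$ one simply has $y_{\ep}(t) \ge K\io\ve^2 \ge 0$ since $\io\ue^q \ge 0$. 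Thus in either case $K\io\ve^2 \ge y_{\ep} - C''$, whence the differential inequality upgrades to $y_{\ep}' + y_{\ep} \le C' + C''$, and an ODE comparison together with an $\ep$-uniform bound on the initial value $y_{\ep}(0)$ — which follows from \eqref{bound:uie}, \eqref{bound:vie} and the convergences in (I) and (III) — yields $y_{\ep}(t) \le \max\{y_{\ep}(0),\, C'+C''\}$ uniformly in $t$ and $\ep$. Feeding this back into the lower bound $y_{\ep}(t) \ge K\io\ve^2 - C''$ produces the desired $\ep$-independent estimate $\io\ve^2(\cdot,t) \le C$, completing the proof.
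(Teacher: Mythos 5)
Your setup (existence of an admissible $q$, the weighted combination of Lemmas~\ref{Lem:ueLqFunc1} and \ref{Lem:veL2Func} with $\eta_1,\eta_2$ small and $K$ large) matches the paper, but the Gronwall closure contains a genuine gap in the case $q>1$. You discard the leftover dissipation $\tfrac{q|q-1|}{8}\io\ue^{q-2}|\nabla\ue|^2$ and keep only $K\io\ve^2$ on the left, and then claim that ``in either case $K\io\ve^2\ge y_\ep-C''$.'' For $q\le1$ this is fine, since then $\io\ue^q\le|\Omega|^{1-q}(\io\ue)^q$ is bounded by mass conservation. For $q>1$, however, the inequality $\io\ue^q\ge0$ only gives $y_\ep\ge K\io\ve^2$, which points in the wrong direction: to upgrade to $y_\ep'+cy_\ep\le C$ you would need $\io\ue^q\le C''$, i.e.\ precisely the $L^q$ bound on $\ue$ that is not yet available at this stage (it is only obtained later, in Lemma~\ref{Lem:ueLq}, after $\io|\nabla\ve|^2$ has been controlled). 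Moreover the case $q>1$ cannot be avoided: the admissible range for $q$ is bounded below by $\max\{0,2\theta-\tfrac4n,2\theta-1-\tfrac2n\}$, which is $\ge1$ whenever, say, $\theta\ge2$ (any $n$), or $n=2$ and $\theta\ge\tfrac32$, so your argument breaks down on a substantial part of the parameter range covered by the lemma.

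The repair is exactly the ingredient you threw away: keep the term $\tfrac{q|q-1|}{8}\io\ue^{q-2}|\nabla\ue|^2$ in the combined inequality and convert it into a zeroth-order quantity. This is what the paper does: when $q>1$ the Gagliardo--Nirenberg inequality together with the Young inequality, \eqref{local_App:mass} and \eqref{bound:uie} yields $\io\ue^{q-2}|\nabla\ue|^2\ge c\io\ue^q-c'$ (and when $q<1$ the Poincar\'e--Wirtinger inequality plus H\"older gives the analogous bound), so that the functional $\mathcal{F}_{1,\ep}=\sgn(q-1)\io\ue^q+(c_1+1)\io\ve^2$ itself is dominated by the left-hand side and satisfies $\ddt\mathcal{F}_{1,\ep}+c_6\mathcal{F}_{1,\ep}\le c_7$; ODE comparison with the $\ep$-uniform initial bounds from \eqref{bound:uie}--\eqref{bound:vie} then gives \eqref{veL2:veL2} (adding back the bounded $\io\ue^q$ when $q<1$, as you do). Your first paragraph's compatibility check is the right reduction, though it is only sketched; carrying out the computation $\tfrac{2(2-p)}{n(p-1)}=\tfrac{2}{n(p-1)}-\tfrac2n>2\theta-\tfrac4n$ under $p<\tfrac{n\theta}{n\theta-1}$ (resp.\ the $n=1$ variant under $p<\tfrac{2\theta+1}{2\theta-1}$) would complete that step.
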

\begin{proof}
We first claim that there exists $q > 0$ fulfilling both
\eqref{ueLqFunc1:q} and \eqref{veL2Func:q}.
When $n=1$ and $\theta \le \frac{3}{2}$, we only need to choose
$q \in (0, \frac{2(2-p)}{p-1})\setminus\{1\}$ to verify the claim.
If $n = 1$ and $\theta > \frac{3}{2}$, we see from the restriction
$p < \frac{2\theta+1}{2\theta-1}$ in
\eqref{veL2:p} that
$\frac{2(2-p)}{p-1} = \frac{2}{p-1} - 2 > 2\theta - 3$,
which guarantees the existence of such a constant $q$.
Similarly, when $n \ge 2$, the assumption
\eqref{veL2:p}
entails that $\frac{2(2-p)}{p-1} > 2\theta - \frac{4}{n}$ and thus
proves the claim.
Keeping this selection fixed,
we infer from the fact $\theta > 1$ that
$\frac{n\theta}{n\theta-1} < 1+\frac{1}{n-1} \le 1+\frac{2}{n}$ if
$n\ge 2$, so that
Lemma~\ref{Lem:ueLqFunc1} becomes applicable
so as to yield $c_1 > 0$ fulfilling
\begin{align}\label{veL2:ddtueLq}
    \sgn(q-1)\ddt \io \ue^q
    + \frac{q|q-1|}{4}\io \ue^{q-2}|\nabla \ue|^2
    \le c_1 \io |\nabla\ve|^2 + c_1
\end{align}
in $(0, \Tmaxe)$ for any $\ep\in (0,1)$.
Moreover, by Lemma~\ref{Lem:veL2Func},
there exists a constant $c_2 > 0$ such that
\[
    (c_1 + 1) \ddt\io \ve^2 + (c_1 + 1)\io \ve^2
    + (c_1 + 1)\io |\nabla \ve|^2
    \le \frac{q|q-1|}{8} \io \ue^{q-2}|\nabla\ue|^2
    + c_2
\]
in $(0, \Tmaxe)$ for all $\ep \in (0,1)$, whence
combining this with \eqref{veL2:ddtueLq} provides $c_3 > 0$
such that
\begin{align}
\nonumber
    &\ddt\left[\sgn(q-1)\io\ue^q + (c_1+1) \io \ve^2\right]
\\ \nonumber
    &\quad\, + \frac{q|q-1|}{8} \io \ue^{q-2}|\nabla\ue|^2
    + (c_1 + 1) \io \ve^2 + \io |\nabla \ve|^2
\\ 
    &\quad\, \le c_3
    \label{veL2:entropy1}
\end{align}
in $(0, \Tmaxe)$ for all $\ep \in (0,1)$.
Now if $q < 1$, by means of the Poincar\'{e}--Wirtinger inequality,
the H\"{o}lder inequality, \eqref{local_App:mass} and
\eqref{bound:uie}, we find $c_3, c_4 > 0$ such that
\begin{align}
\nonumber
    \io \ue^{q-2} |\nabla\ue|^2
    &\ge c_3 \io \ue^q - \frac{c_3}{|\Omega|}
      \left(\io \ue^{\frac{q}{2}}\right)^2
\\ \nonumber
    &\ge c_3 \io \ue^q - c_3 |\Omega|^{1-q} \left(\io \ue\right)^q
\\
    &\ge c_3 \io \ue^q - c_4
    \label{veL2:entropy2}
\end{align}
in $(0, \Tmaxe)$ for every $\ep \in (0,1)$.
On the other hand, when $q > 1$,
we invoke the Gagliardo--Nirenberg inequality
(Lemma~\ref{Lem:GN1}~\eqref{GN1-1}) along with the Young inequality,
\eqref{local_App:mass} and \eqref{bound:uie} to obtain $c_5 > 0$
such that
\begin{align}\label{veL2:entropy3}
    \io \ue^{q-2}|\nabla \ue|^2 \ge c_3 \io \ue^q - c_5
\end{align}
in $(0,\Tmaxe)$ for all $\ep \in (0,1)$.
Therefore, if we define
\[
    \mathcal{F}_{1,\ep}(t) := \sgn(q-1) \io \ue^q(\cdot,t)
      + (c_1+1) \io \ve^2(\cdot,t)
    \quad\mbox{for}\ t \in (0,\Tmaxe)\ \mbox{and}\ 
      \ep \in (0,1)
\]
and let $c_6 := \min\{\frac{q|q-1|}{8}c_3, 1\} > 0$,
it follows from
\eqref{veL2:entropy1}, \eqref{veL2:entropy2}
and \eqref{veL2:entropy3} that there exists $c_7 > 0$ satisfying
\[
    \ddt\mathcal{F}_{1,\ep} + c_6 \mathcal{F}_{1,\ep}
    + \io |\nabla \ve|^2 \le c_7
\]
in $(0, \Tmaxe)$ for all $\ep \in (0,1)$.
In light of \eqref{bound:uie} and \eqref{bound:vie},
this thereby entails \eqref{veL2:veL2}.
\end{proof}

We now go back to perform a standard $L^q$ testing
argument to the first equation in \eqref{Sys:App}
to assert bounds on $\ue$ in $L^q$ spaces for
any $q > 1$.
Unlike in Lemma~\ref{Lem:ueLqFunc1},
we will rely on the quantity
$\|\nabla \ve\|_{L^r(\Omega)}^{\kappa}$
with some $r, \kappa > 0$
instead of $\io |\nabla\ve|^2$,
which enables us to choose $q > 1$ arbitrary.

%
%
\begin{lem}\label{Lem:ueLqFunc2}
Let $n\in\N$,
$p > 1$ and $q > 1$, and suppose that $r > 0$ fulfills
\begin{align}\label{ueLqFunc2:r}
    r > \max\{2, n\}(p-1).
\end{align}
Then there exists a constant $C > 0$ such that
\begin{align}\label{ueLqFunc2:ddtueLq}
    \ddt \io\ue^q(\cdot,t) 
    + \frac{q(q-1)}{4} \io \ue^{q-2}(\cdot,t)|\nabla \ue(\cdot,t)|^2
    \le C \|\nabla \ve(\cdot,t)\|_{L^r(\Omega)}^{2a^{\ast}(p-1)} + C
\end{align}
for all $t \in (0, \Tmaxe)$ and $\ep \in (0,1)$, where
\begin{align}\label{ueLqFunc2:aStar}
    a^{\ast} := \frac{r(nq+2-n)}{2(r-n(p-1))} > 1.
\end{align}
\end{lem}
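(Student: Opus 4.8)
The plan is to perform a standard $L^q$ testing argument on the first equation of \eqref{Sys:App}. First I would multiply that equation by $q\ue^{q-1}$ and integrate by parts over $\Omega$, using the homogeneous Neumann conditions, to reach
\[
    \ddt \io \ue^q + q(q-1)\io \ue^{q-2}|\nabla\ue|^2
    = q\chi(q-1)\io \ue^{q-1}(|\nabla\ve|^2+\ep)^{\frac{p-2}{2}}
      \nabla\ue\cdot\nabla\ve .
\]
Employing the pointwise bound $(|\nabla\ve|^2+\ep)^{\frac{p-2}{2}}|\nabla\ve| \le (|\nabla\ve|^2+\ep)^{\frac{p-1}{2}}$ and the Young inequality, I would absorb half of the dissipation term; combined with the elementary estimate $(|\nabla\ve|^2+\ep)^{p-1} \le C_p(|\nabla\ve|^{2(p-1)}+1)$, valid for all $\ep\in(0,1)$ since $p>1$, this leaves
\[
    \ddt \io \ue^q + \frac{q(q-1)}{2}\io \ue^{q-2}|\nabla\ue|^2
    \le c_1 \io \ue^q |\nabla\ve|^{2(p-1)} + c_1 \io \ue^q
\]
for some $c_1>0$ depending only on $q$, $\chi$ and $p$.

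The heart of the argument is the estimate of $\io \ue^q|\nabla\ve|^{2(p-1)}$. I would apply the H\"older inequality with exponents $\frac{r}{r-2(p-1)}$ and $\frac{r}{2(p-1)}$ --- admissible precisely because $r>2(p-1)$ --- to factor out $\|\nabla\ve\|_{L^r(\Omega)}^{2(p-1)}$ and be left with $\|\ue^{q/2}\|_{L^{2r/(r-2(p-1))}(\Omega)}^2$. To the latter I apply the Gagliardo--Nirenberg inequality of Lemma~\ref{Lem:GN1} with $\psf=\frac{2r}{r-2(p-1)}$, $\qsf=\frac{2}{q}$ and $\rsf=2$; its interpolation exponent computes to $a=\frac{n((q-1)r+2(p-1))}{r(nq-n+2)}$, and one checks that $a<1$ holds exactly because $r>n(p-1)$. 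The $L^{2/q}$-norm arising there is $\|\ue^{q/2}\|_{L^{2/q}(\Omega)}^2=\io\ue=\io\uie$, which is bounded uniformly in $\ep$ and $t$ by the mass identity \eqref{local_App:mass} together with \eqref{bound:uie}. A further Young inequality (with exponents $\frac{1}{a}$ and $\frac{1}{1-a}$) then absorbs the resulting power of $\|\nabla\ue^{q/2}\|_{L^2(\Omega)}^2=\frac{q^2}{4}\io\ue^{q-2}|\nabla\ue|^2$ into the dissipation, at the cost of the term $\|\nabla\ve\|_{L^r(\Omega)}^{2(p-1)/(1-a)}$, and a short computation gives $\frac{1}{1-a}=\frac{r(nq-n+2)}{2(r-n(p-1))}=a^{\ast}$, so this exponent equals exactly $2a^{\ast}(p-1)$. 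The remaining term $c_1\io\ue^q$ is treated by the same Gagliardo--Nirenberg-plus-Young scheme (now with $\psf=2$, interpolation exponent $\frac{n(q-1)}{n(q-1)+2}<1$) and gets absorbed into the dissipation up to an additive constant.

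Finally I would collect the pieces: choosing the Young parameters small enough lowers the dissipation coefficient from $\frac{q(q-1)}{2}$ to $\frac{q(q-1)}{4}$, and what is left on the right-hand side is $C\|\nabla\ve\|_{L^r(\Omega)}^{2a^{\ast}(p-1)}+C\|\nabla\ve\|_{L^r(\Omega)}^{2(p-1)}+C$. Since $q>1$ together with $r>n(p-1)$ forces $a^{\ast}>1$ (this being the inequality claimed in \eqref{ueLqFunc2:aStar}), a last Young inequality replaces $\|\nabla\ve\|_{L^r(\Omega)}^{2(p-1)}$ by $\|\nabla\ve\|_{L^r(\Omega)}^{2a^{\ast}(p-1)}+1$, which yields \eqref{ueLqFunc2:ddtueLq}. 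The only genuinely delicate part is the bookkeeping of exponents: verifying $a<1$ and $a^{\ast}>1$ and matching $\frac{2(p-1)}{1-a}$ with $2a^{\ast}(p-1)$. I expect the hypothesis $r>\max\{2,n\}(p-1)$ in \eqref{ueLqFunc2:r} to be forced precisely by the two needs $r>2(p-1)$ (for the H\"older splitting) and $r>n(p-1)$ (for the absorption).
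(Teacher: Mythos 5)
Your proposal is correct and follows essentially the same route as the paper: testing with $\ue^{q-1}$, Young's inequality, the H\"older splitting with exponents $\frac{r}{2(p-1)}$ and $\frac{r}{r-2(p-1)}$, Gagliardo--Nirenberg interpolation of $\|\ue^{q/2}\|_{L^{2r/(r-2(p-1))}(\Omega)}$ against the dissipation and the conserved mass, and the same exponent bookkeeping identifying $\frac{2(p-1)}{1-a}$ with $2a^{\ast}(p-1)$. Your slightly more careful treatment of the regularized factor $(|\nabla\ve|^2+\ep)^{p-1}$, which produces the additional harmless terms $c_1\io\ue^q$ and $\|\nabla\ve\|_{L^r(\Omega)}^{2(p-1)}$ that you then absorb, is a minor refinement rather than a different argument.
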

\begin{proof}
Similarly as in Lemma~\ref{Lem:ueLqFunc1},
we test the first equation in \eqref{Sys:App} by $\ue^{q-1}$ and
employ the Young inequality to see that
\begin{align}\label{ueLqFunc2:ineq2}
    \frac{1}{q} \ddt \io \ue^q 
    + \frac{q-1}{2} \io \ue^{q-2} |\nabla\ue|^2
    \le \frac{\chi^2(q-1)}{2} \io \ue^q |\nabla \ve|^{2(p-1)}
\end{align}
in $(0, \Tmaxe)$ for all $\ep \in (0,1)$.
Moreover,
since $r > 2(p-1)$, the H\"{o}lder inequality warrants that
\begin{align}
\nonumber
    \io \ue^q |\nabla \ve|^{2(p-1)}
    &\le \left(\io |\nabla \ve|^r\right)^{\frac{2(p-1)}{r}}
      \left(\io \ue^{\frac{qr}{r-2(p-1)}}\right)^{
        \frac{r-2(p-1)}{r}}
\\
    &= \|\nabla \ve\|_{L^r(\Omega)}^{2(p-1)}
      \big\|\ue^{\frac{q}{2}}\big\|_{
        L^{\frac{2r}{r-2(p-1)}}(\Omega)}^2
        \label{ueLqFunc2:ineq1}
\end{align}
in $(0, \Tmaxe)$ for each $\ep \in (0,1)$.
Now thanks to the fact that $r > n(p-1)$,
we can apply the Gagliardo--Nirenberg inequality
(Lemma~\ref{Lem:GN1}~\eqref{GN1-1}) to obtain $c_1 > 0$ such that
\[
    \big\|\ue^{\frac{q}{2}}\big\|_{
        L^{\frac{2r}{r-2(p-1)}}(\Omega)}^2
    \le c_1 \left(\io \ue^{q-2}|\nabla \ue|^2\right)^{a_1}
     \|\ue\|_{L^1(\Omega)}^{q(1-a_1)}
     + c_1 \|\ue\|_{L^1(\Omega)}^{q}
\]
in $(0, \Tmaxe)$ for all $\ep \in (0,1)$, where
$a_1 := \frac{\frac{q}{2}-\frac{r-2(p-1)}{2r}}
{\frac{q}{2}+\frac{1}{n}-\frac{1}{2}} \in (0,1)$.
In conjunction with \eqref{ueLqFunc2:ineq1},
\eqref{local_App:mass}, \eqref{bound:uie}
and the Young inequality,
this entails the existence of $c_2, c_3 > 0$ such that
\begin{align}
\nonumber
    \io \ue^q |\nabla \ve|^{2(p-1)}
    &\le \frac{1}{2\chi^2} \io \ue^{q-2} |\nabla\ue|^2
    + c_2 \|\nabla \ve\|_{L^r(\Omega)}^{\frac{2(p-1)}{1-a_1}}
    + c_2 \|\nabla \ve\|_{L^r(\Omega)}^{2(p-1)}
\\
    &\le \frac{1}{2\chi^2} \io \ue^{q-2} |\nabla\ue|^2
    + c_3 \|\nabla \ve\|_{L^r(\Omega)}^{\frac{2(p-1)}{1-a_1}}
    + c_3
    \label{ueLqFunc2:ineq3}
\end{align}
in $(0, \Tmaxe)$ for all $\ep \in (0,1)$.
Since
$1-a_1 = \frac{\frac{1}{n}-\frac{p-1}{r}}
{\frac{q}{2}+\frac{1}{n}-\frac{1}{2}}
=\frac{2(r-n(p-1))}{r(nq+2-n)}$ holds,
a combination of \eqref{ueLqFunc2:ineq2} and
\eqref{ueLqFunc2:ineq3} thus proves \eqref{ueLqFunc2:ddtueLq}
with $C = \frac{\chi^2 q (q-1)}{2} c_3$.
\end{proof}

%
%
\begin{remark}\label{Rem:r2}
Let us emphasize that if $p>1$ and $\theta > 1$ are as in
the assumption \eqref{veL2:p} in
Lemma~\ref{Lem:veL2},
then $r = 2$ actually satisfies \eqref{ueLqFunc2:r}.
Indeed, this is obvious when $n=1$, because
$2(p-1)<2$.
If $n\ge 2$, we observe that the condition
$p < \frac{n\theta}{n\theta-1}$ with $\theta>1$ warrants that
$n(p-1) < \frac{n}{n\theta-1}<\frac{n}{n-1}\le 2$.

In particular, by Lemma~\ref{Lem:ueLqFunc2} we know that
if $p > 1$ and $\theta > 1$ satisfy
\eqref{veL2:p},
then for any $q > 1$ there exists a constant $C > 0$ such that
\begin{align}
\notag
    &\ddt \io \ue^q(\cdot,t)
    + \frac{q(q-1)}{4}\io \ue^{q-2}(\cdot,t)|\nabla \ue(\cdot,t)|^2
\\
    &\quad\, \le C\left(\io |\nabla \ve(\cdot,t)|^2\right)^{
      \frac{(nq+2-n)(p-1)}{2-n(p-1)}} + C
      \label{remark:ueLq}
\end{align}
for all $t \in (0,\Tmaxe)$ and $\ep \in (0,1)$.
\end{remark}

In light of \eqref{remark:ueLq},
to establish $L^q$ bounds for $\ue$
it will be important to derive
$L^2$ estimates for $\nabla \ve$.
To achieve this we begin with an essential
differential inequality for the function $\io |\nabla \ve|^2$
in the following lemma.

%
%
\begin{lem}\label{Lem:vexL2Func}
Let $n\in\N$, and suppose that
$\theta > 1$ and $q > 1$ are such that
\begin{align}\label{vexL2Func:q}
    q > 2\theta - \frac{2}{n}
\end{align}
holds.
Then for any $\eta > 0$ there exists a constant $C>0$ fulfilling
\begin{align}
\nonumber
    &\ddt\io|\nabla \ve(\cdot,t)|^2
    +2\io |\nabla \ve(\cdot,t)|^2
    + \io |\Delta \ve(\cdot,t)|^2
\\
    &\quad\, \le \eta \io \ue^{q-2}(\cdot,t)|\nabla \ue(\cdot,t)|^2
    + C
    \label{vexL2Func:ddtvexL2}
\end{align}
for all $t \in (0,\Tmaxe)$ and $\ep \in (0,1)$.
\end{lem}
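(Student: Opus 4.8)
The plan is to test the second equation in \eqref{Sys:App} against $-\Delta \ve$ in order to produce the dissipation term $\io |\Delta \ve|^2$ together with a reaction contribution involving $\ue^{\theta}$. Concretely, multiplying $(\ve)_t = \Delta \ve - \ve + \ue^{\theta}$ by $-\Delta \ve$ and integrating over $\Omega$ (using $\nabla \ve \cdot \nu = 0$ to justify the integration by parts $\io (\ve)_t (-\Delta \ve) = \frac12 \ddt \io |\nabla \ve|^2$ and $\io (-\ve)(-\Delta \ve) = -\io |\nabla \ve|^2$) yields
\[
    \frac12 \ddt \io |\nabla \ve|^2 + \io |\Delta \ve|^2 + \io |\nabla \ve|^2
    = -\io \ue^{\theta} \Delta \ve.
\]
Then I would bound the right-hand side by the Young inequality as $-\io \ue^{\theta}\Delta \ve \le \frac14 \io |\Delta \ve|^2 + \io \ue^{2\theta}$, so that, after absorbing and multiplying by a harmless constant, it remains to control $\io \ue^{2\theta} = \|\ue^{q/2}\|_{L^{4\theta/q}(\Omega)}^{4\theta/q}$ by the ``good'' term $\io \ue^{q-2}|\nabla \ue|^2$ up to an arbitrarily small factor $\eta$ and a constant.

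The second step is precisely this reduction. Since $q > 2\theta - \frac{2}{n}$ in \eqref{vexL2Func:q} — which also forces $q > 1 > 1 - \frac2n$, so the hypotheses of Lemma~\ref{Lem:GN1}~\eqref{GN1-1} are met with the parameters $\psf = \frac{4\theta}{q}$, $\qsf = \frac2q$, $\rsf = 2$, $\ssf = \frac2q$ — the Gagliardo--Nirenberg inequality gives a constant $c_1 > 0$ with
\[
    \io \ue^{2\theta} = \big\|\ue^{q/2}\big\|_{L^{4\theta/q}(\Omega)}^{4\theta/q}
    \le c_1 \big\|\nabla \ue^{q/2}\big\|_{L^2(\Omega)}^{\frac{4a_1\theta}{q}}
    \big\|\ue^{q/2}\big\|_{L^{2/q}(\Omega)}^{\frac{4(1-a_1)\theta}{q}}
    + c_1 \big\|\ue^{q/2}\big\|_{L^{2/q}(\Omega)}^{4\theta/q},
\]
where $a_1 := \frac{\frac q2 - \frac{q}{4\theta}}{\frac q2 + \frac1n - \frac12} \in (0,1)$; rewriting $\|\nabla \ue^{q/2}\|_{L^2(\Omega)}^2 = \frac{q^2}{4}\io \ue^{q-2}|\nabla \ue|^2$ and $\|\ue^{q/2}\|_{L^{2/q}(\Omega)}^{2/q} = \io \ue = \io \uie$, which is bounded independently of $\ep$ and $t$ by \eqref{local_App:mass} and \eqref{bound:uie}. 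The key algebraic point is that $q > 2\theta - \frac2n$ makes the exponent on the gradient term strictly less than one, namely
\[
    \frac{2a_1\theta}{q}
    = \frac{\theta - \frac12}{\frac q2 + \frac1n - \frac12}
    < \frac{\theta - \frac12}{\frac12\big(2\theta - \frac2n\big) + \frac1n - \frac12}
    = \frac{\theta - \frac12}{\theta - \frac12} = 1,
\]
so Young's inequality lets me trade the $\big(\io \ue^{q-2}|\nabla \ue|^2\big)^{2a_1\theta/q}$ factor for $\eta \io \ue^{q-2}|\nabla \ue|^2$ plus a constant depending on $\eta$, $\chi$, $q$, $\theta$, $n$, $\Omega$ and $U_1\|u_0\|_{L^1(\Omega)}$. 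Combining this with the identity from the first step proves \eqref{vexL2Func:ddtvexL2}.

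I do not expect any serious obstacle here: the computation is entirely parallel to the one in Lemma~\ref{Lem:veL2Func}, the only new ingredient being the use of the test function $-\Delta \ve$ (legitimate because $\ve(\cdot,t) \in W^{2,2}_{\rm N}(\Omega)$ for $t \in (0,\Tmaxe)$ by the regularity in \eqref{local_App:reg}) rather than $\ve$ itself. The one mild subtlety worth a sentence is verifying that the exponent $\frac{4\theta}{q}$ is admissible as the target Lebesgue exponent in Lemma~\ref{Lem:GN1} — i.e.\ that $\frac{2}{q} \le \frac{4\theta}{q}$, which is immediate since $\theta > 1 > \frac12$, and that $a_1 \ge 0$, which follows from $\theta > \frac12$ as well — so that the interpolation is genuinely between $L^{2/q}$ and $W^{1,2}$ and no degenerate case arises. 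The rest is the routine Young-inequality bookkeeping that I would leave to the reader or dispatch in one line.
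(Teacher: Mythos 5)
Your proposal is correct and follows essentially the same route as the paper: testing the second equation against $-\Delta \ve$, applying the Gagliardo--Nirenberg inequality to $\io \ue^{2\theta}=\|\ue^{q/2}\|_{L^{4\theta/q}(\Omega)}^{4\theta/q}$ with the same interpolation exponent $a_1$, verifying $\frac{2a_1\theta}{q}<1$ from \eqref{vexL2Func:q}, and concluding via Young's inequality together with \eqref{local_App:mass} and \eqref{bound:uie}. The only differences are cosmetic (your Young split $\frac14$ versus the paper's $\frac12$, and the harmless remark about constants), so no further comment is needed.
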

\begin{proof}
Testing the second equation in \eqref{Sys:App} against
$-\Delta \ve$ with an application of the Young inequality
guarantees that
\begin{align}
\nonumber
    \frac{1}{2} \ddt \io |\nabla\ve|^2
    + \io |\Delta \ve|^2 + \io |\nabla\ve|^2
    &= - \io \ue^{\theta} \Delta\ve
\\
    &\le \frac{1}{2} \io |\Delta\ve|^2 + \frac{1}{2}\io \ue^{2\theta}
    \label{vexL2Func:ineq1}
\end{align}
in $(0,\Tmaxe)$ for all $\ep \in (0,1)$,
where the Gagliardo--Nirenberg inequality
(Lemma~\ref{Lem:GN1}~\eqref{GN1-1}) enables us to pick $c_1 > 0$ satisfying
\begin{align}
\nonumber
    \io \ue^{2\theta} &= \big\|\ue^{\frac{q}{2}}\big\|_{
      L^{\frac{4\theta}{q}}(\Omega)}^{
      \frac{4\theta}{q}}
\\
    &\le c_1 \left(\io \ue^{q-2}|\nabla\ue|^2\right)^{
      \frac{2\theta a_1}{q}}
      \|\ue\|_{L^1(\Omega)}^{2\theta(1-a_1)}
    + c_1 \|\ue\|_{L^1(\Omega)}^{2\theta}
    \label{vexL2Func:ineq2}
\end{align}
in $(0, \Tmaxe)$ for all $\ep \in (0,1)$
with $a_1 := \frac{\frac{q}{2}-\frac{q}{4\theta}}
{\frac{q}{2}+\frac{1}{n}-\frac{1}{2}}\in(0,1)$.
Since the hypothesis \eqref{vexL2Func:q} asserts that
\[
    \frac{2\theta a_1}{q} 
    = \frac{\theta-\frac{1}{2}}{\frac{q}{2}+\frac{1}{n}-\frac{1}{2}}
    < \frac{\theta-\frac{1}{2}}
    {\frac{1}{2}(2\theta-\frac{2}{n})+\frac{1}{n}-\frac{1}{2}}
    = 1,
\]
through the Young inequality
a combination of \eqref{vexL2Func:ineq2} with
\eqref{local_App:mass} and \eqref{bound:uie} shows that
for each $\eta > 0$, one can find $c_2 > 0$ such that
\[
    \io \ue^{2\theta} \le \eta \io \ue^{q-2}|\nabla \ue|^2
    + c_2
\]
in $(0, \Tmaxe)$ for all $\ep \in (0,1)$.
Therefore, together with \eqref{vexL2Func:ineq1}
this confirms \eqref{vexL2Func:ddtvexL2} with $C=c_2$.
\end{proof}

The form of the first integral on the right of
\eqref{vexL2Func:ddtvexL2} suggests that
the coupled quantities
$\io \ue^q + C\io |\nabla\ve|^2$ with some $C>0$
play the role of entropy-like functionals,
provided that $q > 1$ lies within appropriate ranges.
In Lemma~\ref{Lem:ueLqFunc1} we already established
the differential inequality \eqref{ueLqFunc1:ddtueLq}
for $\io \ue^q$, however,
this does not contain the integral $\io |\Delta\ve|^2$,
which would be possible to improve.
As a key step in this direction, we shall focus on the
intermediate inequality \eqref{ueLqFunc2:ddtueLq}
already obtained in Lemma~\ref{Lem:ueLqFunc2},
and derive further estimates involving $\Delta \ve$
in the space $L^2(\Omega)$ for the term
$\|\nabla\ve\|_{L^r(\Omega)}^{\kappa}$.

%
%
\begin{lem}\label{Lem:vexLr}
Let $n \in \N$,
and suppose that $p > 1$ and $q > 1$ satisfy
\begin{align}\label{vexLr:pq}
    \begin{cases}
        p \in \left(1, \dfrac{n+6}{n+2}\right)
        \quad\mbox{and}\quad
        q \in \left(1, \dfrac{n^2 p - n^2-4p + 12}
          {n^2 p - n^2+2np-2n}\right)
        & \mbox{if}\ n = 1, 2,
    \\[3mm]
        p \in \left(1, \dfrac{n+2}{n}\right)
        \quad\mbox{and}\quad
        q \in \left(1, \dfrac{2}{n(p-1)}\right)
        & \mbox{if}\ n \ge 3.
    \end{cases}
\end{align}
Then one can find $r > 0$ fulfilling \eqref{ueLqFunc2:r} and
\begin{align}\label{vexLr:r}
    \begin{cases}
        r \in [2, \infty) & \mbox{if}\ n=1,2,
        \\[1mm]
        r \in \left[2, \dfrac{2n}{n-2}\right)
        & \mbox{if}\ n \ge 3
    \end{cases}
\end{align}
as well as
\begin{align}\label{vexLr:bStar}
    2a^{\ast} \cdot b_{\ast} (p-1) < 2,
\end{align}
where $a^{\ast} > 1$ is defined by \eqref{ueLqFunc2:aStar},
and where
\[
    b_{\ast} := \frac{\frac{1}{2}+\frac{1}{n}-\frac{1}{r}}
    {\frac{2}{n}} \in \left[\frac{1}{2}, 1\right).
\]
Furthermore, for all $\eta > 0$ there exists a constant $C>0$
such that
\begin{align}
\notag
    &
    \|\nabla\ve(\cdot,t)\|_{L^r(\Omega)}^{
      2a^{\ast}(p-1)}
\\
    &\quad\,
    \le\eta \io |\Delta \ve(\cdot,t)|^2
    + C\left(\io \ve^2(\cdot,t)\right)^{
      \frac{1}{2-2a^{\ast}\cdot b_{\ast}(p-1)}}
    + C\left(\io \ve^2(\cdot,t)\right)^{a^{\ast}(p-1)}
    \label{vexLr:vexLr}
\end{align}
for all $t \in (0, \Tmaxe)$ and $\ep \in (0,1)$.
\end{lem}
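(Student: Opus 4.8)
The strategy is to bound $\|\nabla \ve\|_{L^r(\Omega)}$ by interpolation between the second-order quantity $\|\Delta \ve\|_{L^2(\Omega)}$ and the low-order quantity $\|\ve\|_{L^2(\Omega)}$, using Lemma~\ref{Lem:GN2}, and then to absorb the $\|\Delta \ve\|_{L^2}$ contribution via Young's inequality since the total power appearing is strictly below $2$. First I would verify the selection of $r$. The constraints to meet simultaneously are: $r > \max\{2,n\}(p-1)$ from \eqref{ueLqFunc2:r}; the range \eqref{vexLr:r} so that $W^{1,2}(\Omega) \embd L^r$ (needed when $n \ge 3$, automatic for $n = 1,2$) and so that $b_{\ast} \in [\frac12,1)$; and the crucial smallness \eqref{vexLr:bStar}. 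One checks that as $r \dwto \max\{2, n(p-1)\}$ the exponent $a^{\ast}$ defined in \eqref{ueLqFunc2:aStar} stays finite, while $b_{\ast}$ decreases; a direct computation of the product $2a^{\ast} b_{\ast}(p-1)$ as a function of $r$ shows that the stated bounds on $q$ in \eqref{vexLr:pq} (separately for $n = 1,2$ and $n \ge 3$) are exactly what guarantees one can pick $r$ satisfying all of these at once. This bookkeeping — matching the algebraic condition \eqref{vexLr:bStar} against the admissible $q$-interval — I expect to be the main technical obstacle, since the formula for the upper bound on $q$ in the case $n = 1,2$ is unwieldy and must be derived rather than guessed.

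Second, with $r$ fixed, I would apply Lemma~\ref{Lem:GN2} with $\psf = r$, $\qsf = 2$, $\rsf = 2$, $\ssf = 2$ to the function $\ve$ (noting $\ve(\cdot,t) \in W^{2,2}_{\rm N}(\Omega)$ by the regularity in Lemma~\ref{Lem:local_App}), obtaining
\[
    \|\nabla \ve\|_{L^r(\Omega)}
    \le c_1 \bigl(\|\Delta \ve\|_{L^2(\Omega)}^{b_{\ast}}
      + \|\ve\|_{L^2(\Omega)}^{b_{\ast}}\bigr)
    \|\ve\|_{L^2(\Omega)}^{1-b_{\ast}}
    + c_1 \|\ve\|_{L^2(\Omega)},
\]
where one checks that the exponent $b$ of Lemma~\ref{Lem:GN2} evaluated at these parameters equals $b_{\ast}$, and that $b_{\ast} \ge \frac12$ is precisely the hypothesis $\frac12 + \frac1n - \frac1r \ge \frac1n$, i.e. $r \ge 2$. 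Raising both sides to the power $2a^{\ast}(p-1)$ and expanding (using $(x+y)^s \lesssim x^s + y^s$), the dominant term is
\[
    \|\Delta \ve\|_{L^2(\Omega)}^{2a^{\ast} b_{\ast}(p-1)}
    \cdot \|\ve\|_{L^2(\Omega)}^{2a^{\ast}(1-b_{\ast})(p-1)},
\]
while the remaining terms are pure powers of $\|\ve\|_{L^2(\Omega)}$; collecting the latter, the largest such exponent works out to $2a^{\ast}(p-1) \cdot \max\{\,\cdot\,\} = 2a^{\ast}(p-1)$ up to the bounded prefactors, giving the term $(\io \ve^2)^{a^{\ast}(p-1)}$ in \eqref{vexLr:vexLr}.

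Third, to the dominant mixed term I would apply Young's inequality with exponents $\frac{1}{a^{\ast} b_{\ast}(p-1)}$ and its conjugate: since \eqref{vexLr:bStar} says $a^{\ast} b_{\ast}(p-1) < 1$, the first exponent is genuinely larger than $1$, so for any prescribed $\eta > 0$ we get
\[
    \|\Delta \ve\|_{L^2(\Omega)}^{2a^{\ast} b_{\ast}(p-1)}
    \|\ve\|_{L^2(\Omega)}^{2a^{\ast}(1-b_{\ast})(p-1)}
    \le \eta \io |\Delta \ve|^2
    + C \|\ve\|_{L^2(\Omega)}^{\frac{2a^{\ast}(1-b_{\ast})(p-1)}{1-a^{\ast} b_{\ast}(p-1)}},
\]
and a short simplification of the last exponent — using $a^{\ast}(p-1)(1-b_{\ast}) = a^{\ast}(p-1) - a^{\ast} b_{\ast}(p-1)$ in numerator and denominator — reduces it to $\frac{1}{1 - a^{\ast} b_{\ast}(p-1)} \cdot$ (a factor that I will need to check equals $1$, or absorb into the constant), matching the first displayed term $(\io \ve^2)^{1/(2 - 2a^{\ast} b_{\ast}(p-1))}$ of \eqref{vexLr:vexLr} after one more rewriting of the denominator $1 - a^{\ast} b_{\ast}(p-1) = \tfrac12(2 - 2a^{\ast} b_{\ast}(p-1))$. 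Combining the three pieces and relabeling constants completes the proof. The only place requiring genuine care beyond routine manipulation is, as noted, confirming that the $q$-ranges in \eqref{vexLr:pq} are sharp enough to realize \eqref{vexLr:bStar} together with \eqref{ueLqFunc2:r} and \eqref{vexLr:r}; the rest is Gagliardo--Nirenberg plus Young.
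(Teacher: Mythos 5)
The second half of your plan coincides with the paper's argument: Lemma~\ref{Lem:GN2} is applied with $\psf=r$, $\qsf=\rsf=\ssf=2$, so that the interpolation exponent is exactly $b_{\ast}$, and then Young's inequality is used on the mixed term, exploiting $2a^{\ast}b_{\ast}(p-1)<2$. Your unease about the precise power of $\io\ve^2$ in \eqref{vexLr:vexLr} is a fair observation, but the paper glosses over the same bookkeeping, and it is harmless where the lemma is later used, since $\io\ve^2$ is already known to be bounded.

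The genuine gap is in the first half, which is the actual content of the lemma: you never carry out the selection of $r$, and the direction you hint at (send $r$ down to the lower end of its range because $b_{\ast}$ decreases there) is the wrong one. Indeed
\[
    2a^{\ast}\, b_{\ast}(p-1)
    = \frac{(nq+2-n)\,\big((n+2)r-2n\big)\,(p-1)}{4\,\big(r-n(p-1)\big)},
\]
so at $r=2$, where $b_{\ast}=\tfrac12$ is minimal, the requirement \eqref{vexLr:bStar} becomes $q<\frac{4-(n+2)(p-1)}{n(p-1)}$; when $p$ approaches its upper limit in \eqref{vexLr:pq} this threshold drops below $1$ (for $n\ge3$ it tends to $1-\frac{2}{n}$, for $n=2$ to $0$, and for $n=1$ with $p$ near $\frac73$ the analogous computation at the lower endpoint $r=2(p-1)$ fails as well), so near the lower end of the admissible range no $q>1$ survives: the smallness of $b_{\ast}$ is outweighed by the growth of $a^{\ast}$ through the factor $\frac{1}{2-n(p-1)}$. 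The paper argues in the opposite direction: it lets $r$ tend to the upper end of \eqref{vexLr:r} ($r\to\infty$ if $n=1,2$, $r\upto\frac{2n}{n-2}$ if $n\ge3$), notes that the largest admissible $q$ then converges precisely to the bounds stated in \eqref{vexLr:pq}, fixes such an $r$ for the given $p,q$ (this also secures \eqref{ueLqFunc2:r}), and then verifies \eqref{vexLr:bStar} via the algebraic chain $nq+2-n<\frac{8(r-n(p-1))}{(p-1)\left((n+2)r-2n\right)}$. Without this computation your proposal establishes neither the existence of $r$ nor \eqref{vexLr:bStar}, which is exactly the nontrivial step you yourself flagged as pending.
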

\begin{proof}
A straightforward computation implies that
\begin{gather*}
    \frac{(n+6)\xi + 2n}{(n+2)\xi + 2n} \upto
    \frac{n+6}{n+2}
\\ \intertext{and}
    \frac{(n^2 p-n^2-4p+12)\xi -2n^2 p+2n^2-4np+4n}
    {(n^2 p-n^2+2np-2n)\xi -2n^2 p+2n^2}
    \upto
    \frac{n^2 p - n^2-4p + 12}
          {n^2 p - n^2+2np-2n}
\end{gather*}
as $\xi \upto \infty$, and if furthermore $n\ge 3$, then
\begin{gather*}
    \frac{(n+6)\xi + 2n}{(n+2)\xi + 2n} \upto
    \frac{n+2}{n}
\\ \intertext{as well as}
    \frac{(n^2 p-n^2-4p+12)\xi -2n^2 p+2n^2-4np+4n}
    {(n^2 p-n^2+2np-2n)\xi -2n^2 p+2n^2}
    \upto \frac{2}{n(p-1)}
\end{gather*}
as $\xi \upto \frac{2n}{n-2}$.
These four properties would entail that
there exists $r \ge 2$ satisfying both \eqref{ueLqFunc2:r}
and \eqref{vexLr:r} which is such that
$p < \frac{(n+6)r+2n}{(n+2)r+2n}$ and
\begin{align}\label{vexLr:q}
    q < \frac{(n^2 p-n^2-4p+12)r -2n^2 p+2n^2-4np+4n}
    {(n^2 p-n^2+2np-2n)r -2n^2 p+2n^2}.
\end{align}
Keeping those numbers $p,q,r$, we claim that
$2a^{\ast} \cdot b_{\ast}(p-1) < 2$ holds.
Indeed, according to \eqref{vexLr:q} we have
\begin{align*}
    nq + 2 - n
    &< \frac{(-4p+12-2np+2n)rn-4n^2 p+4n^2}
      {(n^2 p-n^2+2np-2n)r - 2n^2 p+2n^2} + 2
\\
    &= \frac{8nr-8n^2 p +8n^2}
    {(n^2 p-n^2+2np-2n)r - 2n^2 p+2n^2}
\\
    &= \frac{8(r-n(p-1))}{(p-1)(nr+2r-2n)},
\end{align*}
and, as a consequence thereof, we observe that
\begin{align*}
    2a^{\ast} \cdot b_{\ast}(p-1)
    &= \frac{r(nq+n-2)}{r-n(p-1)} \cdot
      \frac{\frac{1}{2}+\frac{1}{n}-\frac{1}{r}}{\frac{2}{n}}
      (p-1)
\\
    &< \frac{8(r-n(p-1))}{(p-1)(nr+2r-2n)} \cdot
    \frac{r(p-1)}{r-n(p-1)}\cdot
    \frac{nr+2r-2n}{4nr}
\\
    &= \frac{8}{4n}
\\
    &\le 2,
\end{align*}
which yields the claim.
Here since \eqref{vexLr:r} warrants that
$b_{\ast} \in [\frac{1}{2}, 1)$, the Gagliardo--Nirenberg inequality
(Lemma~\ref{Lem:GN2}) becomes applicable so as to provide
$c_1 > 0$ such that
\begin{align}\label{vexLr:GN}
    \|\nabla\ve\|_{L^r(\Omega)}^{2a^{\ast}(p-1)}
    \le c_1 \|\Delta\ve\|_{L^2(\Omega)}^{
      2a^{\ast} \cdot\,b_{\ast}(p-1)}
    \|\ve\|_{L^2(\Omega)}^{
      2a^{\ast}(1-b_{\ast})(p-1)}
    + c_1 \|\ve\|_{L^2(\Omega)}^{
      2a^{\ast}(p-1)}
\end{align}
in $(0, \Tmaxe)$ for any $\ep\in (0,1)$.
Thanks to the aforementioned claim
$2a^{\ast}\cdot b_{\ast}(p-1) < 2$,
in \eqref{vexLr:GN} we can thus apply the Young inequality
to establish the desired inequality.
\end{proof}

Having thus asserted inequalities
\eqref{ueLqFunc2:ddtueLq},
\eqref{vexL2Func:ddtvexL2} and \eqref{vexLr:vexLr},
we rely on the functionals
$\io \ue^q + \io |\nabla\ve|^2$
to derive $L^2$ estimates for $\nabla\ve$
under the assumption in Lemma~\ref{Lem:veL2}.

%
%
\begin{lem}\label{Lem:vexL2}
Let $n\in \N$,
and let $p > 1$ and $\theta > 1$
be as in \eqref{veL2:p}.
Then there is a constant $C>0$ satisfying
\begin{align}\label{vexL2:vexL2}
    \io |\nabla\ve(\cdot,t)|^2 \le C
\end{align}
for any $t \in (0,\Tmaxe)$ and $\ep \in (0,1)$.
\end{lem}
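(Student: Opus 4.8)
The strategy is to combine the three differential inequalities already at our disposal---inequality \eqref{ueLqFunc2:ddtueLq} from Lemma~\ref{Lem:ueLqFunc2} (more precisely its specialization \eqref{remark:ueLq} with $r=2$, valid by Remark~\ref{Rem:r2}), inequality \eqref{vexL2Func:ddtvexL2} from Lemma~\ref{Lem:vexL2Func}, and inequality \eqref{vexLr:vexLr} from Lemma~\ref{Lem:vexLr}---into a single absorbing differential inequality for the functional
\[
    \mathcal{F}_{2,\ep}(t) := \io \ue^q(\cdot,t) + \io |\nabla\ve(\cdot,t)|^2.
\]
First I would fix an admissible exponent $q>1$: one checks that the hypothesis \eqref{veL2:p} on $p$ and $\theta$ guarantees the existence of some $q>1$ satisfying simultaneously \eqref{vexL2Func:q} (i.e.\ $q>2\theta-\tfrac{2}{n}$), the range condition \eqref{vexLr:pq} needed for Lemma~\ref{Lem:vexLr}, and the smallness $\tfrac{(nq+2-n)(p-1)}{2-n(p-1)}<1$ that makes the right-hand side of \eqref{remark:ueLq} subquadratic in $\io|\nabla\ve|^2$; the verification is a direct computation analogous to the one at the start of the proof of Lemma~\ref{Lem:veL2}, using that $p<\tfrac{n\theta}{n\theta-1}$ forces $n(p-1)<2$ and more.

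With such a $q$ fixed, I would add \eqref{remark:ueLq} to a large constant multiple of \eqref{vexL2Func:ddtvexL2}. Choosing $\eta$ small in Lemma~\ref{Lem:vexL2Func} lets the term $\eta\io\ue^{q-2}|\nabla\ue|^2$ be absorbed into the good dissipation $\tfrac{q(q-1)}{4}\io\ue^{q-2}|\nabla\ue|^2$ coming from \eqref{remark:ueLq}. This produces, for some $c>0$,
\[
    \ddt\mathcal{F}_{2,\ep} + c\io\ue^{q-2}|\nabla\ue|^2 + \io|\nabla\ve|^2 + \tfrac12\io|\Delta\ve|^2
    \le C\Big(\io|\nabla\ve|^2\Big)^{\mu} + C
\]
with $\mu:=\tfrac{(nq+2-n)(p-1)}{2-n(p-1)}<1$. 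Since $\mu<1$, a final application of Young's inequality turns $C(\io|\nabla\ve|^2)^\mu$ into $\tfrac12\io|\nabla\ve|^2+C'$, which is absorbed on the left. Exactly as in the proof of Lemma~\ref{Lem:veL2}, I would then bound $\io\ue^{q-2}|\nabla\ue|^2$ from below by $c_3\io\ue^q-c_4$ (via the Gagliardo--Nirenberg or Poincar\'e--Wirtinger inequality together with the mass conservation \eqref{local_App:mass} and \eqref{bound:uie}), so that $c\io\ue^{q-2}|\nabla\ue|^2 + \io|\nabla\ve|^2 \ge c_5\mathcal{F}_{2,\ep}-c_6$. This yields $\ddt\mathcal{F}_{2,\ep}+c_5\mathcal{F}_{2,\ep}\le C''$, and an ODE comparison together with the uniform bound on $\mathcal{F}_{2,\ep}(0)$ furnished by \eqref{bound:uie} and \eqref{bound:vie} gives $\sup_{t}\mathcal{F}_{2,\ep}(t)<\infty$ uniformly in $\ep$, in particular \eqref{vexL2:vexL2}.

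The role played by the $L^2$ bound on $\ve$ from Lemma~\ref{Lem:veL2} deserves a comment: it is \emph{not} needed to close the above loop directly, because I deliberately route the chemotactic term through \eqref{remark:ueLq} (the $r=2$ case) so that only $\io|\nabla\ve|^2$ itself appears on the right. However, if instead one prefers to invoke Lemma~\ref{Lem:vexLr}, its output \eqref{vexLr:vexLr} feeds back $\io\ve^2$, which is then controlled by Lemma~\ref{Lem:veL2}; either route works. The main obstacle is the bookkeeping of exponents in the first step---showing that one single $q>1$ meets all the constraints \eqref{vexL2Func:q}, \eqref{vexLr:pq} and $\mu<1$ under the hypothesis \eqref{veL2:p}---since these involve the somewhat delicate algebraic inequalities already encountered in Lemmas~\ref{Lem:ueLqFunc2} and \ref{Lem:vexLr}; the rest is a routine absorption argument of the type carried out in Lemma~\ref{Lem:veL2}.
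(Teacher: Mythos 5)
There is a genuine gap in your primary ($r=2$) route: the two exponent conditions you impose on $q$ are incompatible on part of the range \eqref{veL2:p}. Your absorption step needs $\mu=\frac{(nq+2-n)(p-1)}{2-n(p-1)}<1$, which is equivalent to $q<\frac{2(2-p)}{n(p-1)}$, while Lemma~\ref{Lem:vexL2Func} forces $q>2\theta-\frac{2}{n}$. These two constraints are simultaneously solvable if and only if $2\theta-\frac{2}{n}<\frac{2(2-p)}{n(p-1)}$, i.e.\ if and only if $p<1+\frac{1}{n\theta}$; but \eqref{veL2:p} admits every $p<1+\frac{1}{n\theta-1}$ when $n\ge2$ (and similarly for $n=1$ the compatible range of $p$ is strictly smaller than $\min\{2,\frac{2\theta+1}{2\theta-1}\}$). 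For instance $n=2$, $\theta=2$, $p=1.3$ is allowed by \eqref{veL2:p}, yet $2\theta-\frac2n=3$ while $\frac{2(2-p)}{n(p-1)}\approx 2.33$, so no admissible $q$ exists; there the right-hand side of \eqref{remark:ueLq} is a superlinear power of $\io|\nabla\ve|^2$ and cannot be absorbed by Young's inequality into the left-hand side. Hence the ``direct computation analogous to Lemma~\ref{Lem:veL2}'' you invoke does not go through on the full parameter range, and your main argument proves the lemma only under the stronger restriction $p<1+\frac{1}{n\theta}$.

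This is precisely why the paper does not take the $r=2$ shortcut. Its proof keeps $r$ as a free parameter, uses Lemma~\ref{Lem:vexLr} to estimate $\|\nabla\ve\|_{L^r(\Omega)}^{2a^{\ast}(p-1)}\le\eta\io|\Delta\ve|^2+C(\io\ve^2)^{\cdots}$, controls $\io\ve^2$ by Lemma~\ref{Lem:veL2}, and absorbs the Laplacian term into the dissipation $\io|\Delta\ve|^2$ supplied by Lemma~\ref{Lem:vexL2Func}; the relevant smallness condition is then $2a^{\ast}b_{\ast}(p-1)<2$ rather than $a^{\ast}(p-1)<1$, and the window for $q$ becomes $2\theta-\frac{2}{n}<q<\frac{2}{n(p-1)}$ (for $n\ge2$; analogous for $n=1$), which is nonempty throughout \eqref{veL2:p}. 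You do mention this route as an optional alternative (``either route works''), but you neither carry out the required exponent bookkeeping (existence of $q$ satisfying \eqref{vexL2Func:q} and \eqref{vexLr:pq}, and of $r$ with \eqref{vexLr:bStar}) nor recognize that it is in fact the only one of your two routes that covers the whole claimed range; as written, the proposal is incomplete.
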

\begin{proof}
We first claim that since $p>1$ satisfies \eqref{veL2:p},
it also fulfills the assumption \eqref{vexLr:pq}
in Lemma~\ref{Lem:vexLr}.
Indeed, this is obvious when $n=1,2$ since it follows that
$\frac{n+6}{n+2} \ge 2$, and
if $n \ge 3$, we see from
the fact $\theta > 1 > \frac{n+2}{2n}$ that
$\frac{n\theta}{n\theta-1} = 1+\frac{1}{n\theta-1}
<1+\frac{2}{n} = \frac{n+2}{n}$,
which yields the claim.
We next claim that there exists $q > 1$ fulfilling
\eqref{vexL2Func:q} and \eqref{vexLr:pq}
under the condition \eqref{veL2:p}
in Lemma~\ref{Lem:veL2}.
To verify this, if $n \ge 2$,
then we see from \eqref{veL2:p} that
$2\theta-\frac{2}{n} < \frac{2p}{n(p-1)}-\frac{2}{n}
=\frac{2}{n(p-1)}$,
which shows that the claim holds
even when $n=2$ since the identity $\frac{2}{n(p-1)}
= \frac{n^2 p -n^2-4p+12}{n^2 p-n^2+2np-2n}$
holds for $n=2$ and $p > 1$.
When $n=1$, we infer from the restriction
$p<\frac{2\theta+1}{2\theta-1}$ that
$2\theta-2 < \frac{3-p}{p-1} < \frac{11-3p}{3(p-1)}$,
and hence proves the claim.
Therefore, thanks to Lemma~\ref{Lem:vexLr},
we can choose $r \ge 2$ satisfying
\eqref{ueLqFunc2:r}, \eqref{vexLr:r} as well as
\eqref{vexLr:bStar},
whence from Lemma~\ref{Lem:ueLqFunc2} one can find
a constant $c_1 > 0$ such that
\begin{align}\label{vexL2:ineq1}
    \ddt\io\ue^q + \frac{q(q-1)}{4}\io\ue^{q-2}|\nabla\ue|^2
    \le c_1 \|\nabla\ve\|_{L^r(\Omega)}^{
      2a^{\ast}(p-1)} + c_1
\end{align}
in $(0, \Tmaxe)$ for all $\ep \in (0,1)$,
where $a^{\ast} > 1$ is taken from \eqref{ueLqFunc2:aStar}.
Again by Lemma~\ref{Lem:vexLr} along with
Lemma~\ref{Lem:veL2}, we can moreover
pick $c_2 > 0$ such that
\begin{align}\label{vexL2:ineq2}
    \|\nabla\ve\|_{L^r(\Omega)}^{2a^{\ast}(p-1)}
    \le \frac{1}{2c_1} \io |\Delta \ve|^2 + c_2
\end{align}
in $(0,\Tmaxe)$ for any $\ep\in (0,1)$.
In addition, we employ Lemma~\ref{Lem:vexL2Func} to obtain
$c_3 > 0$ such that
\begin{align}\label{vexL2:ineq3}
    \ddt\io |\nabla\ve|^2 + 2\io|\nabla\ve|^2
    + \io|\Delta\ve|^2
    \le \frac{q(q-1)}{8}\io\ue^{q-2}|\nabla\ue|^2 + c_3
\end{align}
in $(0, \Tmaxe)$ for all $\ep \in (0,1)$.
Consequently, a combination of \eqref{vexL2:ineq1},
\eqref{vexL2:ineq2} and \eqref{vexL2:ineq3} would
yield $c_4 > 0$ fulfilling
\begin{align}\label{vexL2:ineq4}
    \ddt\left[\io\ue^q+\io|\nabla\ve|^2\right]
    +\frac{q(q-1)}{8}\io\ue^{q-2}|\nabla\ue|^2
    +2\io|\nabla\ve|^2+\frac{1}{2}\io|\Delta\ve|^2
    \le c_4
\end{align}
in $(0, \Tmaxe)$ for all $\ep \in (0,1)$.
On the other hand, similarly as in the proof of
Lemma~\ref{Lem:veL2} we make use of
the Gagliardo--Nirenberg inequality
(Lemma~\ref{Lem:GN1}~\eqref{GN1-1}) and the Young inequality along with
\eqref{local_App:mass} as well as \eqref{bound:uie} to find
$c_5 > 0$ such that
\[
    \io \ue^{q-2}|\nabla\ue|^2 \ge \frac{16}{q(q-1)}\io \ue^q
    - c_5
\]
in $(0, \Tmaxe)$ for each $\ep \in (0,1)$.
In conclusion,
inserting this into \eqref{vexL2:ineq4} we infer that
there exists $c_6 > 0$ with the property that
the function
\[
    \mathcal{F}_{2,\ep}(t) := \io \ue^q(\cdot,t)
    + \io |\nabla\ve(\cdot,t)|^2
    \quad\mbox{for}\ t \in (0,\Tmaxe)\ \mbox{and}\ 
    \ep \in(0,1)
\]
satisfies
\[
    \ddt \mathcal{F}_{2,\ep} + 2\mathcal{F}_{2,\ep}
    + \frac{1}{2} \io |\Delta\ve|^2 \le c_6
\]
in $(0, \Tmaxe)$ for all $\ep \in (0,1)$,
whence in light of \eqref{bound:vie} this completes the proof.
\end{proof}

Thus, knowing that the intermediate inequality
\eqref{ueLqFunc2:ddtueLq} holds true with $r=2$
under the hypothesis \eqref{veL2:p} by Remark~\ref{Rem:r2},
with the aid of \eqref{vexL2:vexL2}
we can apply a standard testing procedure to derive
$L^{q}$ estimates for $\ue$ which arbitrary $q > 1$.

%
%
\begin{lem}\label{Lem:ueLq}
Let $n\in \N$,
and suppose that $p>1$ and $\theta > 1$ satisfy \eqref{veL2:p}.
Then for any $q > 1$,
there exists a constant $C > 0$ such that
\begin{align}
    &\io \ue^q(\cdot,t) \le C
    \label{ueLq:ueLq}
\\ \intertext{and}
    &\int_{(t-1)_{+}}^t \io \ue^{q-2}|\nabla\ue|^2 \le C
    \label{ueLq:grad}
\end{align}
for all $t \in (0,\Tmaxe)$ and $\ep \in(0,1)$, where
$(t-1)_{+} := \max\{0, t-1\}$.
\end{lem}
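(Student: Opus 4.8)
The plan is to combine the $L^2$ bounds for $\ve$ and $\nabla \ve$ obtained in Lemmas~\ref{Lem:veL2} and \ref{Lem:vexL2} with the intermediate differential inequality \eqref{remark:ueLq} from Remark~\ref{Rem:r2}. Since $p>1$ and $\theta>1$ satisfy \eqref{veL2:p}, Remark~\ref{Rem:r2} tells us that $r=2$ is admissible in Lemma~\ref{Lem:ueLqFunc2}, so for any fixed $q>1$ there is $c_1>0$ with
\begin{align*}
    \ddt\io\ue^q + \frac{q(q-1)}{4}\io\ue^{q-2}|\nabla\ue|^2
    \le c_1\left(\io|\nabla\ve|^2\right)^{\frac{(nq+2-n)(p-1)}{2-n(p-1)}} + c_1
\end{align*}
in $(0,\Tmaxe)$ for every $\ep\in(0,1)$. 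By Lemma~\ref{Lem:vexL2} the quantity $\io|\nabla\ve|^2$ is bounded by an $\ep$-independent constant, hence so is the power appearing on the right-hand side; note the exponent is finite since $p<\frac{n\theta}{n\theta-1}$ forces $n(p-1)<2$ as recorded in Remark~\ref{Rem:r2}. Therefore the right-hand side collapses to a constant $c_2>0$:
\begin{align*}
    \ddt\io\ue^q + \frac{q(q-1)}{4}\io\ue^{q-2}|\nabla\ue|^2 \le c_2
    \quad\text{in } (0,\Tmaxe).
\end{align*}

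Next I would absorb a multiple of $\io\ue^q$ on the left using the gradient term, exactly as done in the proofs of Lemmas~\ref{Lem:veL2} and \ref{Lem:vexL2}: by the Gagliardo--Nirenberg inequality (Lemma~\ref{Lem:GN1}~\eqref{GN1-1}), the Young inequality and the mass bound \eqref{local_App:mass}, \eqref{bound:uie}, there is $c_3>0$ such that
\begin{align*}
    \io\ue^{q-2}|\nabla\ue|^2 \ge \frac{8}{q(q-1)}\io\ue^q - c_3
\end{align*}
in $(0,\Tmaxe)$ for all $\ep\in(0,1)$. Plugging this in yields, with $y_\ep(t):=\io\ue^q(\cdot,t)$,
\begin{align*}
    y_\ep'(t) + 2\,y_\ep(t) + \frac{q(q-1)}{8}\io\ue^{q-2}|\nabla\ue|^2 \le c_4
\end{align*}
for some $c_4>0$, in $(0,\Tmaxe)$. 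An application of the Gronwall-type ODE comparison, together with the initial bound $y_\ep(0)=\io\uie^q \le U_q^q\|u_0\|_{L^q(\Omega)}^q$ coming from \eqref{bound:uie}, gives $y_\ep(t)\le \max\{y_\ep(0), \frac{c_4}{2}\}$, i.e.\ \eqref{ueLq:ueLq}.

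Finally, for \eqref{ueLq:grad} I would integrate the displayed inequality $y_\ep'(t) + \frac{q(q-1)}{8}\io\ue^{q-2}|\nabla\ue|^2 \le c_4$ over the interval $((t-1)_+,t)$: this gives
\begin{align*}
    \frac{q(q-1)}{8}\int_{(t-1)_+}^t\io\ue^{q-2}|\nabla\ue|^2
    \le y_\ep((t-1)_+) - y_\ep(t) + c_4 \le \sup_{(0,\Tmaxe)}y_\ep + c_4,
\end{align*}
which is $\ep$-independent by \eqref{ueLq:ueLq}, so \eqref{ueLq:grad} follows after dividing by $\frac{q(q-1)}{8}$. There is no serious obstacle here: all the hard analytic work — the flux-limited term estimate, the $L^2$ bounds on $v$ and $\nabla v$, and the choice of admissible exponents — has already been carried out in the preceding lemmas, and this proof is essentially an ODE-comparison bookkeeping step. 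The only point requiring a little care is verifying that the exponent $\frac{(nq+2-n)(p-1)}{2-n(p-1)}$ in \eqref{remark:ueLq} is indeed finite and positive, which is guaranteed by $n(p-1)<2$ under \eqref{veL2:p}, and that the constants produced do not depend on $\ep$, which is ensured throughout by \eqref{local_App:mass} and \eqref{bound:uie}.
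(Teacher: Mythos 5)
Your proposal is correct and follows essentially the same route as the paper: invoke Remark~\ref{Rem:r2} (i.e.\ Lemma~\ref{Lem:ueLqFunc2} with $r=2$) for the differential inequality, bound the right-hand side via Lemma~\ref{Lem:vexL2}, absorb $\io\ue^q$ through the Gagliardo--Nirenberg/Young lower bound on $\io\ue^{q-2}|\nabla\ue|^2$, and conclude by ODE comparison using \eqref{bound:uie}, with \eqref{ueLq:grad} obtained by integrating over unit time intervals. The paper merely states these last bookkeeping steps more tersely; your explicit Gronwall and integration arguments are exactly what is meant there.
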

\begin{proof}
Fixing $q > 1$ arbitrary, we infer from Remark~\ref{Rem:r2} that
there exists a constant $c_1 > 0$ fulfilling
\begin{align}\label{ueLq:ineq1}
    \ddt\io\ue^q + \frac{q(q-1)}{4}\io\ue^{q-2}|\nabla\ue|^2
    \le c_1 \left(\io|\nabla\ve|^2\right)^{
      \frac{(nq+2-n)(p-1)}{2-n(p-1)}} + c_1
\end{align}
in $(0,\Tmaxe)$ for all $\ep \in (0,1)$.
Since thanks to Lemma~\ref{Lem:vexL2}
we find $c_2 > 0$ such that
\begin{align}\label{ueLq:vexL2}
    \io |\nabla\ve|^2 \le c_2
\end{align}
in $(0,\Tmaxe)$ for each $\ep \in (0,1)$,
and since by adaptation of the reasoning
from Lemma~\ref{Lem:vexL2}
we obtain $c_3 > 0$ such that
\[
    \io \ue^{q-2}|\nabla\ue|^2 \ge \io \ue^{q} - c_3
\]
in $(0, \Tmaxe)$ for all $\ep \in (0,1)$,
we combine this and \eqref{ueLq:vexL2} with \eqref{ueLq:ineq1}
to find $c_4 > 0$ such that
\[
    \ddt\io\ue^q + \frac{q(q-1)}{4} \io\ue^{q-2}|\nabla\ue|^2
    \le c_4
\]
in $(0, \Tmaxe)$ for all $\ep \in (0,1)$.
In light of \eqref{bound:uie}, this establishes
\eqref{ueLq:ueLq} and \eqref{ueLq:grad}.
\end{proof}

As a final preparation,
in the following lemma we observe that
$\nabla \ve$ have $\ep$-independent bounds
in $L^{s}(\Omega)$ space, which will not only
enable us to pass to the limit as $\ep \dwto 0$ in the
second equation in \eqref{Sys:App}, but also
benefit our derivation of global boundedness features
in Lemma~\ref{Lem:ueLinfty}.

%
%
\begin{lem}\label{Lem:vexLs}
Let $n \in \N$,
and assume that $p > 1$ and $\theta > 1$ are as in \eqref{veL2:p}.
Then there exists a constant $C> 0$ such that
\begin{align}\label{vexLs:vexLs}
    \io |\nabla \ve(\cdot,t)|^s \le C
\end{align}
for any $t \in (0, \Tmaxe)$ and $\ep \in (0,1)$,
where $s \ge 2$ is as determined by \eqref{Main:s}.
\end{lem}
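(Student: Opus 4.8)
The plan is to establish the desired $L^s$ bound for $\nabla \ve$ by treating the second equation in \eqref{Sys:App} via the variation-of-constants formula and the smoothing estimates for the Neumann heat semigroup. Writing $\ve(\cdot,t) = e^{t\Delta}\vie + \int_0^t e^{(t-\sigma)\Delta}(\ue^{\theta}(\cdot,\sigma) - \ve(\cdot,\sigma))\,\intd{\sigma}$, I would first control $\|\nabla e^{t\Delta}\vie\|_{L^s(\Omega)}$ using \eqref{bound:vie} (together with $W^{1,s}(\Omega) \embd W^{1,s}(\Omega)$, and the standard smoothing bounds) when $s < \infty$, while if $s = \infty$ in \eqref{Main:s} one has $n=1$ and $p \ge \min\{2, \frac{2\theta+1}{2\theta-1}\}$, and the analogous argument can be run in any $L^q$ with $q > 1$ via Lemma~\ref{Lem:semigroup}. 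The main term to handle is $\int_0^t \|e^{(t-\sigma)\Delta}\nabla(\ue^{\theta} - \ve)(\cdot,\sigma)\|_{L^s(\Omega)}$ — more precisely, one should not differentiate $\ue^{\theta}$ but rather apply $\nabla e^{(t-\sigma)\Delta}$ to the whole source, so I would estimate $\int_0^t \|\nabla e^{(t-\sigma)\Delta}(\ue^{\theta}(\cdot,\sigma) - \ve(\cdot,\sigma))\|_{L^s(\Omega)}\,\intd{\sigma}$.

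The key point is that Lemma~\ref{Lem:ueLq} gives $\ep$-independent bounds on $\|\ue(\cdot,t)\|_{L^q(\Omega)}$ for \emph{every} $q > 1$, hence in particular on $\|\ue^{\theta}(\cdot,t)\|_{L^m(\Omega)}$ for every $m \ge 1$; likewise Lemma~\ref{Lem:veL2} (or just the mild bounds already derived) gives $\ep$-independent control of $\|\ve(\cdot,t)\|_{L^2(\Omega)}$, and one can bootstrap $\ve$ as well. Choosing $m$ large enough that the singularity $(t-\sigma)^{-\frac12 - \frac{n}{2}(\frac1m - \frac1s)}$ is integrable near $\sigma = t$ — which is possible precisely because the exponent stays below $1$ once $m > \frac{ns}{n+s}$ — and using the exponential decay factor $e^{-\lambda_1(t-\sigma)}$ to handle the tail $\sigma \to 0$, I would bound the whole convolution integral by a constant depending only on $\sup_{\sigma} \|\ue^{\theta}(\cdot,\sigma)\|_{L^m(\Omega)}$, $\sup_\sigma\|\ve(\cdot,\sigma)\|_{L^m(\Omega)}$ and $\Omega$. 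For $n \ge 2$ one invokes the standard $L^p$-$L^q$ semigroup estimates (analogous to Lemma~\ref{Lem:semigroup} but in higher dimensions, which are classical and available in the literature the paper already cites); for $n = 1$ Lemma~\ref{Lem:semigroup} itself applies directly.

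Concretely, the steps in order are: (i) write the mild formulation for $\ve$; (ii) bound the initial-data term using \eqref{bound:vie} and the semigroup smoothing, noting $v_0 \in W^{1,s}$; (iii) fix $m > \max\{1, \frac{ns}{n+s}\}$ so that $\frac12 + \frac{n}{2}(\frac1m - \frac1s) < 1$, and invoke Lemma~\ref{Lem:ueLq} to get a uniform bound $\|\ue^{\theta}(\cdot,\sigma)\|_{L^m(\Omega)} \le C$ (taking $q = \theta m$ there), and a similar uniform bound for $\|\ve(\cdot,\sigma)\|_{L^m(\Omega)}$ (via Lemma~\ref{Lem:ueLq} applied to the $\ve$-equation, or by a short separate bootstrap using the same convolution estimate with a cruder exponent); (iv) estimate $\int_0^t (1 + (t-\sigma)^{-\frac12 - \frac{n}{2}(\frac1m - \frac1s)}) e^{-\lambda_1(t-\sigma)}\,\intd{\sigma} \le C$ uniformly in $t > 0$; (v) collect the bounds to obtain \eqref{vexLs:vexLs}.

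The main obstacle is the bookkeeping around the case $s = \infty$, where one cannot literally take $s = \infty$ in the $L^p$-$L^q$ estimate for $e^{(t-\sigma)\Delta}\nabla$; here the hypothesis $n=1$ and $p \ge \min\{2,\frac{2\theta+1}{2\theta-1}\}$ in \eqref{Main:s} is used so that the desired conclusion is $\|\nabla\ve\|_{L^\infty(\Omega)} \le C$, and one must split the convolution and use Lemma~\ref{Lem:semigroup}\eqref{semigroup-3} with a finite but arbitrarily large $\qsf$ together with $W^{1,\qsf}(\Omega)\embd L^\infty(\Omega)$, while the leading term involves $\|\ve(\cdot,\sigma)\|_{W^{1,\qsf}}$ — so one first establishes a $W^{1,\qsf}$ bound for finite $\qsf$ and then upgrades. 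A secondary technical nuisance is ensuring all the generic constants are genuinely $\ep$-independent, which is automatic since every ingredient (Lemmas~\ref{Lem:ueLq}, \ref{Lem:veL2}, \ref{Lem:vexL2}, \ref{Lem:semigroup}, and \eqref{bound:vie}) is already $\ep$-uniform.
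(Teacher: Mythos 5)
Your argument is essentially correct, but it follows a genuinely different route from the paper. The paper proves this lemma by an energy method: it differentiates the second equation, tests with $|\nabla\ve|^{s-2}\nabla\ve$, controls the resulting boundary integral via \cite[Lemma 4.2]{MS-2014} and the fractional Gagliardo--Nirenberg argument of \cite[Lemma 2.1~c)]{LW-2017}, and then closes the differential inequality for $\io|\nabla\ve|^s$ using precisely the two inputs you also identify, namely the $L^2$ bound for $\nabla\ve$ (Lemma~\ref{Lem:vexL2}) and the arbitrary-$L^q$ bounds for $\ue$ (Lemma~\ref{Lem:ueLq}). Your Duhamel bootstrap is shorter and more standard, and with the natural modification of writing the variation-of-constants formula for the semigroup $e^{\tau(\Delta-1)}$ (so that the source is just $\ue^{\theta}$ and no $L^m$ bound for $\ve$ is needed) it closes cleanly: $\|\ue^\theta\|_{L^m(\Omega)}$ is uniformly bounded for every finite $m$ by Lemma~\ref{Lem:ueLq}, the singularity exponent $\frac12+\frac n2(\frac1m-\frac1s)$ is below $1$ for $m>\frac{ns}{n+s}$, and \eqref{bound:vie} handles the initial-data term. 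What the paper's route buys is independence from higher-dimensional $L^{\psf}$--$L^{\qsf}$ gradient estimates for the Neumann heat semigroup: Lemma~\ref{Lem:semigroup} is deliberately stated only for $n=1$ (where $\Omega$ is an interval), consistent with the author's caution about $\pa\Omega$ being merely $C^{2+\delta}$, whereas your $n\ge2$ argument must import those smoothing estimates from references that are usually formulated for smooth boundaries -- mathematically plausible, but a citation-level gap the paper avoids. Two further remarks: the $s=\infty$ case you worry about cannot occur under the hypothesis \eqref{veL2:p} of this lemma, since $s=\infty$ in \eqref{Main:s} requires $n=1$ and $p\ge\min\{2,\frac{2\theta+1}{2\theta-1}\}$, which \eqref{veL2:p} excludes (that regime is treated separately in Lemma~\ref{Lem:1D}); and your suggestion to bound $\|\ve\|_{L^m(\Omega)}$ ``via Lemma~\ref{Lem:ueLq} applied to the $\ve$-equation'' is misphrased (that lemma concerns $\ue$), though the issue disappears entirely with the $e^{\tau(\Delta-1)}$ formulation.
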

\begin{proof}
If $s=2$, then we already obtained \eqref{vexLs:vexLs} in
Lemma~\ref{Lem:vexL2}.
We henceforth consider the case $s > 2$.

From the second equation in \eqref{Sys:App} it follows that
for each $\ep \in (0,1)$,
\[
    \nabla (\ve)_t = \nabla \Delta \ve - \nabla \ve
    + \nabla \ue^{\theta}
\]
holds in $\Omega \times (0, \Tmaxe)$.
We test this by $|\nabla \ve|^{s-2}\nabla \ve$,
and then upon several integration by parts
along with the identity
$\nabla \ve \cdot \nabla\Delta\ve
=\frac{1}{2}\Delta |\nabla\ve|^2 - |D^2 \ve|^2$
to see that
\begin{align}
\notag
    &\frac{1}{s} \ddt \io |\nabla\ve|^s
\\ \notag
    &\quad\, = \frac{1}{2} \io |\nabla \ve|^{s-2} \Delta|\nabla\ve|^2
    - \io |\nabla\ve|^{s-2} |D^2 \ve|^2
    - \io |\nabla\ve|^s
    + \io |\nabla\ve|^{s-2} \nabla \ve \cdot \nabla \ue^{\theta}
\\ \notag
    &\quad\, = \frac{1}{2} \int_{\pa\Omega} 
      |\nabla\ve|^{s-2} (\nabla |\nabla\ve|^2\cdot \nu)
    - \frac{s-2}{4}
      \io |\nabla\ve|^{s-4} \big|\nabla|\nabla\ve|^2\big|^2
    - \io |\nabla\ve|^{s-2} |D^2 \ve|^2
\\
    &\qquad\, - \io |\nabla\ve|^s
    - \io \ue^{\theta} |\nabla\ve|^{s-2} \Delta\ve
    - \frac{s-2}{2}
      \io \ue^{\theta} |\nabla\ve|^{s-4}
        \nabla\ve\cdot\nabla|\nabla\ve|^2
    \label{vexLs:ineq1}
\end{align}
in $(0, \Tmaxe)$ for all $\ep \in (0,1)$.
To estimate the boundary integral on the right-hand side
we apply \cite[Lemma 4.2]{MS-2014} and rely on
the argument
based on the fractional Gagliardo--Nirenberg inequality
(cf.\ \cite[Theorem 1]{BM-2019})
in
\cite[Lemma 2.1~c)]{LW-2017}
to obtain $c_1 > 0$ fulfilling
\begin{align}\label{vexLs:boundary}
    \int_{\pa\Omega} |\nabla\ve|^{s-2}
      (\nabla|\nabla\ve|^2\cdot\nu)
    \le \frac{s-2}{4}
      \io |\nabla\ve|^{s-4}\big|\nabla|\nabla\ve|^2\big|^2
    + c_1 \left(\io |\nabla\ve|^2\right)^{\frac{s}{2}}
\end{align}
in $(0, \Tmaxe)$ for all $\ep \in (0,1)$.
On the other hand, according to
the pointwise estimate $|\Delta \ve| \le \sqrt{n} |D^2 \ve|$
(see \cite[Lemma 2.1~a)]{LW-2017})
and the identity
$\nabla|\nabla\ve|^2 = 2D^2 \ve \nabla \ve$
for $\ep \in (0,1)$,
we twice employ the Young inequality
to find $c_2 > 0$ such that
\begin{align*}
    &- \io \ue^{\theta} |\nabla\ve|^{s-2} \Delta\ve
    - \frac{s-2}{2}
      \io \ue^{\theta} |\nabla\ve|^{s-4}
        \nabla\ve\cdot\nabla|\nabla\ve|^2
\\
    &\quad\, \le (s-2+\sqrt{n})
      \io |\nabla\ve|^{s-2} |D^2 \ve| \ue^{\theta}
\\
    &\quad\, \le \frac{1}{2} \io |\nabla\ve|^{s-2} |D^2\ve|^2
    + \frac{(s-2+\sqrt{n})^2}{2} \io |\nabla\ve|^{s-2} \ue^{2\theta}
\\
    &\quad\, \le \frac{1}{2} \io |\nabla\ve|^{s-2} |D^2\ve|^2
    + \frac{1}{2} \io |\nabla\ve|^s + c_2 \io \ue^{\theta s}
\end{align*}
in $(0, \Tmaxe)$ for any $\ep \in (0,1)$.
Thus, inserting this and \eqref{vexLs:boundary} into
\eqref{vexLs:ineq1} provides $c_3 > 0$ such that
\begin{align}
\nonumber
    &\ddt \io |\nabla\ve|^s + \frac{s(s-2)}{8}
      \io |\nabla\ve|^{s-4}\big|\nabla|\nabla\ve|^2\big|^2
      + \frac{s}{2} \io |\nabla\ve|^{s-2} |D^2\ve|^2
      + \frac{s}{2} \io |\nabla\ve|^s
\\
      &\quad\, \le c_3 \left(\io|\nabla\ve|^2\right)^{\frac{s}{2}}
      + c_3 \io \ue^{\theta s}
      \label{vexLs:ineq2}
\end{align}
in $(0, \Tmaxe)$ for all $\ep \in (0,1)$.
Since Lemmas~\ref{Lem:vexL2} and \ref{Lem:ueLq} ensure
the existence of $c_4 > 0$ such that
\[
    \io |\nabla\ve|^2 \le c_4
    \quad\mbox{and}\quad
    \io \ue^{\theta s} \le c_4
\]
in $(0, \Tmaxe)$ for all $\ep \in (0,1)$,
the inequality \eqref{vexLs:ineq2}
together with \eqref{bound:vie}
thereby establishes \eqref{vexLs:vexLs}.
\end{proof}

We are now in a position to
derive $L^{\infty}$ estimates for $\ue$
and to verify that approximate solutions can exist globally,
provided that $p > 1$ and $\theta > 1$ fulfill the hypothesis
\eqref{veL2:p} in Lemma~\ref{Lem:veL2}.

%
%
\begin{lem}\label{Lem:ueLinfty}
Let $n \in \N$ be arbitrary,
and suppose that $p > 1$ and $\theta > 1$ satisfy \eqref{veL2:p}.
Then there exists a constant $C > 0$ such that
\begin{align}\label{ueLinfty:sum}
    \|\ue(\cdot,t)\|_{L^{\infty}(\Omega)}
    + \|\ve(\cdot,t)\|_{W^{1,s}(\Omega)}
    \le C
\end{align}
for all $t \in (0, \Tmaxe)$ and $\ep \in (0,1)$.

In particular, for each $\ep \in (0,1)$ the pair of functions
$(\ue,\ve)$ is a global classical solution of \eqref{Sys:App},
that is, $\Tmaxe= \infty$.
\end{lem}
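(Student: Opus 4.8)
\noindent The plan is to feed the $\ep$-independent $L^q$ bounds for $\ue$ (Lemma~\ref{Lem:ueLq}) and the $L^s$ bound for $\nabla\ve$ (Lemma~\ref{Lem:vexLs}) into the variation-of-constants representations of $\ve$ and $\ue$, and to invoke the standard smoothing properties of the Neumann heat semigroup $(e^{\tau\Delta})_{\tau\ge0}$ to upgrade these to the bounds in \eqref{ueLinfty:sum}; the blow-up alternative in \eqref{local_App:criterion} is then violated, which forces $\Tmaxe=\infty$.

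First I would control $\ve$. Writing the Duhamel formula for the second equation in \eqref{Sys:App},
\[
    \ve(\cdot,t)=e^{-t}e^{t\Delta}\vie+\int_0^t e^{-(t-\sigma)}e^{(t-\sigma)\Delta}\ue^{\theta}(\cdot,\sigma)\,\intd\sigma ,
\]
the $L^q$--$L^{\infty}$ smoothing estimate for $(e^{\tau\Delta})_{\tau\ge0}$ applied with $w=\ue^{\theta}$ and some $q>\frac n2$ (so that the time singularity is integrable), together with $\|\ue^{\theta}(\cdot,\sigma)\|_{L^q(\Omega)}=\|\ue(\cdot,\sigma)\|_{L^{q\theta}(\Omega)}^{\theta}\le C$ from Lemma~\ref{Lem:ueLq} and $\|\vie\|_{L^{\infty}(\Omega)}\le C$ (which follows from \eqref{bound:vie} and $W^{1,s}\embd L^{\infty}$ when $s<\infty$, and is trivial when $s=\infty$ since $\vie=v_0$), yields $\|\ve(\cdot,t)\|_{L^{\infty}(\Omega)}\le C$ for all $t\in(0,\Tmaxe)$ and $\ep\in(0,1)$. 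When $s<\infty$, combining this with Lemma~\ref{Lem:vexLs} gives $\|\ve(\cdot,t)\|_{W^{1,s}(\Omega)}\le C$; when $n=1$ and $s=\infty$, one instead estimates $\nabla\ve$ directly from the analogous Duhamel formula, using Lemma~\ref{Lem:semigroup}(i) with $w=\ue^{\theta}$ (again with a suitable exponent so that the singularity is integrable) and the fact that the contribution of $\vie=v_0\in W^{1,\infty}(\Omega)$ is $\ep$-independent and decays, obtaining $\|\nabla\ve(\cdot,t)\|_{L^{\infty}(\Omega)}\le C$.

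Next I would derive the $L^{\infty}$ bound for $\ue$. Since $p<2$ under \eqref{veL2:p}, we have $(|\nabla\ve|^2+\ep)^{\frac{p-2}{2}}|\nabla\ve|\le|\nabla\ve|^{p-1}$. I fix $r$ with $n<r<\frac{s}{p-1}$, which is possible because \eqref{Main:s} forces $s>\max\{n,(n+2)(p-1)\}$, hence $\frac{s}{p-1}>n+2>n$ (in the case $s=\infty$ one simply takes $r=2$ and uses the $L^{\infty}$-bound for $\nabla\ve$ from the previous step, so the Hölder step below becomes $\|\ue(|\nabla\ve|^2+\ep)^{\frac{p-2}{2}}\nabla\ve\|_{L^r(\Omega)}\le\|\ue\|_{L^r(\Omega)}\|\nabla\ve\|_{L^{\infty}(\Omega)}^{p-1}$). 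Then, by the Hölder inequality with exponents $\frac{s}{r(p-1)}$ and its conjugate, Lemma~\ref{Lem:vexLs} and Lemma~\ref{Lem:ueLq},
\[
    \big\|\ue(|\nabla\ve|^2+\ep)^{\frac{p-2}{2}}\nabla\ve\big\|_{L^r(\Omega)}\le\|\ue\|_{L^{\frac{rs}{s-r(p-1)}}(\Omega)}\,\|\nabla\ve\|_{L^s(\Omega)}^{p-1}\le C
\]
uniformly in $t\in(0,\Tmaxe)$ and $\ep\in(0,1)$. Inserting this into
\[
    \ue(\cdot,t)=e^{t\Delta}\uie-\chi\int_0^t e^{(t-\sigma)\Delta}\nabla\cdot\big(\ue(|\nabla\ve|^2+\ep)^{\frac{p-2}{2}}\nabla\ve\big)(\cdot,\sigma)\,\intd\sigma ,
\]
using $\|e^{t\Delta}\uie\|_{L^{\infty}(\Omega)}\le\|\uie\|_{L^{\infty}(\Omega)}\le C$ by \eqref{bound:uie}, the smoothing estimate $\|e^{\tau\Delta}\nabla\cdot\varphi\|_{L^{\infty}(\Omega)}\le C(1+\tau^{-\frac12-\frac n{2r}})e^{-\lambda_1\tau}\|\varphi\|_{L^r(\Omega)}$ (which is Lemma~\ref{Lem:semigroup}(iii) for $n=1$ and the standard Neumann heat semigroup estimate, e.g.\ \cite{W-2010}, in general), and the fact that $r>n$ renders $\tau^{-\frac12-\frac n{2r}}$ integrable near $\tau=0$ while $e^{-\lambda_1\tau}$ controls $\tau\to\infty$, I obtain $\sup_{t\in(0,\Tmaxe)}\|\ue(\cdot,t)\|_{L^{\infty}(\Omega)}\le C$. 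Together with the bound for $\ve$ this establishes \eqref{ueLinfty:sum}, and then \eqref{local_App:criterion} — or, for $n=1$, its counterpart with $\|\ve(\cdot,t)\|_{L^{\infty}(\Omega)}$ in place of $\|\ve(\cdot,t)\|_{W^{1,s}(\Omega)}$ — shows that $\Tmaxe=\infty$.

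The only genuine work is the interpolation step of the third paragraph, and the single point that must be checked is that the admissible range $n<r<\frac{s}{p-1}$ for the exponent is nonempty — which is precisely what the constraint on $s$ in \eqref{Main:s}, and ultimately the hypothesis $p<\frac{n\theta}{n\theta-1}$, guarantees. I do not expect any real obstacle, since all the substantive $\ep$-independent estimates have already been assembled in Lemmas~\ref{Lem:ueLq} and \ref{Lem:vexLs}.
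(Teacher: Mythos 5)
Your proposal is correct, but it reaches \eqref{ueLinfty:sum} by a different route than the paper. For the bound on $\ue$, the paper simply observes that Lemmas~\ref{Lem:ueLq} and \ref{Lem:vexLs} together with $s>(n+2)(p-1)$ put the flux $F_{\ep}=\ue(|\nabla\ve|^2+\ep)^{\frac{p-2}{2}}\nabla\ve$ uniformly in $L^{q}(\Omega)$ for some $q>n+2$, and then invokes the Moser-type iteration of \cite[Lemma A.1]{TW-2012}; you instead bound $F_{\ep}$ in $L^{r}(\Omega)$ with $n<r<\frac{s}{p-1}$ (the same H\"older/interpolation input, with a slightly weaker requirement $r>n$) and close via the variation-of-constants formula and the $e^{\tau\Delta}\nabla\cdot$ smoothing estimate, which works in one pass precisely because your flux bound is already independent of $\|\ue\|_{L^{\infty}(\Omega)}$. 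The price is that you need the higher-dimensional Neumann semigroup estimates, which the paper's Lemma~\ref{Lem:semigroup} states only for $n=1$; your citation of \cite{W-2010} for general $n$ is standard and acceptable, whereas the paper's choice of Moser iteration avoids them altogether. For the $\ve$-part, the paper does not go through an $L^{\infty}$ bound at all: it controls $\|\ve\|_{L^{s}(\Omega)}$ by the Poincar\'e--Wirtinger inequality combined with the $L^{2}$ bound of Lemma~\ref{Lem:veL2} and then adds \eqref{vexLs:vexLs}, while your Duhamel argument yields the stronger uniform $L^{\infty}$ bound for $\ve$ (which the paper only recovers later, in \eqref{Main_Boundedness}). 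One cosmetic remark: under hypothesis \eqref{veL2:p} the case $s=\infty$ of \eqref{Main:s} cannot occur (it is exactly the regime handled by Lemma~\ref{Lem:1D}), so your parenthetical treatment of $s=\infty$ is dead weight, though harmless.
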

\begin{proof}
Thanks to Lemmas~\ref{Lem:ueLq} and \ref{Lem:vexLs}
as well as the assumption $s > (n+2)(p-1)$ in \eqref{Main:s},
we find a number $q > n+2$ and a constant $c_1 > 0$
such that the function
$F_{\ep}:=\ue (|\nabla\ve|^2+\ep)^{\frac{p-2}{2}} \nabla\ve$
belongs to $L^{\infty}(0, \Tmaxe; L^q(\Omega)^{n})$
and that
$\|F_{\ep}(t)\|_{L^q(\Omega)} \le c_1$ for all
$t \in (0, \Tmaxe)$ and $\ep \in (0,1)$.
In light of \eqref{bound:uie}, this allows us to perform
the Moser-type iteration argument in \cite[Lemma A.1]{TW-2012},
so that we obtain $c_2 > 0$ satisfying
\begin{align}\label{ueLinfty:Linfty}
    \|\ue(\cdot,t)\|_{L^{\infty}(\Omega)}
    \le c_2
\end{align}
for all $t \in (0,\Tmaxe)$ and $\ep \in (0,1)$.
Apart from that, once more relying on Lemma~\ref{Lem:vexLs},
and invoking the Poincar\'{e}--Wirtinger inequality,
the Young inequality and Lemma~\ref{Lem:veL2}, we
can find $c_3, c_4 > 0$ such that
\begin{align*}
    \|\ve(\cdot,t)\|_{L^s(\Omega)}
    &\le \left\|\ve(\cdot,t)-\frac{1}{|\Omega|}\io\ve(\cdot,t)
      \right\|_{L^s(\Omega)}
    + |\Omega|^{\frac{1}{s} - 1} \io\ve(\cdot,t)
\\
    &\le c_3 \|\nabla \ve(\cdot,t)\|_{L^s(\Omega)}
    + \frac{1}{2} \io \ve^2(\cdot,t) 
    + \frac{1}{2} |\Omega|^{\frac{2}{s}-1}
\\
    &\le c_4
\end{align*}
for all $t \in (0, \Tmaxe)$ and $\ep \in (0,1)$.
Together with \eqref{vexLs:vexLs} and \eqref{ueLinfty:Linfty},
this establishes \eqref{ueLinfty:sum}.
In view of \eqref{local_App:criterion}, the latter statement
being valid due to \eqref{ueLinfty:sum}.
\end{proof}

As a last ingredient for our analysis, let us add
a boundedness statement
within the parameter range described in \eqref{Main_Assumption}
in the case $n=1$,
which is not fully covered by the previous lemmas.
In this situation, standard estimates
for the Neumann heat semigroup become applicable and
play a crucial role.

%
%
\begin{lem}\label{Lem:1D}
Let $n=1$, and suppose that $p > 1$ and $\theta > 1$
fulfill
\begin{align}\label{1D:p}
    p \in \left[\min\left\{2, \frac{2\theta+1}{2\theta-1}\right\},
    \frac{\theta}{\theta-1}\right).
\end{align}
Then one can find a constant $C> 0$ such that
\begin{align}\label{1D:sum}
    \|\ue(\cdot,t)\|_{L^{\infty}(\Omega)}
    + \|\ve(\cdot,t)\|_{W^{1,\infty}(\Omega)} \le C
\end{align}
for all $t \in (0,\Tmaxe)$ and $\ep \in (0,1)$.

In particular, for every $\ep \in (0,1)$ the pair of functions
$(\ue,\ve)$ is a global classical solution of \eqref{Sys:App},
namely, $\Tmaxe= \infty$.
\end{lem}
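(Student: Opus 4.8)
The plan is to treat the one-dimensional case by a bootstrap argument based on the $L^{\psf}$-$L^{\qsf}$ estimates for the Neumann heat semigroup collected in Lemma~\ref{Lem:semigroup}, starting from the $\ep$-independent bounds already established under the hypothesis \eqref{veL2:p}. Note first that \eqref{1D:p} requires $p \ge \min\{2, \frac{2\theta+1}{2\theta-1}\}$, which is precisely the threshold appearing in \eqref{veL2:p} for $n=1$; so we cannot directly invoke Lemma~\ref{Lem:ueLinfty}. However, the range \eqref{1D:p} does force $p < \frac{\theta}{\theta-1}$, equivalently $\theta(p-1) < p$, which will be the crucial algebraic inequality making the iteration close. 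The strategy is: (i) obtain a uniform $L^1$ bound for $\ue$ (mass conservation \eqref{local_App:mass} together with \eqref{bound:uie}), hence a uniform $L^1$ bound for $\ue^\theta$ fails in general---so instead we begin from the boundedness information that does hold; (ii) climb a finite ladder of $L^q$-bounds for $\ue$ and correspondingly for $\nabla\ve$; (iii) terminate at $L^\infty(\Omega)$ for $\ue$ and $W^{1,\infty}(\Omega)$ for $\ve$.

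Here is the ladder in more detail. First I would observe that, since $n=1$, one can pick $q_0 > 1$ in the overlap region so that Lemmas~\ref{Lem:ueLq}--\ref{Lem:vexLs} are \emph{not} available, but the functional-inequality machinery of Lemmas~\ref{Lem:ueLqFunc2} and \ref{Lem:vexLr} can still be pushed: actually the cleanest route is to argue that \eqref{1D:p} is a \emph{subset} of the parameter range handled by a semigroup bootstrap as in \cite[Lemma 2.2]{KoP} or \cite[Lemma 4.7]{Ko3}. Concretely: suppose inductively that $\|\ue(\cdot,t)\|_{L^{q}(\Omega)} \le C_q$ uniformly for some $q \in [1,\infty)$. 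By the second equation and the variation-of-constants formula,
\begin{align*}
    \ve(\cdot,t) = e^{t\Delta}\vie + \int_0^t e^{(t-\tau)\Delta}\big(-\ve(\cdot,\tau) + \ue^\theta(\cdot,\tau)\big)\,\intd{\tau},
\end{align*}
so differentiating and applying Lemma~\ref{Lem:semigroup}~\eqref{semigroup-1} (the $n=1$ version) gives
\begin{align*}
    \|\nabla\ve(\cdot,t)\|_{L^{\rsf}(\Omega)} \le C + C\int_0^t \big(1 + (t-\tau)^{-\frac12 - \frac12(\frac{\theta}{q} - \frac{1}{\rsf})}\big) e^{-\lambda_1(t-\tau)} \|\ue(\cdot,\tau)\|_{L^{q/\theta}(\Omega)}^{\theta}\,\intd{\tau},
\end{align*}
provided $q/\theta \ge 1$; the time integral converges whenever the exponent of $(t-\tau)$ exceeds $-1$, i.e.\ when $\frac{\theta}{q} - \frac{1}{\rsf} < 1$, which can be arranged for $\rsf$ as large as we like once $q$ is large enough, and in fact for $\rsf = \infty$ once $q > \theta$. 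Plugging the resulting $L^{\rsf}$-bound for $\nabla\ve$ into the differential inequality \eqref{ueLqFunc2:ddtueLq} (valid for this $\rsf$ by \eqref{ueLqFunc2:r}, since $\rsf > 2(p-1)$ as $p<2$ forces $2(p-1)<2$, or as $\rsf$ can be taken larger than $(p-1)$ anyway) and using an ODE comparison with the dissipation term, one upgrades the $L^q$-bound for $\ue$ to an $L^{Q}$-bound with $Q$ strictly larger, the gain being governed exactly by the condition $\theta(p-1) < p$ coming from \eqref{1D:p}. Iterating finitely many times reaches $q = \infty$ for $\ue$ via the Moser iteration of \cite[Lemma A.1]{TW-2012} (as in Lemma~\ref{Lem:ueLinfty}), and then the semigroup estimate with $\rsf = \infty$ yields $\|\nabla\ve(\cdot,t)\|_{L^\infty(\Omega)} \le C$, whence \eqref{1D:sum}. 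Finally \eqref{local_App:criterion} (in the $n=1$ form, with $\|\ve(\cdot,t)\|_{L^\infty(\Omega)}$ replacing $\|\ve(\cdot,t)\|_{W^{1,s}(\Omega)}$) forces $\Tmaxe = \infty$.

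The main obstacle is getting the iteration \emph{started} and ensuring each step strictly improves the integrability exponent. Because $\theta>1$, we do \emph{not} have a free $L^1$-bound on $\ue^\theta$, so the very first rung of the ladder must come from elsewhere---specifically from the $L^2$-bound on $\nabla\ve$ (Lemma~\ref{Lem:vexL2}) and the $L^q$-bounds on $\ue$ for \emph{some} $q>1$, which are only guaranteed under \eqref{veL2:p}; for $p$ at or above the threshold in \eqref{1D:p} those lemmas do not directly apply, so one has to verify that the entropy-type argument of Lemmas~\ref{Lem:ueLqFunc1}--\ref{Lem:veL2} still furnishes \emph{some} base-level bound, or else run the whole bootstrap purely from mass conservation plus the smallness $\theta(p-1)<p$. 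The delicate bookkeeping is to check that the exponent $\frac12 + \frac12(\frac{\theta}{q} - \frac{1}{\rsf})$ stays below $1$ at every stage, i.e.\ that the singularity in the heat-kernel estimate remains integrable; this is where the strict inequality $p < \frac{\theta}{\theta-1}$ is consumed, and where one must be careful that the finitely many iteration steps indeed terminate rather than merely approaching $\infty$ asymptotically. Once that is pinned down, the remaining computations are routine applications of Young's inequality, the Poincaré--Wirtinger inequality and ODE comparison, exactly parallel to the arguments already carried out in Lemmas~\ref{Lem:ueLq}--\ref{Lem:ueLinfty}.
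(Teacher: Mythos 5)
You have correctly isolated the algebraic heart of the matter, namely that \eqref{1D:p} forces $(\theta-1)(p-1)<1$ (equivalently $\theta(p-1)<p$), and that the one-dimensional semigroup estimates of Lemma~\ref{Lem:semigroup} are the right tool. However, your proposed bootstrap ladder has a genuine gap which you yourself flag but do not close: the iteration cannot be started. In the range \eqref{1D:p} the entropy-type machinery (Lemmas~\ref{Lem:ueLqFunc1}--\ref{Lem:veL2}, \ref{Lem:vexL2}, \ref{Lem:ueLq}, \ref{Lem:vexLs}) is unavailable precisely because it requires $p<\min\{2,\frac{2\theta+1}{2\theta-1}\}$, and mass conservation alone gives only $\|\ue\|_{L^1(\Omega)}\le C$; but the first rung of your ladder needs an $L^m$ bound on $\ue^{\theta}$ with some $m\ge 1$, i.e.\ an $L^{m\theta}$ bound on $\ue$ with $m\theta>1$, which is exactly what is missing when $\theta>1$. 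Likewise \eqref{ueLqFunc2:ddtueLq} is useless without an a priori $L^r$ bound on $\nabla\ve$ as input. So as written there is no base case, and the fallback you mention (``run the whole bootstrap purely from mass conservation plus the smallness $\theta(p-1)<p$'') is not worked out and does not function as a bootstrap.

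The paper closes this gap not by a ladder but by a single absorption argument against the unknown quantity $R_{\ep}:=\sup_{t\in(0,\Tmaxe)}\|\ue(\cdot,t)\|_{L^{\infty}(\Omega)}$. Fixing $q,r>1$ so close to $1$ that $(\theta-\frac1r)(p-1)+(1-\frac1q)<1$ (possible since $(\theta-1)(p-1)<1$), one writes both solution components by variation of constants and interpolates only between $L^1$ and $L^{\infty}$: first
\begin{align*}
    \|\nabla\ve(\cdot,t)\|_{L^{\infty}(\Omega)}
    \le c_1\|v_0\|_{W^{1,\infty}(\Omega)}
    + c_2\,R_{\ep}^{\theta-\frac1r},
\end{align*}
using $\|\ue^{\theta}\|_{L^r(\Omega)}\le\|\ue\|_{L^1(\Omega)}^{\frac1r}\|\ue\|_{L^{\infty}(\Omega)}^{\theta-\frac1r}$ together with Lemma~\ref{Lem:semigroup}, \eqref{local_App:mass} and \eqref{bound:uie}; then, feeding this into the Duhamel formula for $\ue$ and estimating the flux term by $\|\ue\|_{L^1(\Omega)}^{\frac1q}\|\ue\|_{L^{\infty}(\Omega)}^{1-\frac1q}\,(1+\|\nabla\ve\|_{L^{\infty}(\Omega)})^{p-1}$, one arrives at $R_{\ep}\le c_3+c_4 R_{\ep}^{(\theta-\frac1r)(p-1)+(1-\frac1q)}$, which yields a uniform bound on $R_{\ep}$ because the exponent is strictly below $1$; the $W^{1,\infty}$ bound for $\ve$ then follows by re-inserting this into the first estimate and using the Poincar\'e--Wirtinger inequality, and \eqref{local_App:criterion} gives $\Tmaxe=\infty$. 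No intermediate $L^q$ bounds and no Moser iteration are needed. The missing idea in your proposal is exactly this interpolation against the unknown sup-norm combined with absorption at a sublinear power, which replaces the entire ladder and is where the hypothesis $p<\frac{\theta}{\theta-1}$ is consumed.
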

\begin{proof}
We first note that since \eqref{1D:p} ensures
that
$(\theta-1)(p-1)+(1-1)<1$,
there exist $q, r > 1$ such that
\begin{align}\label{1D:crucial}
    \left(\theta-\frac{1}{r}\right)(p-1)
    + \left(1-\frac{1}{q}\right) < 1
\end{align}
is satisfied.
Keeping this parameter $q,r>1$ fixed,
on the basis of a variation-of-constants representation
associated with \eqref{Sys:App}
we apply Lemma~\ref{Lem:semigroup}~\eqref{semigroup-1},
\eqref{semigroup-2},
\eqref{local_App:mass}
and \eqref{bound:uie} to find $c_1, c_2 > 0$ such that
\begin{align}
\nonumber
    &\|\nabla\ve(\cdot,t)\|_{L^{\infty}(\Omega)}
\\ \nonumber
    &\quad\,\le c_1 \|\nabla v_0\|_{L^{\infty}(\Omega)}
    + c_1 \int_0^t (1+(t-\sigma)^{-\frac{1}{2}-\frac{1}{2r}})
    e^{-(1+\lambda_1)(t-\sigma)} \|\ue^{\theta}(\cdot,\sigma)\|_{
      L^r(\Omega)}\, \intd{\sigma}
\\ \nonumber
    &\quad\,\le c_1 \|v_0\|_{W^{1,\infty}(\Omega)}
\\ \nonumber
    &\qquad\, + c_1 \int_0^t (1+(t-\sigma)^{-\frac{1}{2}-\frac{1}{2r}})
    e^{-(1+\lambda_1)(t-\sigma)}
    \|\ue(\cdot,\sigma)\|_{L^1(\Omega)}^{\frac{1}{r}}
    \|\ue(\cdot,\sigma)\|_{L^{\infty}(\Omega)}^{\theta-\frac{1}{r}}
    \, \intd{\sigma}
\\
    &\quad\, \le c_1 \|v_0\|_{W^{1,\infty}(\Omega)}
    + c_2 c_3 \left(\sup_{\sigma \in (0, \Tmaxe)}
      \|\ue(\cdot, \sigma)\|_{L^{\infty}(\Omega)}\right)^{
        \theta-\frac{1}{r}}
    \label{1D:1}
\end{align}
for all $t \in (0,\Tmaxe)$ and $\ep \in (0,1)$, where
$c_3 := \int_0^{\infty} (1+z^{-\frac{1}{2}-\frac{1}{2r}})e^{
-(1+\lambda_1)z}\,\intd{z}$ is finite due to the fact that $r>1$.
According to \eqref{1D:1}, we rely on
Lemma~\ref{Lem:semigroup}~\eqref{semigroup-3}
and again make use of \eqref{local_App:mass}
as well as \eqref{bound:uie} to obtain $c_5, c_6 >0$ such that
\begin{align}
\nonumber
    &\|\ue(\cdot,t)\|_{L^{\infty}(\Omega)}
\\ \nonumber
    &\quad\, \le \|\uie\|_{L^{\infty}(\Omega)}
    + c_4\int_0^t (1+(t-\sigma)^{-\frac{1}{2}-\frac{1}{2q}})
    e^{-\lambda_1 (t-\sigma)}
    \big\|\ue(\cdot,\sigma)
      (|\nabla\ve(\cdot,\sigma)|^2+1)^{\frac{p-1}{2}}
    \big\|_{L^q(\Omega)}\, \intd{\sigma}
\\
    &\quad\,\le c_5 \|u_0\|_{L^{\infty}(\Omega)}
    + c_6 c_7 \left(\sup_{\sigma \in (0, \Tmaxe)}
      \|\ue(\cdot, \sigma)\|_{L^{\infty}(\Omega)}\right)^{
        (\theta-\frac{1}{r})(p-1)+(1-\frac{1}{q})}
        \label{1D:2}
\end{align}
for all $t \in (0,\Tmaxe)$ and $\ep \in (0,1)$, where
$c_7 := \int_0^{\infty} (1+z^{-\frac{1}{2}-\frac{1}{2q}})e^{
-\lambda_1 z}\,\intd{z}$ is finite since $q>1$.
Therefore, abbreviating
\[
    R_{\ep} := \sup_{t \in (0, \Tmaxe)} \|\ue(\cdot,t)\|_{
      L^{\infty}(\Omega)}
\]
for each $\ep \in (0,1)$, we see from \eqref{1D:2} that
\[
    R_{\ep} \le c_5 \|u_0\|_{L^{\infty}(\Omega)}
    + c_6 c_7 R_{\ep}^{(\theta-\frac{1}{r})(p-1)+(1-\frac{1}{q})}
\]
for any $\ep \in (0,1)$.
Thanks to \eqref{1D:crucial}, this provides a constant
$c_8 > 0$ which satisfies
$R_{\ep} \le c_8$ for all $\ep\in (0,1)$,
and hence $\|\ue(\cdot,t)\|_{L^{\infty}(\Omega)}\le c_8$
for all $t \in (0, \Tmaxe)$ and $\ep \in (0,1)$.
Inserting this into \eqref{1D:1}, we moreover find
a constant $c_9 > 0$ such that
$\|\nabla \ve(\cdot,t)\|_{L^s(\Omega)}\le c_9$
for all $t \in (0, \Tmaxe)$ and $\ep \in (0,1)$.
Therefore, this together with
the Poincar\'{e}--Wirtinger inequality thereby proves
\eqref{1D:sum}.
The latter statement also follows by means of the reasoning in
Lemma~\ref{Lem:ueLinfty}.
\end{proof}

\section{Proof of Theorem~\ref{Thm:Main}}\label{Sec:Proof}

We are in a position to
construct global weak solutions of \eqref{Sys:Main}
with the
additional regularity properties \eqref{Main_Regularity} and
the claimed boundedness feature \eqref{Main_Boundedness}.

%
%
\begin{lem}\label{Lem:GWS}
Let $n \in \N$, and let $p > 1$ and $\theta > 1$ be such that
\eqref{Main_Assumption} holds.
Then there exist a sequence $(\ep_j)_{j \in \N} \subset (0,1)$ and functions
$u,v$
such that
$\ep_j \dwto 0$ as $j \to \infty$, that $u,v$ satisfy \eqref{Main_Regularity},
that
\begin{align*}
    &\uej \wsc u \quad\mbox{in}\ L^{\infty}(0, \infty; L^{\infty}(\Omega)),
\\
    &\uej \to u \quad\mbox{a.e.\ in}\ \Omega\times (0, \infty),
\\
    &\uej \to u \quad\mbox{in}\ C^0_{\rm loc}([0, \infty); (W_0^{2,2}(\Omega))^{\ast}),
\\
    &\nabla\uej \wc \nabla u \quad\mbox{in}\ L^2_{\rm loc}([0, \infty); L^2(\Omega)),
\\
    &\vej \wsc v \quad\mbox{in}\ L^{\infty}(0,\infty; L^{\infty}(\Omega)),
\\
    &\vej \to v \quad\mbox{a.e.\ in}\ \Omega\times (0, \infty),
\\
    &\vej \to v \quad\mbox{in}\ C^0_{\rm loc}([0, \infty); (W_0^{2,2}(\Omega))^{\ast}),
\\
    &\nabla \vej \wsc \nabla v \quad\mbox{in}\ L^{\infty}(0, \infty; L^s(\Omega)),
\\
    &\nabla \vej \to \nabla v \quad\mbox{a.e.\ in}\ \Omega\times (0, \infty)
    \quad\mbox{and}
\\
    &(|\nabla\vej|^2 + \ep_j)^{\frac{p-2}{2}}\nabla\vej
    \to |\nabla v|^{p-2}\nabla v
    \quad\mbox{in}\ L^2_{\rm loc}(\Ombar \times [0, \infty))
\end{align*}
as $j\to \infty$, and that $(u,v)$ forms a global weak solution of \eqref{Sys:Main}
in the sense of Definition \ref{df1}.
Moreover, $(u,v)$ is bounded in the sense that \eqref{Main_Boundedness} holds.
\end{lem}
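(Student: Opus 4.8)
The plan is to pass to the limit along a suitable sequence $\ep_j \dwto 0$ in the approximate problems \eqref{Sys:App}, using the $\ep$-independent bounds collected in Section~\ref{Sec:BddApp}. First I would invoke Lemma~\ref{Lem:ueLinfty} (for $n\ge 2$, or for $n=1$ in the range where it applies) together with Lemma~\ref{Lem:1D} (covering the remaining $1$D range $p\ge\min\{2,\frac{2\theta+1}{2\theta-1}\}$) to obtain $\ep$-independent bounds for $\ue$ in $L^\infty((0,\infty)\times\Omega)$ and for $\ve$ in $L^\infty(0,\infty;W^{1,s}(\Omega))$; in particular $\Tmaxe=\infty$ for every $\ep\in(0,1)$. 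Next I would produce compactness: testing the first equation in \eqref{Sys:App} against $\ue$ and using the already-established $L^\infty$ bound on $\ue$ and the $L^s$ bound on $\nabla\ve$ yields, after a Young-inequality absorption, an $\ep$-independent bound for $\nabla\ue$ in $L^2_{\rm loc}([0,\infty);L^2(\Omega))$. From the two equations one then reads off $\ep$-independent bounds for $(\ue)_t$ and $(\ve)_t$ in, say, $L^2_{\rm loc}([0,\infty);(W^{2,2}_0(\Omega))^\ast)$, so that an Aubin--Lions type argument gives a subsequence with $\uej\to u$ and $\vej\to v$ strongly in $L^2_{\rm loc}$ and a.e.\ in $\Omega\times(0,\infty)$, together with the weak(-$\ast$) convergences listed in the statement.

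The one genuinely delicate point is the a.e.\ convergence of the gradients $\nabla\vej\to\nabla v$, which is needed to identify the flux-limited term $(|\nabla\vej|^2+\ep_j)^{\frac{p-2}{2}}\nabla\vej\to|\nabla v|^{p-2}\nabla v$. I would obtain this by a strong-compactness argument for $\nabla\ve$: regularity estimates for the second equation viewed as a linear heat equation with right-hand side $\ue^\theta$ (bounded in $L^\infty$ by the above), e.g.\ parabolic $L^p$ or Schauder estimates localized away from $t=0$, give $\ep$-independent bounds for $\ve$ in better spaces (for instance $\nabla\ve$ bounded in some $C^{\alpha,\alpha/2}_{\rm loc}(\Ombar\times(0,\infty))$, or at least in $W^{1,2}_{\rm loc}$ in both variables), whence $\nabla\vej$ is precompact and, along a further subsequence, converges a.e. Once $\nabla\vej\to\nabla v$ a.e.\ and $\nabla\vej$ is bounded in $L^\infty(0,\infty;L^s(\Omega))$ with $s>2\ge p-1$ choices making the flux term equi-integrable, the Vitali convergence theorem gives the claimed $L^2_{\rm loc}(\Ombar\times[0,\infty))$ convergence of the flux; similarly $\uej|\nabla\vej|^{p-2}\nabla\vej\to u|\nabla v|^{p-2}\nabla v$ and $\uej^\theta\to u^\theta$ in $L^1_{\rm loc}$.

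With all these convergences in hand, passing to the limit in the weak formulation of \eqref{Sys:App} tested against an arbitrary $\varphi\in C_{\rm c}^\infty(\Ombar\times[0,\infty))$ is routine: each term converges to the corresponding term in Definition~\ref{df1}, using $\uie\to u_0$ in $L^\infty(\Omega)$ and $\vie\to v_0$ in $W^{1,s}(\Omega)$ from \eqref{Ass1}--\eqref{Ass3} for the initial-data terms. The regularity properties \eqref{Main_Regularity} follow from lower semicontinuity of norms under weak(-$\ast$) convergence together with the weak-$\ast$ continuity in time, which is inherited from the uniform-in-$\ep$ estimates and the equicontinuity in the dual space $(W^{2,2}_0(\Omega))^\ast$. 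Finally, the boundedness \eqref{Main_Boundedness} is immediate: $\|u(t)\|_{L^\infty(\Omega)}\le\liminf_j\|\uej(t)\|_{L^\infty(\Omega)}$ is bounded by Lemmas~\ref{Lem:ueLinfty} and \ref{Lem:1D}, and $\|v(t)\|_{L^\infty(\Omega)}$ is controlled via $\|v(t)\|_{W^{1,s}(\Omega)}$ and the embedding $W^{1,s}(\Omega)\embd L^\infty(\Omega)$ since $s>n$ by \eqref{Main:s}. I expect the main obstacle to be exactly the strong compactness of $\nabla\ve$: one must be careful that the parabolic regularity estimates used there are $\ep$-independent, which is why they should be applied to the second equation (whose only $\ep$-dependence is through $\ue^\theta$, already uniformly bounded) rather than to the first.
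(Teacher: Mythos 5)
Your proposal is correct and follows essentially the same route as the paper: uniform bounds from Lemmas~\ref{Lem:ueLinfty} and \ref{Lem:1D}, the $L^2_{\rm loc}$ gradient bound (which the paper simply quotes from \eqref{ueLq:grad}), time-equicontinuity in $(W_0^{2,2}(\Omega))^{\ast}$, compactness and a.e.\ convergence of $\nabla\vej$ via parabolic regularity for the second equation, Vitali/dominated convergence to pass to the limit in the flux and production terms, and lower semicontinuity for \eqref{Main_Boundedness}. The paper merely delegates these steps to the cited lemmas of \cite{Ko1, Ko2, YL-2020, KoY1}, whereas you spell them out; the only cosmetic difference is that the paper uses pointwise Lipschitz dual-space estimates with an Arzel\`a--Ascoli argument rather than an Aubin--Lions formulation, which is an equivalent piece of compactness machinery.
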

\begin{proof}
Noting that $\frac{s}{p-1} > n+2 > 2$ and thus the embedding
$W_{0}^{2,2}(\Omega) \embd W^{1, \frac{s}{s-(p-1)}}(\Omega)$
holds,
from \eqref{ueLinfty:sum}, \eqref{1D:sum} as well as
the first and second equations in \eqref{Sys:App}
we proceed similarly as in \cite[Lemma 2.6]{Ko1}
to find constants $c_1, c_2 > 0$ such that
\begin{align}
    &\|\ue(\cdot,t_1)-\ue(\cdot,t_2)\|_{(W_0^{2,2}(\Omega))^{\ast}}
    \le c_1|t_1 - t_2|
    \quad\mbox{and}
    \label{GWS:1}
\\
    &\|\ve(\cdot,t_1)-\ve(\cdot,t_2)\|_{(W_0^{2,2}(\Omega))^{\ast}}
    \le c_2|t_1 - t_2|
    \label{GWS:2}
\end{align}
for all $t_1, t_2 \ge 0$ and $\ep \in (0,1)$.
Hence, arguing similarly as in
\cite[Lemma 2.8]{Ko2} and
\cite[Lemma 3.5]{YL-2020},
we see from \eqref{ueLq:grad}, \eqref{ueLinfty:sum}, \eqref{1D:sum},
\eqref{GWS:1} and \eqref{GWS:2} that
there exist a sequence $(\ep_j)_{j \in \N}\subset (0,1)$ with
$\ep_j \dwto 0$ as $j\to \infty$ and functions $u,v$ which
satisfy \eqref{Main_Regularity} and the claimed convergence properties,
whence together with the assumptions \eqref{Ass1}--\eqref{Ass3}
in Section~\ref{Subsec:local} ensure that $(u,v)$ is a global weak solution of
\eqref{Sys:Main}.
The boundedness property \eqref{Main_Boundedness} can be verified by
an argument identical to that pursued in \cite[Theorem 1.2]{KoY1}.
\end{proof}

\begin{prth1.1}
All statements have been covered by Lemma~\ref{Lem:GWS}.
\qed
\end{prth1.1}

%
\smallskip
\section*{Acknowledgments}
The author thanks Professor Tomomi Yokota
for his encouragement and comments on the manuscript.


%

\small

\end{document}